\newcommand{\ddate}{July 4 2018}
\date{\ddate}
\newtheorem{dummy}{anything}[section]
\newtheorem{Theorem}[dummy]{Theorem}
\newtheorem{Lemma}[dummy]{Lemma}
\newtheorem{Proposition}[dummy]{Proposition}
\newtheorem{Corollary}[dummy]{Corollary}
\newtheorem{Example}[dummy]{Example}
\newtheorem{Examples}[dummy]{Examples}
\newtheorem{Remark}[dummy]{Remark}
\newtheorem{Remarks}[dummy]{Remarks}
\newtheorem{Problem}[dummy]{Problem}
\newtheorem{Question}[dummy]{Question}
\newtheorem{Conjecture}[dummy]{Conjecture}
\newtheorem{ccote}[dummy]{}
\newtheorem*{thmA}{Theorem A}
\newtheorem*{thmB}{Theorem B}
\newtheorem*{thmC}{Theorem C}
\newtheorem*{thmD}{Theorem D}
\newtheorem*{corE}{Corollary E}
\newcommand{\bbr}{{\mathbb R}}
\newcommand{\bbc}{{\mathbb C}}
\newcommand{\bbz}{{\mathbb Z}}
\newcommand{\bbn}{{\mathbb N}}
\newcommand{\cala}{{\mathcal A}}
\newcommand{\calb}{{\mathcal B}}
\newcommand{\calc}{{\mathcal C}}
\newcommand{\cald}{{\mathcal D}}
\newcommand{\cali}{{\mathcal I}}
\newcommand{\calm}{{\mathcal M}}
\newcommand{\caln}{{\mathcal N}}
\newcommand{\calt}{{\mathcal T}}
\newcommand{\mancqfd}{\hfill \ensuremath{\Box}}
\newcommand{\pcirc}{\kern .7pt {\scriptstyle \circ} \kern 1pt}
\newcommand{\tpcirc}{\kern .7pt {\scriptstyle \tilde\circ} \kern 1pt}
\newcommand{\mun}{{-1}}
\newcommand{\scr}{\scriptscriptstyle}
\renewcommand{\:}{\colon}
\newcommand{\beq}[1]{\begin{equation}\label{#1}}
\newcommand{\eeq}{\end{equation}}
\newcommand{\sk}[1]{\vskip #1 mm}
\newcommand{\eqref}[1]{(\ref{#1})}
\newcommand{\hfl}[2]{\smash{\mathop{\hbox to 1 truecm{\kern %
3pt\rightarrowfill\kern 3pt}}%
\limits^{\scriptstyle#1}_{\scriptstyle#2}}}
\newcommand{\cqfd}{\unskip\kern 6pt\penalty 500%
\raise -2pt\hbox{\vrule\vbox to10pt{\hrule width %
4pt\vfill\hrule}\vrule}\smallskip}
\newcommand{\proref}[1]{Proposition~\ref{#1}}
\newcommand{\remref}[1]{Remark~\ref{#1}}
\newcommand{\lemref}[1]{Lemma~\ref{#1}}
\newcommand{\corref}[1]{Corollary~\ref{#1}}
\newcommand{\thref}[1]{Theorem~\ref{#1}}
\newcommand{\exref}[1]{Example~\ref{#1}}
\newcommand{\secref}[1]{Section~\ref{#1}}
\newcommand{\dfn}[1]{{\it #1}}
\newcommand{\dia}[1]{\begin{array}{c}{\xymatrix@C-3pt@M+2pt@R-4pt{#1 }}\end{array}}
\newcommand{\fl}[1]{\buildrel{#1}\over{\longrightarrow}}
\newcommand{\di}{\approx_{\rm diff}}
\newcommand{\ho}{\approx_{\rm top}}
\newcommand{\sdi}{\approx_{\,{\bbr}\!\mbox{-}\rm diff}}
\newcommand{\kdi}{\approx_{\,P\!\mbox{-}\rm diff}}
\newcommand{\sundi}{\approx_{\,S^1\!\mbox{-}\rm diff}}
\newcommand{\rel}{{\rm rel\,}}
\newcommand{\wh}{{\rm Wh}}
\newcommand{\cob}{{\rm Cob}}
\newcommand{\bbun}{{\mathbf 1}}
\newcommand{\rp}[1]{\bbr P^{#1}}
\newcommand{\sdiff}{{\rm Diff}_{\bbr}}
\newcommand{\aut }{{\rm Diff}}
\newcommand{\Di }{{\rm Diff}}
\newcommand{\id}{{\rm id}}
\newcommand{\dcup}{\amalg}
\title{A simplification problem in manifold theory}
\author{Jean-Claude HAUSMANN and Bj\o rn JAHREN}
\begin{document}
\maketitle

\begin{abstract}Two smooth manifolds M and N are called $\bbr$-diffeomorphic if $M\times\bbr$ is
diffeomorphic to $N\times\bbr$. We consider the following simplification problem:
{\it does $\bbr$-diffeomorphism imply diffeomorphism or homeomorphism?} For compact manifolds, 
analysis of this problem relies on some of the main achievements
of the theory of manifolds, in particular the h- and s-cobordism theorems
in high dimensions and the spectacular more recent classification results
in dimensions 3 and 4. This paper presents what is currently known about
the subject as well as some new results about classifications
of $\bbr$-diffeomorphisms. 
\end{abstract}

{\small
\tableofcontents
}

\section{Introduction}

Let $X$ and $Y$ be smooth manifolds.
We write $Y\di X$ when $Y$ is diffeomorphic to $X$ and  $Y\ho X$ when $Y$ is homeomorphic to $X$.
Given a manifold $P$, $Y$ and $X$ are called \dfn{$P$-diffeomorphic} (notation: $Y \kdi X$) if there exists 
a diffeomorphism $f\: Y\times P \to X\times P$, and such an $f$ is called a \dfn{$P$-diffeomorphism}. 
Consider the following {\it simplification problem}.

\sk{1}\noindent
{\bf The $P$-Simplification Problem. \ } \em For smooth closed manifolds, does $P$-diffeomorphism imply diffeomorphism, or homeomorphism? \rm
\sk{1}

The first part of this paper is a survey on what is currently known about the $\bbr$-simplification problem
(other cases are briefly discussed in \secref{S.Rem}).
This quite natural question, expressed in very elementary terms, happens to be
closely related to the theory of invertible 
cobordisms (see e.\,g. \cite{Stallings, JaKwaRP} and \proref{P.invcob}).
As advertisement, here are some samples of the main results of the theory.

\begin{thmA}
Let $M$ and $N$ be smooth closed manifolds of dimension $n$. Suppose that $M$ is simply connected.
Then
\begin{itemize}
\item[(i)]  $N\sdi M \ \Longrightarrow \ N\ho M$.
\item[(ii)]  $N\sdi M \ \Longrightarrow \ N\di M$ if $n\neq 4$.
\end{itemize}
\end{thmA}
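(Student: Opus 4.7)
The plan is to convert the $\bbr$-diffeomorphism into an invertible cobordism and then invoke the h-cobordism theorem in the appropriate category and dimension.

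The first step, handled by the forthcoming \proref{P.invcob}, extracts from a diffeomorphism $f : M \times \bbr \to N \times \bbr$ a compact cobordism $W$ from $M$ to $N$ together with an ``inverse'' cobordism $W'$ from $N$ to $M$ satisfying $W \cup_N W' \cong M \times I$ and $W' \cup_M W \cong N \times I$. Concretely, after arranging that $f$ preserves ends, $W$ is the compact slab of $M \times \bbr$ bounded by $M \times \{0\}$ and $f^{-1}(N \times \{s\})$ for $s \gg 0$, and $W'$ is the analogous slab of $N \times \bbr$ bounded by $N \times \{0\}$ and $f(M \times \{0\})$. The invertibility identities, combined with the simple connectivity of $M$, force $W$ to be an h-cobordism: the inclusion $M \hookrightarrow W$ retracts via $W \hookrightarrow W \cup_N W' \cong M \times I \to M$, and a standard Mayer--Vietoris comparison of the two fillings promotes this to a homotopy equivalence (and analogously for $N \hookrightarrow W$). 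In particular $N$ is also simply connected.

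With the h-cobordism $W$ in hand, apply the appropriate h-cobordism theorem. For $n \geq 5$, $\dim W = n+1 \geq 6$ and Smale's smooth simply connected h-cobordism theorem gives $W \cong M \times I$ smoothly, whence $N \di M$; this proves (ii) and, a fortiori, (i). For $n = 4$, $\dim W = 5$ and Freedman's topological h-cobordism theorem gives $W \ho M \times I$, hence $N \ho M$, proving (i). For $n \leq 3$ no cobordism machinery is needed: a simply connected closed $n$-manifold is a point, $S^2$, or $S^3$ (the last by Perelman), the case $n = 1$ being vacuous; since $N$ is simply connected of the same dimension, $N \di M$.

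The main obstacle is the smooth case in dimension $4$. Freedman's theorem delivers only a homeomorphism, and Donaldson's gauge theory produces simply connected smooth $4$-manifolds that are $h$-cobordant but not diffeomorphic, so no smooth upgrade of Freedman's theorem can hold in general; for $M = S^4$ in particular, a smooth conclusion would settle the smooth $4$-dimensional Poincar\'e conjecture, which remains open. This is precisely why (ii) must exclude $n = 4$.
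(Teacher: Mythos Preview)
Your proposal is correct and follows essentially the same route as the paper: convert the $\bbr$-diffeomorphism into an invertible cobordism (\proref{P.invcob}), observe that invertible cobordisms are h-cobordisms (\proref{P.ihcob}), and then invoke the appropriate h-cobordism theorem dimension by dimension.

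Two small remarks. First, your justification that $W$ is an h-cobordism via a ``Mayer--Vietoris comparison'' is vaguer than necessary; the paper's argument is cleaner: from $M\subset W\subset W\pcirc W'\simeq M$ and $W\subset W\pcirc W'\subset W\pcirc W'\pcirc W\simeq W$ one sees directly that $M$ and $W$ are mutual homotopy retracts, hence homotopy equivalent, with no need for simple connectivity at this step. Second, for $n\leq 3$ the paper actually proves the stronger Theorem~C (no simple-connectivity hypothesis, orientable in dimension~3) via Turaev's theorem plus geometrization; your shortcut through the classification of simply connected closed manifolds in low dimensions is perfectly adequate for Theorem~A as stated and is arguably more direct for that purpose.
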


The simplicity of the statement of Theorem~A, with almost no dimension restriction, contrasts with
the variety of techniques involved in the proof. Actually, Theorem~A concentrates 
a good deal of important developments in differential topology during the 20th century (see also Section~\ref{hinote}). 

When $M$ is not simply connected, part~(i) of Theorem~A is false in general,
The first counterexample was essentially given by Milnor in a famous paper 
in 1961 \cite{MilnorTwoCom}  (see \exref{IneqWh}.(1)). 
Using a recent result of Jahren-Kwasik \cite[Theorem~1.2]{JaKwa}, we 
now  know that  part~(i) is, in general, ``infinitely false'', i.e.
there are manifolds having countably many homeomorphism classes within their 
$\bbr$-diffeomorphism class (see \exref{IneqWh}.(5)).

In dimension 4, part~(ii) of Theorem~A is infinitely false in general, even when $M$ is
simply connected. Indeed, there may be a countable infinity of diffeomorphism classes of manifolds
within the homeomorphism class of M, for instance  when $M=\bbc P^2 \,\sharp\,k\,\overline{\bbc P^2}$.
the connected sum of the complex projective space $\bbc P^2$ and $k$ copies of $\bbc P^2$ with 
reversed orientation, $k\geq 6$ \cite{FinStern}. Each such diffeomorphism class provides a 
counterexample of part~(ii) of Theorem~A, thanks to the following result (probably known by specialists). 

\begin{thmB}
Let $M$ and $N$ be smooth closed  manifolds of dimension 4 which are 
homeomorphic.  Suppose that $H_1(M,\bbz_2)=0$. Then $N\sdi M$.
\end{thmB}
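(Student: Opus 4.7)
The plan is to apply Kirby--Siebenmann smoothing theory to the topological $5$-manifold $M\times\bbr$. Given a homeomorphism $h\: M\to N$, the map $H = h\times\id_{\bbr}\: M\times\bbr \to N\times\bbr$ is a homeomorphism of topological $5$-manifolds. Pulling back the product smooth structure of $N\times\bbr$ through $H$ equips $M\times\bbr$ with a second smooth structure $\Sigma'$, in addition to its original product structure $\Sigma$. To conclude $N\sdi M$ it suffices to produce a diffeomorphism $(M\times\bbr,\Sigma)\di (M\times\bbr,\Sigma')$, since the latter is diffeomorphic to $N\times\bbr$ by construction.

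By the Kirby--Siebenmann smoothing theorem, smoothings of a topological manifold $X$ of dimension $\geq 5$ are classified up to concordance, relative to a fixed base smoothing, by $[X,\mathrm{TOP}/O]$. Since $\pi_i(\mathrm{TOP}/O)$ vanishes for $i\leq 6$ except for $\pi_3(\mathrm{TOP}/O)=\bbz_2$, obstruction theory in dimension~$5$ identifies this set with $H^3(X;\bbz_2)$. For $X=M\times\bbr$, homotopy invariance gives $H^3(M\times\bbr;\bbz_2)=H^3(M;\bbz_2)$, and $\bbz_2$-Poincar\'e duality on the closed $4$-manifold~$M$ (valid unconditionally, since every manifold is $\bbz_2$-orientable) identifies this group with $H_1(M;\bbz_2)$. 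The hypothesis $H_1(M;\bbz_2)=0$ therefore forces the obstruction class comparing $\Sigma$ and $\Sigma'$ to vanish, so these two smoothings are concordant.

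To finish, one invokes the fact that in dimension $\geq 5$, concordant smoothings of a topological manifold are isotopic (a standard consequence of the product-structure theorem), and so yield diffeomorphic smooth manifolds. This gives $(M\times\bbr,\Sigma)\di(M\times\bbr,\Sigma')$, and unwinding the construction of $\Sigma'$ turns this into a diffeomorphism $M\times\bbr\di N\times\bbr$. The delicate point is to ensure that the Kirby--Siebenmann apparatus applies cleanly to the non-compact manifold $M\times\bbr$; this is handled by working with smoothings that are standard outside a compact set, a restriction respected by $\Sigma'$ because $\Sigma$ and $\Sigma'$ agree along the $\bbr$-direction at infinity. Once that framing is in place, the whole proof reduces to the single cohomological vanishing $H^3(M;\bbz_2)=0$ granted by the hypothesis.
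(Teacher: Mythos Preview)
Your argument is correct and takes a genuinely different route from the paper's. The paper proceeds via its cobordism framework: it smooths the mapping cylinder $C_f$ of the homeomorphism $f\:M\to N$ (a compact topological $5$-manifold with boundary), using that the Kirby--Siebenmann obstruction ${\rm ks}(C_f,\partial C_f)\in H^4(C_f,\partial C_f;\bbz_2)\approx H^3(M;\bbz_2)$ vanishes, to obtain a smooth s-cobordism between $M$ and $N$; then it invokes Stallings' theorem that h-cobordisms between closed $4$-manifolds are invertible, together with \proref{P.invcob}, to conclude $N\sdi M$. You instead compare two smoothings directly on the open $5$-manifold $M\times\bbr$ and conclude via the Product Structure Theorem. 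Both routes hinge on the same cohomological vanishing $H^3(M;\bbz_2)=0$, but yours bypasses the invertible-cobordism machinery entirely and is arguably more direct; the paper's route, on the other hand, fits into the framework it has been developing and yields the slightly sharper intermediate statement that $M$ and $N$ are smoothly s-cobordant (\corref{C.WallGen}).

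One correction to your final paragraph: your claim that ``$\Sigma$ and $\Sigma'$ agree along the $\bbr$-direction at infinity'' is not right. The pulled-back smoothing $\Sigma'$ differs from $\Sigma$ on every slice $M\times\{t\}$, uniformly in $t$, so the two structures do not agree outside any compact set. Fortunately this concern is unnecessary: the Kirby--Siebenmann classification of smoothings by $[X,{\rm TOP}/O]$ and the concordance-implies-isotopy statement (the Product Structure Theorem) are valid for arbitrary topological manifolds of dimension $\geq 5$, with no compactness hypothesis. So you may simply drop the last two sentences; the argument stands without them.
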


In particular, although it is not known whether 
all differentiable structures on the 4-sphere $S^4$ are diffeomorphic (the smooth,
4-dimensional Poincare conjecture), they would  all be $\bbr$-diffeomorphic. Incidentally,
the possibility of such exotic structures will play a role in some results in
Sections~\ref{S.nfour}, \ref{S.nthree} and~\ref{S.classif}.

Note also that manifolds M and N as in Theorem~B but simply-connected are
homeomorphic if and only they are homotopy equivalent \cite[\S~10.1]{Freedman-Quinn-Book}. 

The hypothesis of simple connectivity in Theorem~A is not necessary in low dimensions.
The following result is classical for $n\leq 2$, follows for $n=3$ from a
result of Turaev \cite{Turaev2} together with the geometrization theorem.

\begin{thmC}
Let $M$ and $N$ be two closed manifolds of dimension $n\leq 3$, which are orientable if $n=3$. 
Then $N\sdi M$ if and only if $N\di M$.
\end{thmC}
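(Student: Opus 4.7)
The direction $N\di M\Rightarrow N\sdi M$ is immediate. For the converse, the cases $n=0$ and $n=1$ are trivial: a closed manifold of dimension $\le 1$ is a finite disjoint union of copies of a single model (a point or $S^1$), and the number of components is an $\bbr$-diffeomorphism invariant, read off directly from the components of $M\times\bbr$. For $n=2$, the deformation retraction $M\times\bbr\to M$ turns $N\sdi M$ into a homotopy equivalence $M\simeq N$. The classification of closed surfaces then yields $N\di M$: orientability is detected by $H_2(-;\bbz)$, and the Euler characteristic determines the diffeomorphism type within each orientation class.

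The substantive case is $n=3$ with $M$ and $N$ orientable. The plan is to move to the cobordism picture and then invoke the deep 3-dimensional classification. First, by \proref{P.invcob}, the hypothesis $N\sdi M$ is equivalent to the existence of an invertible cobordism $W$ between $M$ and $N$. Such a $W$ is in particular an h-cobordism, so $M$ and $N$ are homotopy equivalent through $W$, and the orientations together with the higher structure relevant for Turaev's classification are transported from one boundary component to the other. At this point I would invoke Turaev's theorem \cite{Turaev2}, which---assuming the geometrization conjecture---classifies closed orientable 3-manifolds up to homeomorphism in homotopy-theoretic terms matching exactly the data provided by $W$. Geometrization being now a theorem (Thurston--Perelman), one concludes $N\ho M$, and in dimension~$3$ this gives $N\di M$ by Moise's theorem.

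\textbf{Main obstacle.} The difficulty is concentrated entirely in dimension~$3$. Unlike the surface case, closed $3$-manifolds are not classified up to diffeomorphism by their homotopy type (witness homotopy equivalent but non-homeomorphic lens spaces), and the $s$-cobordism theorem is unavailable in such low dimensions, so no direct cobordism-theoretic reduction can close the argument. The content therefore lies in extracting from the invertible cobordism the precise package of invariants that feeds into Turaev's classification, after which the geometrization theorem supplies the conclusion as a black box.
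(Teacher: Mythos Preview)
Your approach is essentially the paper's: pass from $N\sdi M$ to an invertible cobordism via \proref{P.invcob}, hence an h-cobordism by \proref{P.ihcob}, then invoke geometrization plus Turaev \cite{Turaev2} to obtain a homeomorphism, and finally upgrade to a diffeomorphism (the paper cites Munkres' smoothing theory \cite{munkres} rather than Moise, but either suffices in dimension~3). Your stated ``main obstacle'' about extracting the right package of invariants dissolves once you note that Turaev's Theorem~1.4 in \cite{Turaev2} takes h-cobordism of geometric $3$-manifolds directly as its hypothesis, so there is nothing further to extract.
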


Theorem~C is currently unknown for non-orientable $3$-manifolds (see \remref{nonorthreemfds}).

Proofs of Theorems A, B and C are given in Sections~4--6 (with more general 
hypotheses for Theorem~A), after important preliminaries in  Sections~2--3.
Of particular importance for the simplification problem are the so-called
{\em inertial} invertible cobordisms, characterized  by the property that the two ends 
are diffeomorphic (homeomorphic). Section 4 also includes some new results 
in this area (notably Proposition \ref{P.sigbar}).

In the last part of this paper (\secref{S.classif}), we present new results on
classification of $\bbr$-diffeomorphisms under several equivalence relations. For instance, 
a diffeomorphism $f\:N\times\bbr\to M\times\bbr$ is called \dfn{decomposable} 
if there exists a diffeomorphism $\varphi\:N\to M$ such that $f$ is isotopic to 
$\varphi\times{\rm \pm id}_\bbr$. 
Fix a manifold $M$ and 
consider pairs $(N,f)$ where $N$ is a smooth closed manifold and 
$f\:N\times\bbr\to M\times\bbr$ is a diffeomorphism. Two such pairs $(N,f)$ and $(\hat N,\hat f)$
are equivalent if $f^\mun\pcirc\hat f$ is decomposable. 
The set of equivalence classes is denoted by $\cald(M)$. We compute this set in all dimensions
in terms of invertible cobordisms. As a consequence, in high dimensions  we get
the following result.

\begin{thmD}
Let $M$ be a closed connected smooth manifold of dimension $n\geq 5$. Then 
$\cald(M)$ is in bijection with the Whitehead group $\wh(\pi_1M)$.
\end{thmD}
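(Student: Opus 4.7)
The plan is to transport the classification of $\cald(M)$ to the theory of invertible cobordisms, and then apply the s-cobordism theorem in high dimensions.

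The first step, valid in all dimensions and corresponding to the general computation announced for \secref{S.classif}, is to establish a natural bijection between $\cald(M)$ and the set $\cali(M)$ of isomorphism classes of invertible cobordisms with $M$ as a distinguished end, where isomorphisms are diffeomorphisms fixing $M$ pointwise. Given $(N,f)\in\cald(M)$, I would assign the cobordism $W(f)$ obtained as the closed region of $M\times\bbr$ bounded by the hypersurface $f(N\times\{0\})$ and a high level $M\times\{c\}$; its boundary $N\sqcup M$ is identified via $f|_{N\times\{0\}}$ and the projection. Invertibility of $W(f)$ is a consequence of the ambient diffeomorphism $N\times\bbr\approx M\times\bbr$, which lets one tile $M\times\bbr$ alternately by copies of $W(f)$ and of a complementary cobordism $W'$, yielding product structures $W(f)\cup_M W'\approx N\times I$ and $W'\cup_N W(f)\approx M\times I$. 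Well-definedness under the decomposable equivalence is immediate: replacing $f$ by $f\pcirc(\varphi\times\id_\bbr)$ only changes the identification of the $N$-boundary by $\varphi$, while $f\pcirc(\id_N\times(-\id_\bbr))$ leaves both the hypersurface $f(N\times\{0\})$ and its identification with $N$ unchanged. The inverse assignment reconstructs the $\bbr$-diffeomorphism by an infinite-stacking argument, gluing countably many copies of $W$ and $W'$ end to end to recover $N\times\bbr\approx M\times\bbr$.

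The second step uses the hypothesis $n\geq 5$. Any invertible cobordism $W$ is automatically an h-cobordism, since the existence of an inverse provides deformation retractions of $W$ onto either of its ends. In dimension $n+1\geq 6$ the s-cobordism theorem then asserts that the Whitehead torsion $\tau(W,M)\in\wh(\pi_1 M)$ is a complete isomorphism invariant of $W$ rel $M$, and every element of $\wh(\pi_1 M)$ is realized as the torsion of some h-cobordism on $M$, which in this dimension range is automatically invertible (its inverse being the h-cobordism whose torsion is determined by the standard involution of $\wh$). Combined with the bijection $\cald(M)\cong\cali(M)$ of the first step, this yields $\cald(M)\cong\wh(\pi_1 M)$.

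The principal technical difficulty lies in the first step: carefully matching the decomposable equivalence, and in particular the $\pm\id_\bbr$ factor, with isomorphism rel $M$ of invertible cobordisms, and verifying that the infinite-stacking construction assembles into a genuine $\bbr$-diffeomorphism (extending over both ends of $\bbr$) rather than only over a half-line. Once the bijection with $\cali(M)$ is in place, the passage to $\wh(\pi_1 M)$ via the s-cobordism theorem is standard.
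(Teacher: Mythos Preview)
Your proposal is correct and follows the paper's strategy: establish a bijection $\cald(M)\cong\calb(M)$ (your $\cali(M)$) by extracting an invertible cobordism from an $\bbr$-diffeomorphism and reconstructing via infinite stacking, then invoke the s-cobordism classification $\calb(M)\cong\wh(\pi_1 M)$ for $n\geq 5$ (\thref{T.hcob}). The paper organizes the first step through a finer intermediate bijection $\pi_0(\sdiff^+(M,N))\cong\cob^*(M,N)$ (\thref{T.ic0}) before quotienting by $\aut(N)$; the injectivity you correctly flag as the principal difficulty is proved there using the isotopy extension theorem together with uniqueness of tubular neighborhoods of $N\times\{0\}$ in $N\times\bbr$, and well-definedness also requires isotopy invariance (not just invariance under $\varphi\times(\pm\id_\bbr)$), again via isotopy extension.
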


\begin{corE}
Let $M$ be a closed connected smooth manifold of dimension $n\geq 5$. The following assertions are equivalent.
\begin{itemize}
\item[(i)]   $\wh(\pi_1M)=0$. 
\item[(ii)]  Any diffeomorphism $f\:N\times\bbr\to M\times\bbr$ is decomposable. 
\end{itemize}
\end{corE}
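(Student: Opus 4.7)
The plan is to derive Corollary E as a nearly immediate consequence of Theorem D, by a cardinality argument after unpacking the definition of $\cald(M)$.

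First I would observe that the class $[M,\mathrm{id}_{M\times\bbr}]$ is a natural distinguished element of $\cald(M)$, and that by the definition of the equivalence relation, a pair $(N,f)$ is equivalent to $(M,\mathrm{id}_{M\times\bbr})$ iff $f^\mun\pcirc\mathrm{id}=f^\mun$ is decomposable. Now decomposability is closed under inversion: if $f$ is isotopic to $\varphi\times(\pm\mathrm{id}_\bbr)$ then $f^\mun$ is isotopic to $\varphi^\mun\times(\pm\mathrm{id}_\bbr)$. Hence $[N,f]=[M,\mathrm{id}]$ iff $f$ itself is decomposable (which in particular forces $N\di M$, matching the definition of decomposable). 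Consequently, assertion (ii) of Corollary E — that every diffeomorphism $f\:N\times\bbr\to M\times\bbr$ is decomposable — is equivalent to the statement that every class in $\cald(M)$ equals $[M,\mathrm{id}]$, i.e., $\cald(M)$ is a singleton.

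Next, Theorem D provides a bijection $\cald(M)\leftrightarrow\wh(\pi_1M)$, so $\cald(M)$ is a singleton iff $\wh(\pi_1M)=0$. Chaining the two equivalences gives (i)$\iff$(ii), as desired.

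The only step beyond citing Theorem D is the reformulation of (ii) as a singleton condition on $\cald(M)$, which is routine bookkeeping and is the main (and essentially only) point to check. A pure cardinality argument suffices — no explicit identification of the basepoint under the bijection of Theorem D is required — although one expects $[M,\mathrm{id}]\in\cald(M)$ to correspond to $0\in\wh(\pi_1M)$ in any natural construction of the bijection via invertible cobordisms, which would give a slightly cleaner conceptual statement.
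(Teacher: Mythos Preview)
Your proposal is correct and follows essentially the same route as the paper: the paper derives Corollary~E from Corollary~7.5 (the refinement of Theorem~D asserting that $T(N,f)=0$ iff $f$ is decomposable), which amounts precisely to your observation that $[N,f]=[M,\id]$ iff $f$ is decomposable, together with the bijection $\cald(M)\approx\wh(M)$. Your cardinality phrasing is a harmless variant; as you suspect, the paper does identify the basepoint $[M,\id]$ with $0\in\wh(M)$.
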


Theorem D is actually a consequence of a more categorical statement 
(Theorem \ref{T.ic0}), which is of independent interest.

We also consider a quotient $\cald_c(M)$ of $\cald(M)$ where isotopy is replaced by concordance.
Interesting examples are produced to discuss the principle of {\it concordance implies isotopy}
for $\bbr$-diffeomorphisms.

\sk{3}\noindent{\bf Acknowledgments: }\rm The first author thanks 
Pierre de la Harpe and Claude Weber for useful discussions. 
The second author thanks S\l awomir Kwasik for numerous enlightening 
discussion about material related to this paper. We are also grateful to Matthias Kreck and Raphael Torres 
for useful comments and to the referee for a careful reading of the first version of this paper.

\section{Cobordisms}\label{S.cob}

\begin{ccote}\label{co.preli} Preliminaries. \ \rm
Throughout this paper, we work in the smooth category $\calc^\infty$ of smooth manifolds, 
(possibly with corners: see below) and smooth maps. Our manifolds are not necessarily orientable.

If $X$ is a manifold and $r\in\bbr$, the formula $j_X^r(x)=(x,r)$ defines 
a diffeomorphism $j_X^r\:X\to X\times\{r\}$
or an embedding $j_X^r\:X\to X\times\bbr$, depending on the context.

Let $X$ and $X'$ be manifolds with given submanifolds $Y$ and $Y'$, resp., 
and let $\imath\:Y\fl{\approx}Y'$ be an identification (diffeomorphism),
usually understood from the context.
A map $f\:X\to X'$ is called \dfn{relative $Y$} (notation: $\rel\ Y$) if 
the restriction of $f$ to $Y$ coincides with the identification~$\imath$.
Often, $Y=\partial X$ and $Y'=\partial X'$, 
in which case we say \dfn{relative boundary} (notation: $\rel \partial$).
\end{ccote}

\begin{ccote}\label{Co.cobordism} The cobordism category. \rm \
A \dfn{triad} is a triple $(W,M,N)$ of compact smooth manifolds
such that $\partial W = (M\dcup N) \cup X$ with $X\di \partial M \times I$.
Most often $\partial M$ is empty, in which case $\partial W = M\dcup N$.
Otherwise, $W$ is actually a manifold with {\em corners 
 along $\partial M$ and $\partial N$}, modeled locally on the subset 
$\{(x_1,\dots,x_n)|x_1\geq 0, x_2\geq 0\}$ of $\bbr^n$. Smooth maps
are then always required to preserve the stratification coming from this 
local structure (for a precise exposition of the smooth category with corners, see the appendix of \cite{BorelSerreDouady}).

Let us fix the manifolds $M$ and $N$ (one or both of them could be empty). 
 A \dfn{cobordism} from $M$ to $N$ is a triple 
$(W,j_M,j_N)$, where $W$ is a compact smooth manifold and  
$j_M:M\to\partial W$, $j_N:N\to \partial W$ are 
embeddings such that $(W,j_M(M), j_N(N))$ is a triad. If $M$ and $N$ have
nonempty boundaries, $(W,j_M,j_N)$ will sometimes be called a {\em relative
cobordism}.

By a slight abuse of notation we will also let $ j_M$ denote the embedding $j_M$ 
considered as a map into $W$.

Two cobordisms  $(W,j_M,j_N)$ and $(W',j'_M,j'_N)$ are \dfn{equivalent} if 
there is a diffeomorphism $h\:W\to W'$ such that $ j_M\pcirc h=  j'_M$ and
 $ j_N\pcirc h= j'_N$. The set of equivalence classes of cobordisms from 
$M$ to $N$ is denoted by $\cob(M,N)$.
The equivalence class of $(W,j_M,j_N)$ is denoted by $[W,j_M,j_N]$.  

A triad $(W,M,N)$ determines an obvious cobordism, $(W,\imath_M,\imath_N)$, and
its equivalence class in $\cob(M,N)$ will also be denoted by $[W,M,N]$.
Note that $[W,M,N]=[W',M,N]$ if and only if $W\di W'\ (\rel M\cup N)$.
We shall make no distinction between a triad and the cobordism it determines and
often  write ``a cobordism $(W,M,N)$'' instead of ``a triad $(W,M,N)$''. 
A  triad 
of the form $(M\times I,M\times \{0\},M\times \{1\})=(M\times I,j_M^0, j_N^1)$ 
(using the notations $j_X^r$ from Section~\ref{co.preli}) will be 
called a {\em trivial} cobordism.

We now define a \dfn{composition}
$$
\cob(M,N)\times \cob(N,P) \fl{\pcirc} \cob(M,P) \, .
$$
Let $c\in\cob(M,N)$ and $c'\in\cob(N,P)$, represented by cobordisms 
$(W,j_M,j_N)$ and $(W',j'_N,j'_P)$.
The topological manifold $W\cup_{j'_N\pcirc j_N^\mun}W'$ admits a smooth structure 
compatible with those on $W$ and $W'$
\cite[Theorem~1.4]{MilnorHCob}. Such a smooth structure is unique up to 
diffeomorphism relative boundary
(see also \cite[Chapter~8, \S\,2]{Hirsch}).
Choosing one of these smooth structures gives rise to a smooth manifold 
$W\pcirc W'$, and $(W\pcirc W',j_M, j'_P)$  
represents a well-defined class $c\pcirc c'\in\cob(M,P)$. With this composition, 
one gets a category $\cob$ whose objects are closed smooth manifolds and whose set 
of morphisms from $M$ to $N$ is $\cob(M,N)$.  The identity at the object $M$ is 
represented by the trivial cobordism:
$$
\bbun_M=[M\times I,M\times \{0\},M\times \{1\}]=[M\times I,j_M^0, j_N^1] \, .
$$ 

 Note that, by construction,  the composition 
$\bbun_M\pcirc(W, j_M, j_N)\pcirc\bbun_N$ has the form of a triad $(W',M,N)$, 
where we identify $M$ and $N$ with $M\times \{0\}$ and $N\times \{1\}$.
In other words: up to equivalence, cobordisms can always be represented by 
triads.  This will sometimes be exploited in proofs, in order to simplify
notation.  But in general it is helpful  to have  the extra flexibility of the
more general definition, as it makes it easier to keep track of how we 
identify $M$ and $N$ with submanifolds of $\partial W$. A trivial example is
$\bbun_M$, which as a cobordism goes from $M$ to itself, but in a triad the 
two ends can not be the same manifold.  More examples are 
the definition of mapping cylinders and \lemref{L.mapscob1} below.

Our definition of the cobordism category is a condensed reformulation of 
\cite[\S~1]{MilnorHCob},
with end-identifications going in reverse directions.
\end{ccote}

\begin{ccote} Duals and mapping cylinders.\rm \
The order of $M$ and $N$ in $(W,j_M, j_N)$ reflects the categorical intuition
 that $W$ is a cobordism {\em from $M$ to $N$}.  Reversing the order of 
$M$ and $N$, we obtain the 
 \dfn{dual cobordism} $(\bar W,j_N, j_M)$, where $\bar W$ is just a copy of $W$.
If the cobordism is given by a triad $(W,M,N)$, its dual is given by $(\bar W,N,M)$.
The correspondence $[W]\to [\bar W]$ defines a functor $\cob\to\cob^{\rm op}$ which is an isomorphism of categories.

Examples of cobordisms are given by mapping cylinders of diffeomorphisms. 
Let $f\:M\to N$ be a diffeomorphism between smooth closed manifolds. The mapping cylinder $C_f$ of $f$ is defined
by
\beq{defCf}
C_f = \big\{M\times [0,1)\big\}  \cup \big\{N \times (0,1]\big\} \big/ \{(x,t)\sim (f(x),t) 
\hbox{ for all } (x,t)\in M\times (0,1)\} \, . 
\eeq
Note the obvious homeomorphism 
\beq{defCfTop}
C_f\ho \big\{M\times I \cup N\big\} \big/\{(x,1)\sim f(x)\} \, .
\eeq
The latter is the usual definition of the mapping cylinder valid for any continuous map~$f$.
But, when $f$ is a diffeomorphism, 
Definition~\eqref{defCf} makes $C_f$ a smooth manifold with boundary
$\partial C_f=M\times\{0\}\cup N\times\{1\}$. We thus get a cobordism 
$(C_f,j_M^0, j_N^1)$.

\begin{Lemma}\label{L.mapscob1}
For a diffeomorphism $f\:M\to N$ between smooth closed manifolds, the equalities
\beq{L.mapscob1-eq}
[C_f,j_M^0, j_N^1] = [M\times I,j_M^0, j_M^1\pcirc f^\mun] = 
[N\times I,j_N^0\pcirc f , j_N^1]
\eeq
hold in $\cob(M,N)$.
\end{Lemma}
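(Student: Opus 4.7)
The plan is to construct explicit diffeomorphisms witnessing both equivalences, by exploiting the two-chart description of $C_f$ built into its definition~\eqref{defCf}.

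For the first equality, I would define a map $\varphi\:M\times I\to C_f$ by the formula
\[
\varphi(x,t)=\begin{cases} [(x,t)]\in M\times[0,1)\subset C_f & \text{if } t\in[0,1),\\ [(f(x),t)]\in N\times(0,1]\subset C_f & \text{if } t\in(0,1].\end{cases}
\]
On the overlap $t\in(0,1)$ the two definitions agree by the identification $(x,t)\sim(f(x),t)$, so $\varphi$ is well-defined, and it is smooth because it is built from the two charts of $C_f$. It is a bijection since every point of $C_f$ is represented in one of the two half-open cylinders. The inverse is produced in the same way (using $f^{-1}$ on the $N$-chart), hence $\varphi$ is a diffeomorphism. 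At $t=0$ one reads $\varphi\pcirc j_M^0(x)=[(x,0)]=j_M^0(x)$ in $C_f$, and at $t=1$ one reads $\varphi\pcirc(j_M^1\pcirc f^\mun)(y)=\varphi(f^\mun(y),1)=[(f(f^\mun(y)),1)]=[(y,1)]=j_N^1(y)$. This is exactly the data of an equivalence of cobordisms $(M\times I,j_M^0,j_M^1\pcirc f^\mun)\to(C_f,j_M^0,j_N^1)$.

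For the second equality I would do the mirror-image construction: define $\psi\:N\times I\to C_f$ by $\psi(y,t)=[(f^\mun(y),t)]$ for $t\in[0,1)$ and $\psi(y,t)=[(y,t)]$ for $t\in(0,1]$. The same argument shows $\psi$ is a diffeomorphism, and one checks that $\psi\pcirc(j_N^0\pcirc f)=j_M^0$ and $\psi\pcirc j_N^1=j_N^1$. Alternatively, once the first equality is established, the second follows formally by applying it to the diffeomorphism $f^\mun\:N\to M$ together with a canonical identification $C_{f^\mun}\cong\overline{C_f}$ and the functoriality of duals, but writing out $\psi$ directly is shorter.

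The only real subtlety is smoothness, and it is built into the definition: the smooth structure on $C_f$ is precisely the one for which $M\times[0,1)\hookrightarrow C_f$ and $N\times(0,1]\hookrightarrow C_f$ are open smooth embeddings, with transition $(x,t)\mapsto(f(x),t)$ on the overlap. Since $f$ is a diffeomorphism, the maps $\varphi$ and $\psi$ above are smooth in each chart and match on the overlap, so no further verification (e.g.\ at the corner-free ends $t=0,1$) is needed. Thus the expected obstacle—checking that nothing goes wrong at the seam—dissolves into a tautology once the definitions are unwound.
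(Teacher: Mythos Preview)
Your proof is correct and is essentially the same as the paper's. The paper writes down the inverse of your map $\varphi$, namely the correspondence $C_f\to M\times I$ given by $(x,t)\mapsto(x,t)$ on the $M$-chart and $(y,t)\mapsto(f^{-1}(y),t)$ on the $N$-chart, and leaves the boundary checks and the second equality to the reader; you have simply filled in those details and oriented the diffeomorphism the other way.
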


\begin{proof}
 One checks that the correspondences
\beq{E01.mapscob}
\left\{
\begin{array}{rcl} 
M\times [0,1) \ni (x,t) & \mapsto &  (x,t) \\
N\times (0,1] \ni (y,t) & \mapsto &  (f^\mun(y),t)  \ .
\end{array}
\right.
\eeq
provide the first equality. The second one is obtained similarly. 
\end{proof}

\begin{Example}\label{ex.conc}\rm Let $f:M\to M$ be a self-diffeomorphism of a closed manifold $M$. 
Then $C_f$ is equivalent to $\bbun_M$ if and only if there is a diffeomorphism
$F:M\times I\to M\times I$ such that $F(x,0)=f(x)$ and $F(x,1)=x$, i.\,e.
$F$ is {\rm concordant} to $\id_M$.
\end{Example}

The proof of the following lemma is left to the reader (compare \cite[Theorems~1.6]{MilnorHCob}).

\begin{Lemma}\label{L.mapcyl} Let $M\fl{f} N \fl{g} P$ be diffeomorphisms between smooth manifolds. Then 
$[C_{g\pcirc f}]= [C_f] \pcirc  [C_g]$.  \mancqfd
\end{Lemma}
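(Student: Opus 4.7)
The plan is to reduce the lemma to the two representations of $[C_f]$ and $[C_g]$ already provided by Lemma~\ref{L.mapscob1}, so that the composition becomes a straightforward gluing of product cobordisms.

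First I would choose compatible representatives: use the third expression in \eqref{L.mapscob1-eq} for $[C_f]$ and the second expression for $[C_g]$, i.e.\
\[
[C_f]=[N\times I,\; j_N^0\pcirc f,\; j_N^1]\quad\text{in }\cob(M,N),
\]
\[
[C_g]=[N\times I,\; j_N^0,\; j_N^1\pcirc g^\mun]\quad\text{in }\cob(N,P).
\]
Both representatives carry $N\times I$ as underlying manifold, and their $N$-ends match up by the identity: the right-hand $N$-end of $[C_f]$ is $j_N^1$, and the left-hand $N$-end of $[C_g]$ is $j_N^0$, so the gluing map $j_N^0\pcirc(j_N^1)^{\mun}$ is just $(n,1)\mapsto(n,0)$.

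Next I would carry out the composition explicitly. Gluing the two copies of $N\times I$ along $N\times\{1\}\sim N\times\{0\}$ and reparametrizing yields a manifold canonically diffeomorphic to $N\times I$ relative to $N\times\{0\}$ and $N\times\{1\}$; this is the standard smoothing (cf.\ \cite[Theorem~1.4]{MilnorHCob}) and is the point where one has to be mildly careful, but it is routine. Under this identification the $M$-end becomes $j_N^0\pcirc f$ and the $P$-end becomes $j_N^1\pcirc g^\mun$, so
\[
[C_f]\pcirc[C_g]=[N\times I,\; j_N^0\pcirc f,\; j_N^1\pcirc g^\mun]\quad\text{in }\cob(M,P).
\]

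Finally, I would exhibit the diffeomorphism $\Phi\:N\times I\to M\times I$ given by $\Phi(n,t)=(f^\mun(n),t)$. A direct check shows $\Phi\pcirc j_N^0\pcirc f=j_M^0$ and $\Phi\pcirc j_N^1\pcirc g^\mun=j_M^1\pcirc(g\pcirc f)^\mun$, so
\[
[N\times I,\; j_N^0\pcirc f,\; j_N^1\pcirc g^\mun]=[M\times I,\; j_M^0,\; j_M^1\pcirc(g\pcirc f)^\mun],
\]
and this last class equals $[C_{g\pcirc f}]$ by Lemma~\ref{L.mapscob1} applied to the composite $g\pcirc f$. The only substantive point in the argument is the canonical smoothing of $N\times I\cup N\times I$ into $N\times I$, which is precisely what makes composition in $\cob$ well defined, so no real difficulty is expected.
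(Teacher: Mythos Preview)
Your argument is correct. The paper itself does not give a proof of this lemma: it is stated with a \mancqfd\ and the preceding sentence says the proof is left to the reader, with a pointer to \cite[Theorems~1.6]{MilnorHCob}. Your approach via the three equivalent descriptions in \lemref{L.mapscob1} is exactly the natural way to fill this in, and all the verifications check out (in particular the computations with $\Phi$ are fine). The only comment is that the ``mildly careful'' smoothing step is even more routine than you suggest: since the gluing map is literally the identity on $N$, the composite $N\times[0,1]\cup_{\id}N\times[0,1]$ is canonically $N\times[0,2]$, and a linear reparametrization of the interval suffices.
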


\noindent {\em Remark.} The reason for the contravariant form of this identity 
is that we write
composition of cobordisms ``from left to right''. This is the usual convention
in cobordism categories, like path categories (e.\,g. fundamental groupoid)
and topological field theories. 
\end{ccote}

\section{Invertible cobordisms}\label{S.icob}

\begin{ccote}\label{invCob} The category of invertible cobordisms. \rm \  
A cobordism $(W,j_M,j_N)$ is called \dfn{invertible} if $[W]$ is an invertible morphism in $\cob$,
i.\,e. there is a cobordism 
$(W^\mun,j_N,j_M)$ such that $[W]\pcirc [W^\mun] = \bbun_M$ and $[W^\mun]\pcirc [W] = \bbun_N$.

As usual, these conditions uniquely determine $[W^\mun]$ if it exists. 
Two smooth manifolds are \dfn{invertibly cobordant} if there exists an invertible cobordism between them.
Let $\cob^*(M,N)$ be the subset of $\cob(M,N)$ formed by invertible cobordisms. This defines a subcategory
$\cob^*$ of $\cob$, with the same objects.

An example of invertible cobordism is given by the mapping cylinder $C_f$ of a diffeomorphism $f\:N\to M$.
Indeed, \lemref{L.mapcyl} together with \lemref{L.mapscob1} imply that 
$[C_f]^\mun = [C_{f^\mun}]= [\overline{C_f}]$.
\end{ccote}

\begin{ccote} Invertible cobordisms and $\bbr$-diffeomorphisms. \rm \ 
From now on until Section~7 we will be mainly concerned with cobordisms 
between {\em closed} manifolds, unless explicitly stated.  The main
exceptions are the discussions of h-cobordism and Whitehead torsion in
Sections~\ref{Co.hcob} and \ref{Co.tau} and of concordance in 
Section~\ref{concordance}.

Here is one of the main results of this section.

\begin{Proposition}\label{P.invcob}
Let $M$ and $N$ be smooth closed manifolds.
The following statements are equivalent.
\begin{itemize}
\item[(a)] $N\sdi M$.
\item[(b)] $N$ and $M$ are invertibly cobordant.
\item[(c)] There is a diffeomorphism $\beta\: N\times S^1 \to M\times S^1$ such that the composed homomorphism
{\small
\beq{comptriv}
\dia{
\pi_1(N\times pt) \ar[r] &  \pi_1( N\times S^1)  \ar[r]^{\beta_*} & \pi_1( M\times S^1) \ar[r]^(.6){proj} & \pi_1(S^1)
}
\eeq
}
is trivial.
\item[(d)] There is a diffeomorphism $\beta\: N\times S^1 \to M\times S^1$ such that the diagram
\beq{piUnComm}
\dia{
\pi_1( N\times S^1)  \ar[rr]^{\beta_*} \ar[rd]  && 
\pi_1( M\times S^1) \ar[ld] \\
& \pi_1(S^1)
}
\eeq
commutes, where the arrows to $\pi_1(S^1)$ are induced by the projections onto~$S^1$
\end{itemize}
\end{Proposition}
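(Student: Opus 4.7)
My plan is to verify the cycle $(a)\Rightarrow(b)\Rightarrow(d)\Rightarrow(c)\Rightarrow(a)$. The implication $(d)\Rightarrow(c)$ is essentially a tautology: restricting the commutative triangle of $(d)$ to $\pi_1(N\times pt)\subset\pi_1(N\times S^1)$, the right-hand leg already kills this subgroup because $N\times pt\hookrightarrow N\times S^1\to S^1$ is constant, so the composition appearing in $(c)$ vanishes as well.

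For $(a)\Rightarrow(b)$, given $f\:N\times\bbr\to M\times\bbr$, I would choose $T>0$ so large that $f(N\times\{0\})\subset M\times(-T,T)$ and $f^\mun(M\times\{0\})\subset N\times(-T,T)$ (precomposing $f$ with an orientation-reversal of $\bbr$ if needed so that $M\times\{-T\}$ lies on the $f(N\times(-\infty,0))$ side). The hypersurface $f(N\times\{0\})$ then separates the slab $M\times[-T,T]$ into a compact cobordism $W\in\cob(M,N)$ containing $M\times\{-T\}\di M$ and a second cobordism $V\in\cob(N,M)$ containing $M\times\{T\}\di M$, and by construction $[W]\pcirc[V]=\bbun_M$. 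Running the analogous construction in $N\times\bbr$ with the hypersurface $f^\mun(M\times\{0\})$ and transporting it through $f$ yields $[V]\pcirc[W]=\bbun_N$, so $W$ is invertible.

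For $(b)\Rightarrow(d)$, I would assemble the infinite telescope $V_\infty=\cdots W\pcirc W^\mun\pcirc W\pcirc W^\mun\cdots$ with the obvious $\bbz$-action shifting by one period. Grouping consecutive blocks as $W\pcirc W^\mun\di M\times I$ gives a $\bbz$-equivariant identification $V_\infty\di M\times\bbr$; regrouping them as $W^\mun\pcirc W\di N\times I$ gives $V_\infty\di N\times\bbr$. In both identifications the period shift corresponds to unit translation of $\bbr$, so passing to the $\bbz$-quotient produces $\beta\:N\times S^1\di V_\infty/\bbz\di M\times S^1$. The natural projection $V_\infty\to\bbr/\bbz=S^1$ intertwines both product projections (up to a constant rotation of the target $S^1$, which can be absorbed into $\beta$), hence the triangle in $(d)$ commutes.

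For $(c)\Rightarrow(a)$, let $p_M\:M\times\bbr\to M\times S^1$ and $p_N\:N\times\bbr\to N\times S^1$ be the infinite cyclic covers classified by $\pi_1(M)\subset\pi_1(M\times S^1)$ and $\pi_1(N)\subset\pi_1(N\times S^1)$. The pullback $\beta^*p_M\to N\times S^1$ is classified by $H:=\beta_*^\mun(\pi_1(M))\subset\pi_1(N\times S^1)=\pi_1(N)\times\bbz$. Hypothesis $(c)$ gives $\pi_1(N)\subset H$, so $H$ has the form $\pi_1(N)\times k\bbz$ for some $k\geq 0$; since $\beta_*$ induces an isomorphism $\pi_1(N\times S^1)/H\cong\pi_1(M\times S^1)/\pi_1(M)\cong\bbz$, the quotient $\bbz/k\bbz$ must be infinite cyclic, forcing $k=0$. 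Hence $H=\pi_1(N)$, the pullback cover equals $p_N$, and $\beta$ lifts to a diffeomorphism $\tilde\beta\:N\times\bbr\to M\times\bbr$, establishing $(a)$. The main technical obstacle is the infinite telescope step of $(b)\Rightarrow(d)$: one must check that the pairwise smooth gluings recalled in Section~\ref{Co.cobordism} concatenate coherently to a smooth structure on $V_\infty$ and that the two different groupings yield $\bbz$-equivariant trivializations with compatible projections onto $\bbr$. The remaining geometric separation in $(a)\Rightarrow(b)$ and algebraic subgroup lemma in $(c)\Rightarrow(a)$ are routine.
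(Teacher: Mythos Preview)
Your cycle $(d)\Rightarrow(c)$, $(c)\Rightarrow(a)$, and $(b)\Rightarrow(d)$ are all fine. In fact your telescope argument for $(b)\Rightarrow(d)$ is slightly more direct than the paper's: the paper first proves $(b)\Rightarrow(c)$ via a fundamental-domain argument (being explicitly cautious about whether the two $\bbz$-actions coming from the $M$- and $N$-groupings agree), and then separately shows $(c)\Rightarrow(d)$ by adjusting $\beta$ with a reflection of $S^1$. Your observation that the index-shift on $V_\infty$ is \emph{simultaneously} conjugate to unit translation under both $g_M$ and $g_N$ (once these are built from copies of fixed trivializations $g$ and $h$) is correct and short-circuits that detour; the care you flag about coherent gluings is exactly what is needed.

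The real problem is $(a)\Rightarrow(b)$. You correctly obtain $[W]\pcirc[V]=\bbun_M$ from the slab $M\times[-T,T]$, and the analogous construction in $N\times[-T,T]$ gives cobordisms $P\in\cob(N,M)$ and $Q\in\cob(M,N)$ with $[P]\pcirc[Q]=\bbun_N$. But your sentence ``transporting it through $f$ yields $[V]\pcirc[W]=\bbun_N$'' is not justified: the regions $P,Q$ are cut out by the slices $N_{-T}$, $f^{-1}(M_0)$, $N_T$, whereas $V,W$ are cut out by $M_{-T}$, $f(N_0)$, $M_T$, and these are \emph{different} hypersurfaces. Nothing you have written explains why $[P]=[V]$ and $[Q]=[W]$ in $\cob^*$. (It is true, but proving it already requires the left/right-inverse trick below, so the argument becomes circular.) The paper's remedy---and the paper explicitly warns that this implication is delicate, citing an error of Kervaire---is to take one more slice: after $M_r$, $f(N_u)$, $M_s$ choose $v>u$ with $M_s\subset f(N\times(u,v))$, giving a third cobordism $A'$ between $M_s$ and $f(N_v)$. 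Then $[A]\pcirc[B]=\bbun_M$ and $[B]\pcirc[A']=\bbun_N$ (the latter because the region between $f(N_u)$ and $f(N_v)$ is $f(N\times[u,v])$), so $B$ has both a left and a right inverse, forcing $[A]=[A']$ and making $B$ genuinely invertible. Your two-slice argument only ever produces one-sided inverses.
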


\begin{Remark}\label{R.Sunsimpl}\rm
Conditions (c) or (d) are stronger than just $S^1$-diffeomorphism, since there are examples of 
closed manifolds $M$ and $N$ such that $M\sundi N$ but $\pi_1(N)\not\approx\pi_1(M)$
(see e.g. \cite[p.~29]{charlap}, \cite[Theorem~4.1]{ConRay} or \cite[Theorem~2]{KwaRok}). 
Some of these examples are in dimension 3, so crossing with
spheres provide examples in all dimensions greater than four.
\end{Remark}

We write a detailed proof of \proref{P.invcob}, introducing notations which will be useful in 
\secref{S.classif}. Also, proving $(a)\Rightarrow (c)$ is delicate: Kervaire wrote a short argument 
at the end of \cite{KeBMS} but, after publication, thought that his argument was incorrect.
For a proof of $(b)\Rightarrow (c)$ using the deep s-cobordism theorem, when $\dim M \geq 4$, see \remref{MxS1usingscob}.

\begin{proof}[Proof of \proref{P.invcob}] \
{\em (a) implies (b).} \  Let $f\: N\times\bbr \to M\times\bbr$ be a diffeomorphism. 
Write $M_u=M\times\{u\}$, $N_u=N\times\{u\}$ and $N'_u=f(N_u)$. We use the obvious diffeomorphisms
$j_M^u\:M\to M_u$ and $j_N^u\:N\to N_u$ introduced in \secref{co.preli}.

By compactness of $N$, there exists $r<u<s<v$ such that  $N_u'\subset M\times (r,s)$ and $M_s\subset f(N\times (u,v))$
(to get this order, one might have to precompose $f$ by the automorphism $(x,u)\mapsto (x,-u)$ of $N\times\bbr$).
The region $A$ between $M_r$ and $N'_u$ and the region $B$ between $N'_u$ and $M_s$ produce equivalence classes of cobordisms 
$$
[A,j_M^r, f\pcirc j_N^u] \in \cob(M,N) \ , \
[B,f\pcirc j_N^u, j_M^s] \in \cob(N,M) 
$$
obviously satisfying $[A]\pcirc [B]= \bbun_M$. One also has the class of cobordism
$$
[A',j_M^s, f\pcirc j_N^v] \in \cob(M,N) \, .
$$
Using the diffeomorphism $f$, one proves that $[B]\pcirc [A']= \bbun_N$. This implies that $[A']=[A]$ and 
$[B]=[A]^\mun$.

\sk{2}
\noindent {\em (b) implies (a) and (c).} \ We first prove that (b) implies (a), using an argument of Stallings \cite[\S\,2]{Stallings}.
Let $A$ be an invertible cobordism from $M$ to $N$, with inverse~$B$. 
Let $A_i$ and $B_i$ be copies of $A$ and $B$ indexed by $i\in\bbz$. 
Consider the manifold 
\beq{E.invcob}
\begin{array}{rcl}
W &=& \cdots\pcirc\, (A_i\pcirc B_i)\, \pcirc \,(A_{i+1}\pcirc B_{i+1}) \,\pcirc \cdots \\ &=& 
\cdots\pcirc\, (B_i\pcirc A_{i+1})\,\pcirc\,  (B_{i+1}\,\pcirc A_{i+2})        \pcirc\cdots
\end{array}
\eeq

Let $g_i\: M\times [i,i+1] \to A_i\pcirc B_i$ be copies of some diffeomorphism relative boundary $g\: M\times I\to A\pcirc B$.
Then, $g_{\scr M}=\bigcup_{i\in\bbz} g_{i}$ is a diffeomorphism from $M\times\bbr$ onto $W$. 
The same may be done with the second decomposition of $W$. We thus get two
diffeomorphisms  $g_{\scr M}\: M\times\bbr\to W$ and $g_{\scr N}\: N\times\bbr\to W$, which proves (a). 

We now prove that (b) implies (c). By conjugation by $g_M$, the automorphism 
$(x,t)\to (x,t+1)$ of $M\times\bbr$ 
produces an automorphism
$T$ of $W$, generating a free and proper $\bbz$-action on $W$
and a diffeomorphism $\alpha\: W/\bbz \fl{\approx}  M\times S^1$. 
It is not clear whether the corresponding automorphism obtained via $g_{\scr N}$ is conjugate to $T$. 
However, the manifold
$Z_i= B_i\pcirc A_{i+1}$ is a fundamental domain for the $T$-action and the restriction of $T$ to
$Z_i$ sends $Z_i$ onto $Z_{i+1}$ relative boundary. 
Therefore, we get a diffeomorphism
$$
\dia{ 
\beta : N\times S^1 \ar[r]_{\di} &
 N\times(I/\partial I) \ar[r]_(0.63){\di} &  W/\bbz \ar[r]_(.40){\di}^(.40){\alpha}  & M\times S^1  \, .
}
$$
The composed homomorphism \eqref{comptriv} is trivial since the restriction of $\beta$ to $N\times pt$ factors through $M\times\bbr$.

\sk{2}
\noindent {\em (c) implies (d).} \ Using the exact sequence
$$
1 \to \pi_1(N\times pt) \to \pi_1( N\times S^1) \fl{proj} \pi_1(S^1)  \to 1
$$
Condition (d) implies that $proj\pcirc\beta_*$ factors
through an endomorphism $\bar\beta_*$ of $\pi_1(S^1)$ which, being surjective, satisfies $\bar\beta_*(b)=\pm b$
(identifying $\pi_1(S^1)$ with $\bbz$).
The possible negative sign may be avoided by precomposing $\beta$ 
with the automorphism $(x,z)\mapsto (x,\bar z)$ of $N\times S^1$.

\sk{2}
\noindent {\em (d) implies (a).} \  
Let $\beta\: N\times S^1\to M\times S^1$ as in~(d). 
 Consider the pullback diagram
$$
\dia{
P \ar[r]^{\tilde\beta} \ar[d]^{p} &  M\times \bbr \ar[d]^{proj} \\
N\times S^1  \ar[r]^\beta & M\times S^1
}
$$
The map $\tilde\beta$ is a diffeomorphism, since so is $\beta$. 
The covering $p$ corresponds to the homomorphism $proj\pcirc\beta_*\: \pi_1(N\times S^1)\to \pi_1(S^1)$. 
The latter is equal to $proj\:\pi_1(N\times S^1)\to \pi_1(S^1)$ by the commutativity of~\eqref{piUnComm},
implying that $P\di N\times\bbr$.
\end{proof}

Closely related to \proref{P.invcob} is the following result.

\begin{Proposition}\label{P.invcob2}
Let $(W,j_M,j_N)$ be a cobordism between closed manifolds. 
The following five statements are equivalent:
\sk{1}
\begin{minipage}[t]{5.5cm}
\begin{itemize}
\item[(a)] $W$ is invertible.
\item[(b)] $W-j_N(N)\di M\times[0,\infty)$.
\item[(c)] $W-\partial W\di M\times \bbr$.
\end{itemize}
\end{minipage}
\begin{minipage}[t]{7cm}
\begin{itemize}
\item[]
\item[(b')] $W-j_M(M)\di N\times (-\infty,0]$.
\item[(c')] $W-\partial W\di N\times \bbr$.
\end{itemize}
\end{minipage}
\end{Proposition}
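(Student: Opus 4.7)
The five statements arrange symmetrically: conditions $(b')$ and $(c')$ for $W$ are precisely conditions $(b)$ and $(c)$ for the dual cobordism $\bar W$, and $[W]$ is invertible in $\cob^*(M,N)$ if and only if $[\bar W]$ is invertible in $\cob^*(N,M)$. It thus suffices to prove $(a)\Leftrightarrow(b)\Leftrightarrow(c)$; applying this to $\bar W$ yields the remaining equivalences.

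For $(a)\Rightarrow(b)$ I would adapt the Stallings infinite-composition trick from \proref{P.invcob}. Let $W'$ represent the inverse of $[W]$, and form $\tilde W = W \pcirc W' \pcirc W \pcirc W' \pcirc \cdots$. Regrouping as $(W\pcirc W')\pcirc(W\pcirc W')\pcirc\cdots$ and using $W\pcirc W' \di M\times I$ relative boundary gives $\tilde W \di M \times [0,\infty)$; regrouping instead as $W\pcirc (W'\pcirc W)\pcirc(W'\pcirc W)\pcirc\cdots$ and using $W'\pcirc W \di N\times I$ presents $\tilde W$ as $W$ with an infinite open $N$-collar glued along $j_N(N)$, which is diffeomorphic to $W - j_N(N)$ by absorbing the open collar. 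Equating these two descriptions yields $(b)$. The implication $(b)\Rightarrow(c)$ is immediate: any diffeomorphism $\psi\: W - j_N(N) \to M\times[0,\infty)$ restricts on interiors to $W - \partial W \di M\times(0,\infty)\di M\times\bbr$.

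For $(c)\Rightarrow(a)$, starting from $\phi\: M\times\bbr \to W - \partial W$ (arranged, by precomposing with $(x,t)\mapsto(x,-t)$ if needed, so that the $-\infty$ end matches the end of $W-\partial W$ near $j_M(M)$), my plan is to exhibit $W$ as equivalent to a mapping cylinder $C_f$ for some diffeomorphism $f\:M\to N$; this is invertible by \lemref{L.mapcyl} together with the identity $[C_f]^\mun=[C_{f^\mun}]$ noted in \secref{invCob}. Properness of $\phi$ forces $\phi(M\times[T,\infty))\subset c_N(j_N(N)\times(0,\epsilon))$ for $T$ large and $\epsilon$ small. Cut $W$ along the hypersurface $M_T=\phi(M\times\{T\})$ into $W=A_T\pcirc B_T$, where $A_T=\phi(M\times(-\infty,T])\cup j_M(M)$ and $B_T$ is the closure of the complementary piece. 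The left piece $A_T$ is a cobordism from $M$ to $M_T\di M$ diffeomorphic rel boundary to $M\times I$ via a reparametrization of $\phi$, so $[A_T]=\bbun_M$ after identifying $M_T$ with $M$ via $\phi|_{M\times\{T\}}$; hence $[W]=[B_T]$ in $\cob(M,N)$. It remains to show $[B_T]=[C_f]$.

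The main obstacle is this final identification. Two ingredients are needed: first, that $M\di N$, which follows from $(c)$ by uniqueness of smooth completion of the tame $j_N$-end of $W-\partial W$ (realized both as $M\times[T,\infty)$ via $\phi$ and as the punctured collar $j_N(N)\times(0,\epsilon)$ via $c_N$); second, that the hypersurface $M_T$ sitting inside $c_N(j_N(N)\times(0,\epsilon))\di N\times(0,\epsilon)$ can be ambient-isotoped inside the collar to a standard slice $c_N(j_N(N)\times\{s_0\})$. The latter relies on collar uniqueness and isotopy extension, exploiting that $\phi$ equips the end with a global product structure so that $M_T$ becomes transverse to the collar direction after a small perturbation. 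Once $M_T$ is straightened, the region between it and $j_N(N)$ is literally an $N\times I$ cylinder, and reading off the two boundary identifications produces the diffeomorphism $f\:M\to N$ with $[B_T]=[C_f]$, completing the argument.
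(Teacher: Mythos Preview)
Your argument for $(a)\Rightarrow(b)\Rightarrow(c)$ matches the paper's. The problem is $(c)\Rightarrow(a)$: you try to realize $[W]$ as a mapping cylinder $[C_f]$ for some diffeomorphism $f\:M\to N$, and in particular you claim that $(c)$ forces $M\di N$. This is false in general. For instance, \exref{IneqWh}(1) describes an invertible cobordism between $L(7,1)\times S^4$ and $L(7,2)\times S^4$, which are not even homeomorphic; by the very equivalence you are proving, that cobordism satisfies $(c)$. Your ``uniqueness of smooth completion of the tame $j_N$-end'' is where the argument breaks: the end of $W-\partial W$ near $j_N(N)$ has one cylindrical neighborhood $N\times(0,\epsilon)$ coming from the collar and another $M\times(T,\infty)$ coming from $\phi$, but two cylindrical neighborhoods of the same end need not have diffeomorphic cross-sections. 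Put differently, by \exref{L.BMCf} the equation $[W]=[C_f]$ is equivalent to $[W[\,=[M\times I[$ in $\calb(M)$, and this fails precisely for the nontrivial invertible cobordisms that the proposition is meant to cover.

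The paper's route for $(c)\Rightarrow(a)$ never compares $M$ with $N$. Instead it runs the trapping trick from the proof of \proref{P.invcob} one level up: setting $W_r=(M\times[-r,0])\pcirc W\pcirc(N\times[0,r])$ and $V_r=M\times[-r,r]$, one finds $r<s<t$ with $f(W_0)\subset V_r\subset f(W_s)\subset V_t$. The complementary regions yield cobordisms $A,B,C$ on the $M$-side and $X,Y,Z$ on the $N$-side satisfying $B\pcirc A=C\pcirc B=\bbun_M$ and the analogous relations for $X,Y,Z$; the usual left-inverse/right-inverse argument then shows $B$ and $Y$ are invertible, and $[W]=[W_s]=B\pcirc[V_r]\pcirc Y=B\pcirc Y$ is a composite of invertibles.
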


\begin{proof} It clearly suffices to prove this for a triad $(W,M,N)$.
We shall prove that $(a) \Rightarrow (b) \Rightarrow (c) \Rightarrow (a)$. 
The implication chain $(a) \Rightarrow (b') \Rightarrow (c') \Rightarrow (a)$
is obtained similarly.

Suppose that $W$ admits an inverse $W^\mun$.
Let $W_i$ and $W_i^\mun$ be copies of $W$ and $W^\mun$, indexed by $i\in\bbn$.
One has
$$
\begin{array}{rcl}
W-N &\di& W\pcirc N\times [0,\infty) \\  &\di&
W_0 \pcirc W_0^\mun \pcirc W_1 \pcirc W_1^\mun \pcirc W_2 \pcirc\cdots   \\  &\di&
M\times [0,1] \pcirc W_1 \pcirc W_1^\mun \pcirc W_2 \pcirc\cdots   \\  &\di&
M\times [0,\infty) \ ,
\end{array}
$$
thus $(a) \Rightarrow (b)$. 

As $(b) \Rightarrow (c)$ is obvious, it remains to prove $(c) \Rightarrow (a)$. 
For $1\leq r\in\bbn$, let $W_r = (M\times [-r,0])\pcirc W \pcirc (N \times [0,r])$ and $V_r=M\times [-r,r]$.
Let $f\:W-\partial W\to M\times\bbr$ be a diffeomorphism. As $W-\partial W\di\lim_{r\to\infty}W_r$
and $M\times\bbr\di\lim_{r\to\infty}V_r$, there are $1\leq r<s<t$ in $\bbn$ such that
$$
f(W_0) \subset V_r \subset f(W_s) \subset V_t  \, , 
$$
none of these inclusions being an equality.
As in the proof of \proref{P.invcob}, this provides classes $A,B,C,X,Y,Z$ in $\cob(M,M)$
such that $[V_r]=A\pcirc [W_0]\pcirc X$, $[W_s]=B\pcirc [V_r]\pcirc Y$ and $[V_t]=C\pcirc [W_s]\pcirc Z$.
Moreover, $B\pcirc A=[M\times [-s,0]]=\bbun_M$ and $C\pcirc B=[M\times [-t,-s]]=\bbun_M$. Therefore,
$B$ is invertible and $C=A=B^\mun$. In the same way, $Y$ is invertible and $X=Z=Y^\mun$.
Therefore, 
$$
[W]=[W_s] = B\pcirc [V_r]\pcirc Y = B\pcirc \bbun_M \pcirc Y = B\pcirc Y  
$$
and thus $W$ is invertible.
\end{proof}
\end{ccote}

\begin{ccote} The set $\calb(M)$. \ \rm
In view of \proref{P.invcob}, the study of the simplification problem is related to the classification of invertible 
cobordisms. We fix a smooth closed connected manifold $M$ and consider invertible cobordisms starting from $M$.
Two such cobordisms are regarded as equivalent if they are diffeomorphic relative to $M$. 
To be precise: $(W,j_M, j_N)$ is equivalent to $(W',j'_M, j'_N)$ if 
there is a diffeomorphism $f:W\di W'$ such that $j'_M=fj_M$.    
The equivalence class of a cobordism $(W,j_M,j_N)$ does not depend on $j_N$ and
is denoted by $[W,j_M[$, or just $[W[$.  Let $\calb(M)$ be the set of 
equivalence classes.

\begin{Example}\label{L.BMCf}\rm
Let $(W,j_M,j_N)$ be an invertible cobordism between closed manifolds $M$ and $N$. 
{\em  Then, $[W,j_M[=[M\times I,j_M^0[$ in $\calb(M)$ if and only if 
$[W]=[C_f]$ for some diffeomorphism $f\:M\to N$}.   
Indeed, the {\it if} part follows from \lemref{L.mapscob1}. Conversely, let $F\:M\times I\to W$ be a diffeomorphism $\rel M\times \{0\}$
and let $f\:M\to N$ be the restriction of $F$ to $M\times \{1\}$. 
Then $F^\mun\pcirc j_N=j_M^1\pcirc f^\mun$, which implies that $[W,j_M,j_N]=[M\times I, j_M^0,j_M^1\pcirc f^\mun]$.
The latter coincides with $[C_f]$ by \lemref{L.mapscob1} again.
\end{Example}

For any closed manifold $N$, the correspondence $[W]\mapsto [W[$ gives a map  
$\tilde\alpha_{M,N}\:\cob^*(M,N)\to \calb(M)$ that we shall now study 
(note that $\cob^*(M,N)$ is empty if $N$ is not invertibly cobordant to $M$).
The group $\aut(N)$ of self-diffeomorphisms of $N$ acts on the right on $\cob^*(M,N)$ by
$[W,j_M, j_N]\,\varphi = [W,j_M, j_N\pcirc\varphi]$. The map $\tilde\alpha_{M,N}$ is invariant
for this action and then descends to a map $\alpha_{M,N}\:\cob^*(M,N)/\aut(N)\to\calb(M)$.
We claim that the latter is injective. Indeed, if 
$\tilde\alpha_{M,N}([W,j_M, j_N])=\tilde\alpha_{M,N}([W',j_M', \hat j_N'])$, then 
there is a diffeomorphism $h\:W\to W$ such that $h\pcirc j_M=j_M'$ and thus
$$
[W,j_M, j_N]=[W',j'_M, h\pcirc j_N] = [W',j_M, j_N'] \, k\, ,
$$
where $k=(j_N')^\mun h\pcirc j_N \in \aut(N)$.

Let $\calm_n$ be the set of diffeomorphism classes of closed manifolds of dimension~$n$.
The correspondence $(W,M,N)\mapsto [N]$ defines a map 
\beq{D.mape}
e\:\calb(M)\to\calm_n  \, .
\eeq
Let $\calm_n^0$ be a set of representatives of $\calm_n$ (one manifold for each class).
\begin{Lemma}\label{L.partBM}
The map $\alpha=\dcup_{N\in\calm_n^0}\,\alpha_{\scr M,N}$ provides a bijection
$$
\dia{
\coprod_{N\in\calm_n^0} \cob^*(M,N)\bigg/\aut(N) \ar[r]^(0.72){\alpha}_(0.72){\approx} & \calb(M)  \, .
}
$$
The resulting partition of $\calb(M)$ is the one given by the preimages of the map~$e$.
\end{Lemma}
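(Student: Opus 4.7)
\bigskip

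\noindent\textbf{Proof plan.}
The plan is to establish three things: (i) each restricted map $\alpha_{M,N}$ is injective with image landing inside $e^{\mun}([N])$, (ii) the images of $\alpha_{M,N}$ and $\alpha_{M,N'}$ are disjoint for distinct $N,N'\in\calm_n^0$, and (iii) the union of the images covers all of $\calb(M)$. Combined with the injectivity of each $\alpha_{M,N}$ (already recorded in the text just before the lemma), these imply both the bijection statement and the fact that the resulting partition is exactly $\{e^{\mun}([N])\}_{N}$.

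For step (i), the image of $\alpha_{M,N}$ obviously lies in $e^{\mun}([N])$: if $(W,j_M,j_N)$ represents a class of $\cob^*(M,N)$, then by definition $e([W,j_M[)=[N]$, and this value depends only on $[W,j_M[$. Injectivity of $\alpha_{M,N}$ has been verified in the paragraph preceding the lemma, where it is shown that equality in $\calb(M)$ forces equality in $\cob^*(M,N)/\aut(N)$ via the diffeomorphism $k=(j_N')^\mun\pcirc h\pcirc j_N$.

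For step (ii), suppose $\alpha_{M,N}([W,j_M,j_N])=\alpha_{M,N'}([W',j_M',j_{N'}'])$ with $N,N'\in\calm_n^0$. Applying $e$ to both sides yields $[N]=[N']$ in $\calm_n$; since $\calm_n^0$ contains exactly one representative from each diffeomorphism class, this forces $N=N'$. Hence the two classes actually come from the same summand, and disjointness of the images in $\calb(M)$ is automatic.

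For step (iii), let $\xi=[W,j_M[\in\calb(M)$ be arbitrary. The triad structure on $(W,M,N_0)$ (for some closed manifold $N_0$ forming the other end) shows that $\partial W\setminus j_M(M)$ is a closed $n$-manifold; let $N\in\calm_n^0$ be its (unique) representative and choose any diffeomorphism $j_N\:N\to \partial W\setminus j_M(M)$. Then $(W,j_M,j_N)$ is a cobordism in $\cob^*(M,N)$ whose class maps to $\xi$ under $\alpha_{M,N}$. This completes surjectivity; moreover, the argument shows that every element of $e^{\mun}([N])$ lies in the image of $\alpha_{M,N}$, so the partition of $\calb(M)$ induced by $\alpha$ coincides with the one given by the fibers of $e$. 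The main delicate point is really just step (ii)---making sure that the disjoint-union indexing by representatives $\calm_n^0$ rather than by the whole of $\calm_n$ is what prevents the different pieces from overlapping; once one insists on one representative per class, everything lines up cleanly.
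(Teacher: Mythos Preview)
Your proof is correct and follows essentially the same approach as the paper's: both use the map $e$ to separate the summands (your step (ii) is exactly the paper's injectivity argument), rely on the previously established injectivity of each $\alpha_{M,N}$, and prove surjectivity by replacing the ``other end'' of an arbitrary invertible cobordism by its chosen representative in $\calm_n^0$ via a diffeomorphism. Your presentation is slightly more explicit about the partition statement, but the mathematical content is the same.
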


\begin{proof}
Let us first see that $\alpha$ is injective. 
Let $a\in\cob^*(M,N)$ and $b\in\cob^*(M,N')$ with $N,N'\in\calm_n^0$. If $\alpha(a)=\alpha(b)$, then 
$e\pcirc\alpha(a)=e\pcirc\alpha(b)$ and then $N=N'$, whence $a=b$ since $\alpha_{M,N}$ is injective.
To prove the surjectivity of $\alpha$, let $(W,j_M, j_N)$ be an invertible cobordism and
let $N_0$ be the representative of $e([W[)$ in $\calm_n^0$. 
Thus there exists a diffeomorphism $h\:N_0\to N$ and \\ 
$[W[=\alpha_{M,N_0}([W,j_M, j_N\pcirc h])$. 
\end{proof}

\begin{Remarks}\label{Bprime}\rm
\noindent (1) Composition of cobordisms defines an operation 
\beq{opcobb}
\cob^*(L,M)\times\calb(M)\fl{\pcirc}\calb(L) \, ,
\eeq
making $\calb$ a functor on the category of closed manifolds and 
(equivalence classes of) invertible cobordisms.\sk{2}

\noindent (2) There is a version $\calb'(M)$ of $\calb(M)$ where we only use
{\em triples} $(W, M, N)$. The obvious inclusion $\calb'(M)\to \calb(M)$ is, in 
fact, a bijection, by the observation at the end of \ref{Co.cobordism}. This will
often be usedwithout further mention, to simplify notation.

Note that, using \lemref{L.mapscob1}, the map $\calb(M)\to \calb'(M)$ can
 also be defined as $[W,j_M, j_N]\mapsto [C_{j_M}\pcirc(W,\id_{M'},j_N)]$, 
where $M'=j_M(M)$.
\end{Remarks}
\end{ccote}

\begin{ccote}\label{Co.hcob} h-cobordisms.  \rm \
A cobordism $(W,j_M,j_N)$ from $M$ to $N$ is called an \dfn{h-cobordism} if both of the 
maps $ j_M:M\to W$ and $ j_N:N\to W$ are homotopy equivalences.
The composition of $j_N$ with a homotopy inverse of $j_M$ then produces a homotopy equivalence
$h:N\to M$ whose homotopy class is well defined. Any choice of such an $h$ will be called a
\dfn{natural homotopy equivalence associated to $W$}.
The main relationship between h-cobordisms and invertible cobordism is given by the following proposition.

\begin{Proposition}\label{P.ihcob}
An invertible cobordism is an h-cobordism. The converse is true when $n\ne 3$. 
\end{Proposition}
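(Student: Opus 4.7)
The plan is to prove both implications by combining the geometric characterization of invertibility from \proref{P.invcob2} with the Whitehead-torsion machinery.

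For the forward implication, suppose $(W, j_M, j_N)$ is invertible. By \proref{P.invcob2}, there is a diffeomorphism $W - j_N(N) \di M \times [0,\infty)$ carrying $j_M(M)$ to $M \times \{0\}$, so $j_M\: M \to W - j_N(N)$ is a homotopy equivalence because $M \times \{0\} \hookrightarrow M \times [0,\infty)$ is. Moreover the inclusion $W - j_N(N) \hookrightarrow W$ is a homotopy equivalence: the boundary submanifold $j_N(N)$ admits a collar in $W$, and pushing points in this collar off $j_N(N)$ yields a deformation retraction from $W$ onto $W - j_N(N)$. Composing these two equivalences shows $j_M$ is a homotopy equivalence; applying the same argument to the symmetric condition (b$'$) of \proref{P.invcob2} handles $j_N$.

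For the converse with $n \neq 3$, I would treat low and high dimensions separately. When $n \leq 2$, classical results (classification of surfaces, Waldhausen's rigidity theorem for Haken $3$-manifolds, and the geometrization/Poincar\'e theorem) imply that any h-cobordism is already diffeomorphic to $M \times I$ rel boundary, hence trivially invertible with inverse $\bbun_N$. When $n \geq 5$, the plan is to use the s-cobordism theorem to produce the inverse. Let $\tau \in \wh(\pi_1 M)$ be the Whitehead torsion of $W$. By Wall's realization theorem there exists an h-cobordism $W'$ starting from $N$ whose Whitehead torsion cancels $\tau$ under the sum formula for compositions of h-cobordisms. Then $W \pcirc W'$ is an h-cobordism of $M$ with trivial torsion, hence diffeomorphic to $M \times I$ rel boundary by the s-cobordism theorem; a symmetric argument yields $W' \pcirc W \di N \times I$, so $W'$ is the desired inverse.

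The case $n = 4$ is where I expect the main difficulty to lie, since the resulting cobordism has dimension $5$, a range in which the smooth s-cobordism theorem is unavailable. The plan for this case would be to establish directly that $W - \partial W \di M \times \bbr$, perhaps via a dimension-specific engulfing or stabilization argument (or by invoking a $5$-dimensional invertibility result such as Stallings-Kwasik-type handle manipulations), so that invertibility follows from condition (c) of \proref{P.invcob2}. Pinning down this last step is the main obstacle, and it is also consistent with the fact that $n=3$ must be excluded: in that dimension the cobordism is $4$-dimensional, where the analogous engulfing and torsion techniques genuinely fail.
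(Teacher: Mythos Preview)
Your forward implication is correct but heavier than the paper's: you invoke \proref{P.invcob2}, whereas the paper argues directly from the definition of invertibility. If $(W',N,M)$ is an inverse for $(W,M,N)$, the inclusions $M\subset W\subset W\pcirc W'\di M\times I$ and $W\subset W\pcirc W'\subset W\pcirc W'\pcirc W\di W$ exhibit $M$ and $W$ as deformation retracts of each other; no appeal to the open-end characterization is needed.

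For the converse, your high-dimensional sketch ($n\geq 5$) is essentially the argument the paper defers to \thref{T.hcob}, so that part is fine in outline (though you gloss over the point that the s-cobordism theorem gives a diffeomorphism $\rel M$, not $\rel\partial$, so one must compose with a mapping cylinder to get an honest right inverse in $\cob^*$, and the left inverse requires the duality formula rather than literal symmetry).

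The real gap is $n=4$, and you acknowledge it. Your proposed strategy of showing $W-\partial W\di M\times\bbr$ directly is in fact the right one, and it is precisely what Stallings proves in \cite[Theorem~4]{Stallings} by an infinite-swindle/engulfing argument: one shows that the interior of any h-cobordism between closed manifolds of dimension $\geq 4$ is a product with $\bbr$. The paper simply cites this result. Your hesitation is understandable, but the argument is not Kwasik-type handle manipulation; it is the same infinite-process technique underlying \proref{P.invcob2} itself, run with only the h-cobordism hypothesis.

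Finally, your $n\leq 2$ toolkit is slightly off. Waldhausen's theorem for Haken $3$-manifolds does not cover h-cobordisms with an $S^2$ or $\rp{2}$ end, which are exactly the delicate cases. The paper (in \proref{P.BSurf}) handles $S^2$ via Perelman's theorem, $\rp{2}$ by an equivariant lift plus Perelman, and $\chi(M)\leq 0$ via Stallings' fibering result \cite{StallingsFib}; geometrization in full is not invoked.
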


The above statement is unknown for $n=3$.

\begin{proof} It suffices to consider the case of an invertible triad
$(W,M,N)$.  Let  $(W',N,M)$ be an inverse for $W$, and choose  diffeomorphisms
 $W\pcirc W' \stackrel{\approx}\to M\times I$ rel $M$ and  
$W'\pcirc W \stackrel{\approx}\to N\times I$ rel $N$. 
The inclusions
$M\subset W \subset W \pcirc W'$ and 
$W\subset W \pcirc W' \subset W \pcirc (W' \pcirc W)\approx W$ show that
$M$ and $W$ are homotopy retracts of each other. Analogously for $N$ and $W$.
\smallskip

That an h-cobordism is invertible when $n\geq 5$ will be proven in \thref{T.hcob}.
For $n=4$, this is a result of Stallings (see \cite[Thm~4]{Stallings}), and for $n\leq2$ it follows 
from (the proof of) \proref{P.BSurf}.
\end{proof}
\end{ccote}

\begin{ccote}\label{Co.tau} Whitehead torsion. \ \rm We recall here some facts about Whitehead torsion and the s-cobordism theorem.
For more details, see \cite{MilnorWT,CohenBook}. 

The Whitehead group $\wh(\pi)$ of a group $\pi$ is defined as
\beq{defWh}
\wh(\pi) = GL_\infty(\bbz\pi)\Big/ E_\infty(\bbz\pi)\cup (\pm\pi) \, ,
\eeq
where $E_\infty(\bbz\pi)$ is the subgroup of elementary matrices and $(\pm \pi)$ denote the subgroup 
of $(1\times \{1\})$-invertible matrix $(\pm \gamma)$ with $\gamma\in\pi$. As $E_\infty(\bbz\pi)$
is the commutator of $GL_\infty(\bbz\pi)$, the group $\wh(\pi)$ is abelian.

A pair $(X,Y)$ of finite connected CW-complexes is an \dfn{h-pair} if the inclusion $Y\hookrightarrow X$ is a homotopy equivalence.
To such a pair is associated its \dfn{Whitehead torsion} $\tau(X,Y)\in\wh(\pi_1Y)$. The Whitehead torsion $\tau(f)\in \wh(K)$ 
of a homotopy equivalence $f\:K\to L$ ($K$, $L$ finite CW-complexes) is defined by $\tau(f)=\tau(C_f,K)$, 
where $C_f$ is the mapping cylinder of $f$. If $\tau(f)=0$, we say that $f$ is a \dfn{simple homotopy equivalence}.

If $K\fl{f} L \fl{g} M$ are homotopy equivalences between finite CW-complexes, then
\beq{comtors1}
\tau(g\pcirc f) = \tau(f) + (f_*)^\mun(\tau(g))
\eeq
where $f_*\:\wh(\pi_1L)\to\wh(\pi_1K)$ is the isomorphism induced by $f$.  
Also useful is the following partial product formula. 
Let $K$, $L$ and $Z$ be connected finite CW-complexes and let $f\:K\to L$ be a homotopy equivalence.
Then, in $\wh(\pi_1(K\times Z))$, one has
\beq{parProdFor}
\tau(f\times \id_Z) = \chi(Z)\cdot\tau(f) \, ,
\eeq
where $\chi(Z)$ is the Euler characteristic of $Z$ (see \cite[(23.2)]{CohenBook}).

\begin{Remark}\rm This definition of the torsion of a homotopy equivalence is 
slightly non-standard, as it measures the torsion in the Whitehead group of
the source of $f$, rather than the target, as in \cite{CohenBook} and 
\cite{MilnorWT}.  The two definitions are of course equivalent, but for our 
purposes, the current definition is more convenient,
since now the torsion of a pair $(X,Y)$ is equal to the torsion of the 
inclusion map $X\subset Y$.
\end{Remark}

An easy case for computing $\tau(X,Y)$ is when the h-pair $(X,Y)$ is in \dfn{simplified form}, i.e.
\beq{simform}
X = Y \,\cup\, \bigcup_{i=1}^p\, e_i^r \,\cup\, \bigcup_{i=1^p}\, e_i^{r+1}  \quad (r\geq 2) \,
\eeq 
where $e_i^j$ denotes a $j$-cell. 
Let $(\tilde X,\tilde Y)$ be the pair of universal covers. Then the chain complex of 
$C_*(\tilde X,\tilde Y)$ is a complex of free $\bbz\pi$-modules and the boundary operator 
$\delta\: C_{r+1}(\tilde X,\tilde Y)\to C_{r}(\tilde X,\tilde Y)$ is an isomorphism. 
Bases may be obtained for $C_*(\tilde X,\tilde Y)$ by choosing orientations of $e_i^j$
and liftings $\tilde e_i^j$ in $\tilde X$. 
Then,for such bases, $\tau(X,Y)$ is represented in $GL_p(\bbz\pi)$ 
by the matrix of $\delta^\varepsilon$ with $\varepsilon=(-1)^{(r-1)}$.

Let $M$ be a connected manifold. The Whitehead group $\wh(\pi_1M)$ is then endowed with an involution 
\beq{defInvol}
\tau\mapsto\bar\tau
\eeq
induced by the anti-automorphism of 
$\bbz\pi_1M$ satisfying $\bar a = \omega(a)a^\mun$ for $a\in\pi_1M$, 
where $\omega\:\pi_1M\to \{\pm 1\}$ 
is the orientation character of $M$. We denote by $\wh(M)$ the abelian group $\wh(\pi_1M)$
equipped with this involution.

Let $W$ be an invertible cobordism starting from the closed connected manifold $M$. 
Then $(W,M)$ admits a $\calc^1$-triangulation which is unique up to PL-homeomorphism \cite[Theorems~7 and~8]{WheadTri}.
This makes $(W,M)$ an h-pair whose Whitehead torsion $\tau(W,M)\in\wh(M)$ is well defined.
An invertible cobordism with vanishing torsion is called an \dfn{s-cobordism}.

To compute $\tau(W,M)$, one can use a simplified form analogous to~\eqref{simform}. 

\begin{Lemma}\label{L.simformCob}
Let $(W,M,N)$ be an invertible cobordism with $\dim M =n\geq 4$. Then, for $2\leq r\leq n-2$,
there exists a decomposition 
$$
W = W_r \pcirc W_{r+1}
$$
where $(W_r,M,M_r)$ has a handle decomposition starting form $M$ with
only handles of index $r$
and  
$(W_{r+1},M_r,N)$ has a handle decomposition starting form $M_r$ with 
only handles of index $r+1$.
\end{Lemma}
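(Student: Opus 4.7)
The plan is to start from any handle decomposition of $W$ on $M$ (given by a Morse function) and progressively simplify it through handle cancellations, slides, and trading until only handles of indices $r$ and $r+1$ remain. Two ingredients make this possible: $W$ is an h-cobordism by \proref{P.ihcob}, so $j_M\colon M\to W$ is a $\pi_1$-isomorphism; and $\dim W = n+1 \geq 5$, which makes the relevant general-position and Whitney-trick arguments available.

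First I would eliminate the extreme indices. Any $0$-handle can be cancelled against a $1$-handle using connectedness of $W$. For each remaining $1$-handle $h$, the meridian of $h$ represents an element of the kernel of $\pi_1\bigl(M\cup(\text{$1$-handles})\bigr)\to\pi_1(W)$, which is trivial since $\pi_1(M)\cong\pi_1(W)$. Hence the meridian bounds a singular $2$-disk which, by general position in $\dim W\geq 5$, can be taken embedded and transverse to the cocore of $h$, so $h$ can be traded for a $3$-handle. Applying the dual argument to $W$ viewed as a cobordism from $N$ eliminates all handles of indices $n$ and $n+1$. The resulting decomposition has handles only of indices in $\{2,3,\dots,n-1\}$; in particular, for $n=4$ this already yields the desired form $W = W_2\pcirc W_3$.

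For $n\geq 5$, I would next consolidate the remaining handles into indices $r$ and $r+1$ by standard handle-trading. For a $k$-handle $h^k$ with $k<r$, introduce a trivially cancelling pair of handles of indices $k+1$ and $k+2$ just above $h^k$, slide the attaching sphere of the new $(k+1)$-handle once across the belt sphere of $h^k$ so that it meets that belt sphere transversely in a single point, then geometrically cancel the resulting pair $\bigl(h^k, (k+1)\text{-handle}\bigr)$. The net effect is to replace $h^k$ by a $(k+2)$-handle. Iterating shifts every handle of index $<r$ upward by two until none remain, and the symmetric argument applied to the dual decomposition from $N$ eliminates all handles of index $>r+1$. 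This leaves the required factorization $W = W_r\pcirc W_{r+1}$.

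The main obstacle is the consolidation step: the handle slides and subsequent cancellations must be realized by actual isotopies in the relevant level boundary manifolds, which have dimension $n$. This depends on the Whitney trick, available precisely when $n\geq 5$; and for $n=4$ no consolidation is needed, so the argument closes uniformly in all cases $n\geq 4$. The technical details of this handle calculus are classical and are worked out in full in \cite[Sections~5--8]{MilnorHCob} and \cite{CohenBook}.
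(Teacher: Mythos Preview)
Your strategy coincides with the paper's: both reduce to Kervaire's handle-trading argument \cite[Lemma~1]{KeBMS}, which is classical for $n\geq 5$, and your sketch of the consolidation step for $n\geq 5$ is correct.

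The substantive gap is the case $n=4$, which is exactly where the paper contributes something beyond the reference to Kervaire. To trade a $1$-handle one must arrange that the attaching circle of some $2$-handle meets the belt sphere \emph{geometrically} in a single point; equivalently, one needs that two homotopic embeddings of $S^1$ into the level manifold $M_1$ are ambient isotopic. Your sentence ``by general position in $\dim W\geq 5$ [the disk] can be taken embedded'' does not deliver this: embedding a $2$-disk somewhere in $W^5$ is easy, but the trade is an operation in $M_1$, which has dimension~$4$, and there a generic homotopy of circles is only an immersed annulus with double points, so the Whitney trick is unavailable. The paper fills this in by realising the homotopy as a map $S^1\times I\to M_1\times I$, perturbing it to an embedding by general position in this $5$-manifold (so the two circles are concordant), and then invoking Hudson's theorem \cite{Hudson} that concordance implies ambient isotopy in codimension~$\geq 3$. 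The dual elimination of $n$-handles for $n=4$ has the same gap. (A minor additional point: a loop running once through the $1$-handle need not itself lie in the kernel of $\pi_1(M_1)\to\pi_1(W)$; one must first correct it by a loop in $M$.)
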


\begin{proof}
When $n\geq 5$, this is \cite[Lemma~1]{KeBMS}. We have to see that the proof works for $n=4$. 
The principle is to eliminate  handles of index $k$ by 
replacing them by  handles of index $k+2$. There is an easy argument eliminating $0$-handles,
which also works when $n=4$. There is also a special argument to get rid of $1$-handles, given in 
\cite[pp.~35--36]{KeBMS}. This argument also works when $n=4$: it suffices to prove that 
two embeddings $f_0,f_1$ of $S^1$ into a 4-dimensional manifold $P$ which are related by a homotopy $f_t$ are 
ambient isotopic. Let $f\:S^1\times I \to P\times I$ be the map $f(x,t)=(f_t(x),t))$. By general position,
$f$ is homotopic relative $S^1\times\partial I$ to an embedding. Therefore, $f_0$ and $f_1$ are concordant and,
as we are in codimension 3, they are ambient-isotopic \cite{Hudson}.
\end{proof}

The number of handles for $W_{r+1}$ and $W_r$ is the same (say, $p$) since $M\hookrightarrow W$ is a homotopy equivalence.
As a consequence (see \cite[p.~83]{RouSan}), $(W,M)$ retracts by deformation relative $M$ onto a CW-pair $(X,M)$ 
as in~\eqref{simform} from which we can compute $\tau(W,M)=\tau(X,M)$.

Torsions of invertible cobordisms satisfy some specific formulae. 
First, let $(W,M,N)$ and $(W',N,N')$ be invertible cobordisms. Then, in $\wh(M)$, one has
\beq{comphcob}
\tau(W\pcirc W',M) = \tau(W,M) + h_*(\tau(W',N))  \, ,
\eeq
where $h_*\:N\to M$ is a natural homotopy equivalence associated to $W$.
This follows from \cite[(20.2) and (20.3)]{CohenBook}.  
One also has the \dfn{duality formula} (see \cite[pp.~394--398]{MilnorWT}):
\beq{duality}
h_*(\tau(W,N)) =  (-1)^n\overline{\tau(W,M)} \, .
\eeq

More generally, if $(W,j_M)$ represents an element in $\calb(M)$, we define
$$
\calt(W,j_M)=\tau(j_M)=({j_M}_*)^{-1}\tau(W,j_M(M)).
$$

The duality formula now becomes 
\beq{duality2}
( j_N)_*^{-1} ( j_M)_*(\tau(W,j_N)) =  (-1)^n\overline{\tau(W, j_M)} \, .
\eeq
Thanks to the uniqueness of $\calc^1$-triangulations, this gives a well 
defined map
$$
\calt\: \calb(M) \to \wh(M).  
$$

\begin{Theorem}\label{T.hcob}
Let $M$ be a smooth closed connected manifold of dimension $\geq 5$. Then,
\begin{itemize}
\item[(i)] the map $\calt\: \calb(M) \to \wh(M)$ is a bijection;
\item[(ii)] any h-cobordism $(W,M,N)$ is invertible; 
\item[(iii)] $\calt(W,j_M)=0$ if and only if $(W,j_M(M))\di (M\times I,M\times \{0\}) \ (\rel M)$. 
\end{itemize}
\end{Theorem}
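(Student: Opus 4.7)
The plan is to prove (iii) first, as it is the essential geometric content of the s-cobordism theorem, and to derive (i) and (ii) from it. The main tool is the handle decomposition in simplified form from \lemref{L.simformCob}: write $W = W_r \pcirc W_{r+1}$ with $p$ handles of each index $r$ and $r+1$ for some $2 \leq r \leq n-2$. The proof of that lemma extends to any h-cobordism, not just invertible ones, using the standard handle-trading and cancellation techniques of low-index handles, provided $n\geq 4$.

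For the nontrivial direction of (iii), lift the handles to the universal cover to obtain $\bbz\pi_1 M$-bases of $C_r(\tilde W,\tilde M)$ and $C_{r+1}(\tilde W,\tilde M)$, so that the boundary $\delta$ is represented by a $p\times p$ matrix $A$ whose class in $\wh(M)$ is (up to sign) $\calt(W,j_M)$. Vanishing of this class means $A$ can be reduced to the identity by the operations defining $\wh$: elementary row/column operations, multiplication of a row by $-1$, and multiplication of a row by a group element $\gamma\in\pi_1M$. Each algebraic move corresponds to a geometric manipulation: handle slides realize the elementary operations, reversing orientations realizes sign changes, and reselecting lifts of attaching paths realizes multiplication by group elements. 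Once the matrix is the identity, each $(r+1)$-handle meets a unique $r$-handle algebraically once; the Whitney trick then converts this into a single transverse geometric intersection point, allowing the pair to be cancelled. Iterating removes all handles and gives $W \di M\times I$ rel $M$. The converse is immediate from $\tau(M\times I,M)=0$.

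For (ii) and for surjectivity in (i), I use a realization procedure: given $\tau\in\wh(M)$, attach $p$ trivial $r$-handles and $p$ trivial $(r+1)$-handles to $M\times I$ and modify the attaching spheres of the $(r+1)$-handles via a matrix representing $\tau$; this produces an h-cobordism with torsion exactly $\tau$. Now, given an h-cobordism $W$ from $M$ to $N$ with torsion $\tau=\calt(W,j_M)$, realize an h-cobordism $W'$ starting at $N$ whose torsion is $-h_*^{-1}(\tau)$, where $h\colon N\to M$ is the natural homotopy equivalence of $W$. By \eqref{comphcob}, $\calt(W\pcirc W',j_M)=0$, so (iii) gives $W\pcirc W'\di M\times I$ rel $M$; a parallel argument using the duality formula \eqref{duality} to compute $\calt(W'\pcirc W,j_N)$ shows $W'$ is also a left inverse, establishing invertibility of $W$. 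Injectivity in (i) then follows: if two elements of $\calb(M)$ have the same torsion, composing one with the inverse of the other yields an element of $\calb(M)$ with trivial torsion, hence trivial by (iii).

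The principal obstacle is the Whitney trick: converting an algebraic intersection number of $\pm 1$ into a single transverse geometric point requires the middle level (of dimension $n$) to have dimension at least $5$, so that two $2$-disks in general position can be isotoped to eliminate excess intersections. This is exactly where the hypothesis $n\geq 5$ enters, and explains why the analogous smooth statement fails in dimension four. A secondary technical subtlety is verifying that the geometric handle moves realize all the relations defining $\wh(\pi_1M)$, including the quotient by $\pm\pi_1M$ in~\eqref{defWh}, which requires careful bookkeeping of handle orientations and choices of basepoint paths.
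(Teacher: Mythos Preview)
Your approach is essentially the paper's: establish (iii) as the s-cobordism theorem, realize arbitrary torsions by h-cobordisms, then build inverses by arranging composite torsions to vanish and applying (iii). The paper simply cites (iii) and the realization step (Milnor) rather than sketching their proofs as you do.

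Two points need adjustment. First, in (ii) you claim the \emph{same} $W'$ serves as both right and left inverse, computing $\calt(W'\pcirc W,j_N)$ via duality. This is not justified: the composition formula gives $\tau(\hat W'\pcirc W,N)=-h_*^{-1}(\tau)+g_*(\tau)$, where $h$ is the natural equivalence of $W$ and $g$ that of $\hat W'$, and there is no a~priori reason for $g_*=h_*^{-1}$. The paper instead constructs a \emph{separate} cobordism $(W_L,M_L,M)$ with $\tau(W_L,M)=(-1)^{n+1}\bar\sigma$; duality is applied to $W$ itself (to express $\tau(W,N)$ in terms of $\sigma$), and then $\tau(W_L\pcirc W,N)=0$ follows from~\eqref{comphcob}. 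Having both a left and a right inverse, $W$ is invertible. (Alternatively, one can iterate: your $\hat W'$ has $W$ as a left inverse, and applying your right-inverse construction to $\hat W'$ gives it a right inverse too, forcing $\hat W'\pcirc W=\bbun_N$.)

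Second, your injectivity argument says the composite lies in $\calb(M)$, but $W^{-1}\pcirc W'$ actually lies in $\calb(N)$. The idea is right and easily repaired, but the paper takes a slightly different route: it picks an auxiliary invertible $(V,P,M)$ with $\tau(V,M)=(-1)^{n+1}\bar\alpha$, shows $\tau(V\pcirc W,P)=\tau(V\pcirc W',P)=0$ via the same duality computation as above, deduces $[V]\pcirc[W[\,=\,[V]\pcirc[W'[$ in $\calb(P)$, and cancels the invertible $[V]$.
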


For the situation when $n=3,4$, see \lemref{L.surj4}, the end of \secref{S.nfour} and \secref{S.nthree}.

\begin{proof} The proof involves four steps.
 
\sk{1}\noindent
(1) \em Part (iii). \ \rm
This is the content of the {\it s-cobordism theorem}, which is valid for $n\geq 5$.
This theorem was first independently proved by Barden, Mazur and Stallings in the early 60's. 
For a proof and references, see \cite{KeBMS}.

\sk{1}\noindent (2) \em For any $\tau\in\wh(M)$, there exists an h-cobordism $(V,M,M'))$ with 
$\tau(V,M)=\tau$.  \ \rm This was proven in \cite[Theorem~11.1]{MilnorWT}).

\sk{1}\noindent (3) \em Part (ii). \ \rm Let $(W,M,N)$ be an h-cobordism and let $\sigma=\tau(W,M)$.
Let $f\:N\to M$ be the composition of the inclusion $N\hookrightarrow W$ with a 
retraction from $W$ to $M$. Let $(W_R,N,M_R)$ be an h-cobordism such that $f_*(\tau(W_R,N))=-\sigma$.
By~\eqref{comphcob}, one has
$$
\tau(W\pcirc W_R,M) = \tau(W,M) + f_*(\tau(W_R,N)) = 0 \, .
$$
By part~(iii) already established, there exists a diffeomorphism (relative $M$)  $H\:W\pcirc W_R\to M\times I$. 
Let $h\:M_R\to M\times \{1\}$ be the restriction of $H$ to $M_R$. Using the diffeomorphism $H$ and \lemref{L.mapscob1},
one gets
$$
[W\pcirc W_R,j_M\dcup j_{M_R}] = [M\times I,j^0_M\dcup h] = [C_h] \, .
$$
Therefore, $[W]\pcirc [\hat W_R]=\bbun_M$, where $[\hat W_R] = [W_R]\pcirc [C_{h^\mun}]$.

Similarly, let $(W_L,M_L,M)$ be an h-cobordism with $\tau(W_L,M)=(-1)^{n+1}\bar\sigma$. 
By\eqref{duality} and~\eqref{comphcob}, one has 
\beq{T.hcob-eq10}
f_*(\tau(W_L\pcirc W,N)) = f_*\big(\tau(W,N) + f_*^\mun(\tau(W_L,M))\big) = (-1)^n\bar\sigma +(-1)^{n+1}\bar\sigma 
=0 \, .
\eeq
As above, this permits us to construct a cobordism $\hat W_L$ from $N$ to $M$, 
such that 
$\hat W_L$ 
is a left inverse for $W$: $[\hat W_L]\pcirc [W] = \bbun_N$. 
Having a left and right inverse, $[W]$ is invertible and $[W_L]=[W_R]$.  

\sk{1}\noindent (4) \em Part (i). \ \rm The surjectivity of $\calt$ follows from (2) and part~(ii) already proven.
For the injectivity, let $(W,M,N)$ and $(W',M,N')$ be two invertible cobordisms starting from $M$, 
with $\tau(W,M)=\tau(W',M) = \alpha$. As $\calt$ is surjective, there is an invertible cobordism $(V,P,M)$
such that $\tau(V,M)=(-1)^{n+1}\bar\alpha$.
As in~\eqref{T.hcob-eq10}, we check that $\tau(V\pcirc W,P)=\tau(V\pcirc W',P)=0$. By (1) above, 
there are diffeomorphisms (relative $P$) $H\:V\pcirc W\to P\times I$ and $H'\: V\pcirc W'\to P\times I$,
with restrictions $h\:N\to P\times \{1\}\approx P$ and $h'\:N'\to P\times \{1\}\approx P$. Then
$$
[V]\pcirc [W[ \, = [P\times I[ \, = [V]\pcirc [W'[  \ .  
$$
As $[V]$ is invertible, one gets the equality $[W[=[W'[\,$ in  $\calb(M)$.
\end{proof}

\begin{Remark}\label{MxS1usingscob} \rm
The results of this section may be used to give an alternative proof
that {\em two closed manifolds $M$ and $N$  of dimension $\geq 4$ 
which are h-cobordant are $\bbr$-diffeomorphic} (\proref{P.invcob}). 
Indeed, let $(W,N,M)$ be an h-cobordism.
Then, $W\times S^1$ is an s-cobordism by \eqref{parProdFor} and thus, using 
\thref{T.hcob}, there exists a diffeomorphism $F\:N\times S^1\times I\to W\times S^1$ 
inducing a diffeomorphism $F^1\:N\times S^1\times \{1\}\to M\times S^1$. 
By \proref{P.invcob}, one deduces that $M\sdi N$. Indeed, 
Condition (c) of \proref{P.invcob} may be checked for $\beta=F_1$, using that $F$ may be chosen relative 
$N\times S^1\times \{0\}$.  
\end{Remark}
\end{ccote}

\begin{ccote}\label{concordance}  Remarks on the relative case. 
Concordance. \rm\ 

With minor modifications most of the results in this section go through also
in the relative case, i.\,e. when $M$ and $N$ have nonempty boundaries.  
In particular, we can define invertible cobordisms and relative invertible cobordisms
the same way in this generality. Moreover, the crucial results used in this 
section, the s-cobordism theorem and classification of h-cobordisms by
Whitehead torsion still hold.  Although they are usually only formulated in 
the closed case, the proofs don't really use this, but work exactly the same
way in general, since all the constructions can be done `away from the 
boundary'.  This means that \thref{T.hcob} could just as well have been 
formulated for manifolds with boundary, to the expense of a little more 
notation.

Here we will not need a full discussion of this, but in  \secref{S.classif} we 
come back to a special case, when we wish to
compare invertible cobordisms between the same manifold, using the relation of 
{\em concordance}.

Fix two invertibly cobordant closed manifolds $M$ and $N$, and let
$(W,j_M,j_N)$ and $(W',j'_M,j'_N)$ be two invertible cobordisms between them. 
We say that these cobordisms are {\em concordant} if there is an invertible
cobordism $(X,J_W,J_{W'})$ between them, with the following extra compatibility
condition between $J_*$'s and $j_*$'s: There are embeddings 
$H_M:M\times I\to \partial X$ and $H_N:N\times I\to \partial X$ filling in
$\partial X-(J_WW\cup J_{W'}W')$ and such that $J_W j_M=H_M j^0_M$, 
$J_{W'} j_M'=H_M j^1_M$, $J_W j_N=H_M j^0_N$ and  $J_{W'} j_N'=H_N j^1_N$.

Observe that concordance defines an equivalence relation on  
$Cob^*(M,N)$.  We denote
the set of equivalence classes by $\overline{Cob^*}(M,N)$. Via the
composed map $Cob^*(M,N)\to \calb(M)\to \wh(M)$ this relation corresponds to a relation
on $\wh(M)$, which will be important in \secref{S.classif}. 

\begin{Lemma}\label{L.WhConc} Let $M$ and  $N$ be a compact closed 
manifolds of dimension $n$,
 let $(W,j_M,j_N)$ and $(W',j'_M,j'_N)$ be two invertible cobordisms, and
assume $(X,J_W,J_{W'})$ is a concordance between them. The Whitehead torsions
are then  related by the formula
$$
\tau(W',j'_M)-\tau(W,j_M)={ j_M}{}_*^{-1}(\tau(X,J_W)+(-1)^n\overline{\tau(X,J_W)}\,).
$$
\end{Lemma}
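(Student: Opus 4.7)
My plan is to bootstrap the formula from the composition formula \eqref{comtors1} for Whitehead torsion, the homotopy between the two copies of $M$ inside $X$, and the duality formula \eqref{duality} applied to $X$ itself.

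First I would unpack the compatibility conditions on the concordance. Because $H_M\!:\!M\times I\to\partial X$ satisfies $J_W j_M=H_M j_M^0$ and $J_{W'} j_M'=H_M j_M^1$, the cylinder $H_M(M\times I)$ gives an explicit homotopy between the two compositions
$$J_W\pcirc j_M\,:\, M\to X\quad\text{and}\quad J_{W'}\pcirc j_M'\,:\, M\to X,$$
both of which are homotopy equivalences. Thus their Whitehead torsions (in $\wh(\pi_1 M)$) agree. Applying \eqref{comtors1} to each side and using $\tau(W,j_M)=\tau(j_M)$, $\tau(J_W)=\tau(X,J_W)$ (and similarly with primes), I get
$$\tau(W',j_M')-\tau(W,j_M)=(j_M)_*^{-1}\bigl(\tau(X,J_W)\bigr)-(j_M')_*^{-1}\bigl(\tau(X,J_{W'})\bigr).$$

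Second, I would remove the primed term using duality. Since $X$ is an invertible cobordism whose two ``ends'' are the $(n{+}1)$-dimensional manifolds $W,W'$, the duality formula \eqref{duality} (in its relative version, which goes through as explained in \ref{concordance}) reads
$$h_*\bigl(\tau(X,J_{W'})\bigr)=(-1)^{n+1}\,\overline{\tau(X,J_W)},$$
where $h\:W'\to W$ is the natural homotopy equivalence associated to $X$. Composing the cylinder homotopy with any retraction $X\to W$ shows $h\pcirc j_M'\simeq j_M$, hence $(j_M)_*=h_*\pcirc(j_M')_*$, equivalently $(j_M')_*^{-1}=(j_M)_*^{-1}\pcirc h_*$. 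Substituting gives
$$(j_M')_*^{-1}\bigl(\tau(X,J_{W'})\bigr)=(-1)^{n+1}(j_M)_*^{-1}\bigl(\overline{\tau(X,J_W)}\bigr).$$

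Plugging this back yields
$$\tau(W',j_M')-\tau(W,j_M)=(j_M)_*^{-1}\Bigl(\tau(X,J_W)-(-1)^{n+1}\overline{\tau(X,J_W)}\Bigr)=(j_M)_*^{-1}\Bigl(\tau(X,J_W)+(-1)^n\overline{\tau(X,J_W)}\Bigr),$$
which is the claimed formula.

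The only real obstacle is bookkeeping: one must verify that the duality formula \eqref{duality}, stated in the text for cobordisms between closed manifolds, is legitimate for the cobordism $X$ between manifolds-with-boundary $W,W'$. This is precisely the point flagged in \ref{concordance}: because all constructions in the s-cobordism/duality proof happen ``away from the boundary'' (the collar $H_M(M\times I)\cup H_N(N\times I)$ plays no role in the torsion computation), the closed-case proofs apply verbatim, and the dimensional sign is governed by $\dim W=n+1$, producing the $(-1)^{n+1}$ that yields $(-1)^n$ after the sign flip in the rearrangement.
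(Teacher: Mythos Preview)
Your proof is correct and follows essentially the same route as the paper: both compare the torsions of the homotopic equivalences $J_W\pcirc j_M$ and $J_{W'}\pcirc j_M'$ via the composition formula \eqref{comtors1}, and then eliminate the $W'$-term using duality. The only cosmetic difference is that the paper invokes the duality formula in the form \eqref{duality2} directly, while you use \eqref{duality} together with the explicit identification $(j_M')_*^{-1}=(j_M)_*^{-1}\pcirc h_*$; these are the same step unpacked to different levels of detail.
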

\begin{proof} The two maps $ j_W j_M$ and $ j_{W'} j'_M$ are
homotopic homotopy equivalences. Hence they have the same torsion, and we get 
the identity
$$
\tau( j_M)+{ j_M}{}_*^{-1}(\tau( j_W))=
\tau( j'_M)+{ j'_M}{}_*^{-1}(\tau( j_{W'})).
$$
The result now follows from the duality formula
(\eqref{duality2}).
\end{proof}
\end{ccote}

\section{The case $n\geq 5$}\label{S.nfive}
The following theorem is a direct consequence of \proref{P.invcob} and \thref{T.hcob}.

\begin{Theorem}\label{T.n5}
Let $M$ and $N$ be smooth closed connected manifolds of dimension $n\geq 5$ such that 
$N\sdi M$. Suppose that $\wh(M)=0$. Then $N\di M$.
\end{Theorem}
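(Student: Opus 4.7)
The plan is to chain together Proposition~\ref{P.invcob}, Proposition~\ref{P.ihcob}, and Theorem~\ref{T.hcob}, using the hypothesis $\wh(M)=0$ to collapse the Whitehead torsion obstruction.

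First, from $N\sdi M$ and the implication $(a)\Rightarrow(b)$ of \proref{P.invcob}, I would produce an invertible cobordism $(W,j_M,j_N)$ from $M$ to $N$. By \proref{P.ihcob}, this invertible cobordism is in particular an h-cobordism, so the Whitehead torsion $\calt(W,j_M)\in\wh(M)$ is well defined.

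Next, since $\wh(M)=0$ by hypothesis, we must have $\calt(W,j_M)=0$. Applying \thref{T.hcob}(iii), which requires $\dim M\geq 5$ and is essentially the s-cobordism theorem, we conclude that there is a diffeomorphism $(W,j_M(M))\di (M\times I,M\times\{0\})\ (\rel M)$. Restricting this diffeomorphism to the other boundary component immediately yields a diffeomorphism $N\di M$, as desired.

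Since every step is a direct citation of a result already established in the excerpt, there is no real obstacle here; the proof is just the assembly of these pieces. The only mild subtlety is ensuring that the dimension hypothesis $n\geq 5$ is used in the correct place, namely in invoking \thref{T.hcob}(iii) (the s-cobordism theorem), and noting that Proposition~\ref{P.invcob} and Proposition~\ref{P.ihcob} hold without dimensional restriction in the invertible direction we need.
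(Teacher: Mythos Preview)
Your proof is correct and follows essentially the same route as the paper, which states the theorem as a direct consequence of \proref{P.invcob} and \thref{T.hcob}. Your extra invocation of \proref{P.ihcob} is harmless but not strictly necessary, since in the paper the torsion map $\calt$ is already defined on $\calb(M)$ for invertible cobordisms without passing through the h-cobordism property.
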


As $\wh(\{1\})=0$, \thref{T.n5} implies Theorem~A in the case $n\geq 5$. As a first generalization, 
let us consider the following conjecture.

\begin{Conjecture}\label{CFJ}
Let $M$ and $N$ be smooth connected closed manifolds of dimension $\geq 5$ such that $N\sdi M$.
Suppose that $\pi_1M$ is torsion-free. Then $N\di M$. 
\end{Conjecture}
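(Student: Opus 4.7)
The plan is to reduce Conjecture CFJ directly to the vanishing of the Whitehead group, and then cite (or conjecture) that vanishing. By Theorem~\ref{T.n5}, if $\wh(\pi_1 M) = 0$ and $N \sdi M$ with $n \geq 5$, then $N \di M$. So the entire content of the conjecture, given the machinery of Sections~\ref{S.cob}--\ref{S.nfive}, is the purely group-theoretic statement:

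\emph{If $\pi$ is a torsion-free group, then $\wh(\pi) = 0$.}

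This is precisely one of the classical consequences of the Farrell--Jones conjecture in algebraic $K$-theory (indeed, the label \textsf{CFJ} on the conjecture essentially acknowledges this reduction). Thus my ``proof'' would consist of two steps: first, invoke Proposition~\ref{P.invcob} to replace the hypothesis $N \sdi M$ by the existence of an invertible cobordism $(W,M,N)$; second, invoke Theorem~\ref{T.hcob}(i) and~(iii) to conclude that the class $[W[\, \in \calb(M)$ is determined by $\calt(W,j_M) \in \wh(\pi_1 M)$, so that $\wh(\pi_1 M) = 0$ forces $W \di M \times I$ rel~$M$, and in particular $N \di M$.

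The hard part, of course, is the assumed vanishing of $\wh(\pi)$ for torsion-free $\pi$, which is not known in full generality. I would therefore present the statement as a theorem modulo Farrell--Jones, and note that it is unconditionally true for every class of torsion-free groups for which Farrell--Jones (or just its $K_1$-part, the vanishing of $\wh$) has been verified: hyperbolic groups, $\mathrm{CAT}(0)$ groups, torsion-free fundamental groups of complete nonpositively curved Riemannian manifolds, torsion-free lattices in virtually connected Lie groups, one-relator groups, and so on. For such $M$ the conclusion $N \di M$ is actually a theorem, not a conjecture.

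The only subtlety worth flagging is that we use $\wh(\pi_1 M) = 0$ rather than, say, a weaker ``symmetric torsion'' vanishing: the duality formula \eqref{duality} shows $\calt(W,j_M)$ is always in the image of a symmetrization map, so one might hope to need less. However, Theorem~\ref{T.hcob}(i) identifies $\calt$ as a genuine bijection onto $\wh(M)$, so without further constraints on the cobordism (e.g.\ orientability conditions, or information about the $h$-cobordism inverse), only the full vanishing suffices. This is the one step where I would look for an improvement, but I do not see how to get diffeomorphism from an invertible cobordism of nontrivial, merely ``symmetric'', Whitehead torsion.
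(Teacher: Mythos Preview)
Your treatment is essentially identical to the paper's own discussion of this statement. Note that this is stated as a \emph{conjecture}, not a theorem: the paper does not prove it, but merely observes (exactly as you do) that by \thref{T.n5} it would follow from the conjectural vanishing of $\wh(\pi)$ for torsion-free finitely presented $\pi$, which is part of the Farrell--Jones conjecture, and then lists the same sorts of classes of groups (free abelian, free, virtually solvable, word-hyperbolic, CAT(0), etc.) for which this is known unconditionally.

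One small correction to your final paragraph: the duality formula~\eqref{duality} does \emph{not} show that $\calt(W,j_M)$ lies in the image of any symmetrization map. That formula relates $\tau(W,M)$ to $\tau(W,N)$ (the torsions measured from the two ends), but it places no constraint whatsoever on $\tau(W,M)$ itself; indeed \thref{T.hcob}(i) says every element of $\wh(M)$ arises. You reach the right conclusion anyway, but the premise of that aside is mistaken, so the ``one step where I would look for an improvement'' is illusory.
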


Using \thref{T.n5}, Conjecture~\ref{CFJ} would follow from the well known conjecture
that $\wh(\pi)=0$ if $\pi$ is a torsion-free finitely presented group.
This is part of the Farrell-Jones conjecture in K-theory and it has been proven by several authors for various classes
of finitely presented torsion-free groups, such as 
{\em free abelian groups, free groups, virtually solvable groups, 
word-hyperbolic groups, CAT(0)-groups, etc}. For references, see 
\cite{LueckReich,BaLR} (see also the proof of \thref{T.n4}).

To generalize \thref{T.n5} we need to introduce the concept of \dfn{inertial} 
invertible cobordisms:
a cobordism $(W,j_M,j_N)$ is \dfn{inertial} if $N\di M$.

Let $\cali\calb(M)$ be the subset of elements in $\calb(M)$
represented by inertial cobordisms and let $I(M)=\calt(\cali\calb(M))\subset\wh(M)$.
Note that $I(M)$ is not a subgroup of $\wh(M)$ in general \cite[Remark~6.2]{Hau2}.

\thref{T.hcob} together with \proref{P.invcob} implies 
the following result, which is the strongest possible generalization of \thref{T.n5}:

\begin{Theorem}\label{T.n5gen}
For $M$ a smooth connected closed manifold of dimension $\geq 5$, the following assertions are equivalent.
\begin{itemize}
\item[(i)] Any manifold $\bbr$-diffeomorphic to $M$ is diffeomorphic to $M$.
\item[(ii)] $I(M)=\wh(M)$.   \mancqfd
\end{itemize}
\end{Theorem}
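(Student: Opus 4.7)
The plan is to deduce both implications directly from the bijection $\calt\:\calb(M)\to\wh(M)$ of \thref{T.hcob}(i) combined with the equivalence (a)$\Leftrightarrow$(b) of \proref{P.invcob}, which translates $\bbr$-diffeomorphism into the existence of an invertible cobordism.

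\textbf{(i) $\Rightarrow$ (ii).} Given any $\tau\in\wh(M)$, I will first use surjectivity of $\calt$ to produce an invertible cobordism $(W,j_M,j_N)$ starting from $M$ with $\calt(W,j_M)=\tau$. By \proref{P.invcob}, the existence of this invertible cobordism means $N\sdi M$. The hypothesis (i) then forces $N\di M$, so the cobordism is inertial by definition, i.e.\ $[W,j_M[\,\in\cali\calb(M)$. Hence $\tau\in I(M)$, proving $\wh(M)\subset I(M)$; the reverse inclusion is part of the definition.

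\textbf{(ii) $\Rightarrow$ (i).} Suppose $N\sdi M$. By \proref{P.invcob}, there is an invertible cobordism $(W,j_M,j_N)$ from $M$ to $N$. Set $\tau=\calt(W,j_M)\in\wh(M)$. By hypothesis (ii), $\tau\in I(M)$, so there exists an inertial invertible cobordism $(W',j'_M,j'_{N'})$ with $\calt(W',j'_M)=\tau$ and $N'\di M$. By injectivity of $\calt$ (Theorem \ref{T.hcob}(i)), the classes $[W,j_M[$ and $[W',j'_M[$ coincide in $\calb(M)$, which in particular implies $N\di N'\di M$.

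I do not anticipate a serious obstacle: both implications are essentially bookkeeping once the bijection $\calt$ and the equivalence of $\bbr$-diffeomorphism with invertible cobordism are in hand. The only minor subtlety is to be careful about the distinction between a cobordism ending at $N$ (parametrised by some $j_N$) and the diffeomorphism type of $N$ itself; this is handled by the fact that $\calb(M)$ forgets the parametrisation $j_N$, so the equality of classes in $\calb(M)$ is exactly what is needed to conclude a diffeomorphism between the two ends.
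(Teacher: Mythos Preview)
Your proof is correct and follows exactly the route the paper indicates: the theorem is stated with a \mancqfd\ in the statement itself and is introduced only by the sentence ``\thref{T.hcob} together with \proref{P.invcob} implies the following result,'' so the paper gives no further argument beyond citing those two ingredients. Your write-up is a faithful unpacking of that one-line justification, using surjectivity of $\calt$ for (i)$\Rightarrow$(ii) and injectivity for (ii)$\Rightarrow$(i), together with the map $e\:\calb(M)\to\calm_n$ (equivalently, the fact that equality in $\calb(M)$ forces diffeomorphic ends).
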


The set $I(M)$ is contained in the set $I_{\rm TOP}(M)$ of those 
$\sigma\in\wh(M)$ such that if $(W,M,N)$
is an invertible cobordism with $\tau(W,M)=\sigma$, then $N\ho M$. In all
cases where these sets are computed, they are equal, but it is 
not known whether $I(M)=I_{\rm TOP}(M)$ in general for a smooth manifold $M$ of dimension $\geq 5$, contrary to the claim in \cite{JaKwa}.  
However, there is a smaller set, $SI(M)$, of {\em strongly} inertial invertible cobordisms, which indeed is the 
same in the two categories.  This is the set of invertible cobordisms  $(W,j_M, j_N)$ such
that $j_M^{-1}\pcirc j_N$ is homotopic to a diffeomorphism (homeomorphism).
See \cite{JaKwaIN}. \par

The general question is intriguing, not the least because of the following
reformulation:\smallskip

\begin{Question}
Given two smooth manifolds $M$ and $N$ of dimension $\neq 4$ such that
$M\sdi N$ and $M\ho N$.  Is $M\di N$?
\end{Question}

The answer of the above question is ``infinitely no'' in dimension 4, even if $M$ and $N$ are simply connected
(see \secref{S.nfour}). It is ``yes'' in dimension 3 for orientable manifolds (see Theorem~C).

\begin{Examples}\label{IneqWh} \rm 
We start with examples where $I(M)\neq\wh(M)$. 

\sk{1}
(1) $I_{\rm TOP}(M)\neq \wh(M)$ for $M=L(7,1)\times S^4$ or $M=L(7,2)\times S^4$. Indeed, in 
1961, J.~Milnor \cite{MilnorTwoCom} showed that these two manifolds 
are invertibly cobordant but have not the same simple homotopy type 
(they are then not homeomorphic by Chapman's theorem \cite[Appendix]{CohenBook}).
Historically, this was the first example of this kind and
Milnor used it to produce the first counterexample to the Hauptvermutung 
for finite simplicial complexes \cite{MilnorTwoCom}.
 
(2) $I_{\rm TOP}(M)=0$ if $M$ is a lens space of dimension $\geq 5$  \cite[Corollary~12.13]{MilnorWT}.
This result was extended in \cite{KwasikSchultz} to generalized spherical spaceforms (see~\ref{spaceforms}).

(3) For $k\geq 3$, one has $I_{\rm TOP}(L(p,q)\times S^{2k})=0$ if $p\equiv 3 \ ({\rm mod} 4)$. Also, 
$I(L(5,1)\times S^{2k})=0$ but there exists a manifold $N$ $h$-cobordant to $L(5,1)\times S^{2k}$
such that $I(N)\neq 0$ (see \cite[\S\,6]{Hau2}). 

(4) Let $W$ be an invertible cobordism and consider its dual $\bar W$ (see \ref{Co.cobordism}).  
Then, $W\pcirc\bar W$ is an inertial invertible cobordism. 
By~\eqref{comphcob} and~\eqref{duality}, one has $\tau(W\cup \bar W,M)=\tau(W,M) + (-1)^n\overline{\tau(W,M)}$.
Therefore $\caln(M)=\{\tau+ (-1)^{n}\bar\tau\mid \tau\in\wh(M)\}\subset I(M)$. The subgroup $\caln(M)$
plays an important role in \secref{S.classif}. 

(5) Let $\pi$ be a finite group such that $\wh(\pi)$ is infinite. 
(For $\pi$ abelian, this is the case unless $\pi$ has exponent $2,3,4$ or $6$: see \cite{Bass}).
Then, in every odd dimension $\geq 5$, there are manifolds $M$ with fundamental group $\pi$
such that $I_{\rm TOP}(N)$ is finite for any manifold $N$ invertibly cobordant to $M$ (see \cite[Theorem~1.2 and its proof]{JaKwa}). Then there are infinitely 
many distinct homeomorphism classes of manifolds $\bbr$-diffeomorphic to $M$.
\end{Examples}

In view of \thref{T.n5gen}, the case $I(M)=\wh(M)$ is particularly interesting. 
The proof of the following proposition uses a standard technique to produce $h$-cobordisms,
going back to \cite[\S\,2]{MilnorTwoCom} and generalized  independently in \cite{Lawson1} and \cite{Hau1}.  

\begin{Proposition}
Let $K$ be a finite $2$-dimensional polyhedron with $\pi_1K$ finite abelian and let $n\geq 5$.
Let $E$ be a regular neighborhood of an embedding of $K$ in $\bbr^{n+1}$ and let $M=\partial E$.
Then $I(M) = \wh(M)$.
\end{Proposition}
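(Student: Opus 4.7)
Fix $\sigma\in\wh(M)$ and seek an invertible cobordism $(W,j_M,j_N)$ with $\calt(W,j_M)=\sigma$ and $N\di M$. A standard general-position argument (using $\dim K=2$ against $\dim M=n\ge 5$) gives $\pi_1(M)\cong\pi_1(E)=\pi_1(K)$, so $\wh(M)=\wh(\pi_1 K)$ is the Whitehead group of a finite abelian group. By \thref{T.hcob}(i), \emph{some} invertible cobordism already realizes $\sigma$; the content of the proposition is that one can arrange $N\di M$.

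Following \cite[\S 2]{MilnorTwoCom}, \cite{Lawson1} and \cite{Hau1}, the approach is to realize $\sigma$ geometrically by enlarging the spine $K$ inside $\bbr^{n+1}$. Represent $\sigma$ by a matrix $A\in GL_p(\bbz[\pi_1 K])$ and build a 2-complex $K^A\supseteq K$ by adjoining $p$ cancelling 1-2 cell pairs whose 2-cells are attached by words that read off the rows of $A$; the evident collapse $K^A\searrow K$ is then a homotopy equivalence of Whitehead torsion $\sigma$. Since $K$ sits in codimension $n+1-2\ge 4$, the new cells can be embedded in $\bbr^{n+1}$ disjointly from $E$ except along $K$. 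Taking $E^A$ to be a regular neighborhood of $K^A$ that contains $E$ and setting $W:=\overline{E^A\setminus E}$, the triad $(W,M,\partial E^A)$ is an invertible cobordism whose torsion is $\sigma$, as inherited from the collapse $K^A\searrow K$ via the induced handle decomposition of $W$.

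The remaining and delicate step is to show $\partial E^A\di M$. One cannot conclude this from $E^A\di E$, which in fact fails when $\sigma\ne 0$: the simple-homotopy types of $E^A$ and $E$ differ precisely by $\sigma$. Instead, one exploits the finite-abelian hypothesis on $\pi_1 K$---allowing $\sigma$ to be represented by units in the commutative ring $\bbz[\pi_1 K]$ modulo $SK_1$---to dualize the added handles of $W$ against the 2-handles of $E$ coming from the spine $K$. The resulting handle-trade, carried out inside the parallelizable manifold $E$, produces an explicit diffeomorphism $\partial E^A\fl{\approx}\partial E=M$, so $(W,M,\partial E^A)$ is inertial of torsion $\sigma$. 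I expect this boundary-trading argument to be the main obstacle: it depends crucially both on the commutative ring structure of $\bbz[\pi_1 K]$ (to write $\sigma$ in a form amenable to explicit handle moves) and on the embedded spine $K\subset\bbr^{n+1}$ (to provide the 2-dimensional handle structure of $E$ needed to absorb the new handles at the boundary level).
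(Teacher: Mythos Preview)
Your setup parallels the paper's, but the step you flag as delicate---showing $\partial E^A\di M$---is left as a genuine gap, and the mechanism you propose does not close it. The ``handle-trade'' you sketch is not an argument: the added handles of $W$ have indices $1$ and $2$ from the $M$ side, and there is no evident way the $2$-handles of $E$ (which are unrelated to the matrix $A$) absorb them at the boundary level. Representing $\sigma$ by units in the commutative ring $\bbz[\pi_1K]$ controls only the algebra of the handle matrix of $W$, which you already know; it does not by itself yield a diffeomorphism of boundaries. (A smaller point: the map $K^A\to K$ is not a collapse---the added $2$-cells have no free face---but merely a deformation retraction of torsion $\sigma$; a genuine collapse would have $\tau=0$ and the construction would be vacuous.)

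The paper sidesteps the difficulty by a different device. Rather than enlarging $K$ to a new complex $K^A$, it keeps $K$ fixed and \emph{re-embeds} it: given a self-homotopy equivalence $f\:K\to K$, the composite $i\pcirc f$ is homotoped to an embedding $j_f\:K\hookrightarrow E$, one takes $V_f$ a regular neighborhood of $j_f(K)$ in $E$, and sets $W_f=E\setminus\ind V_f$. The torsion of $(W_f,\partial V_f,M)$ is $\tau(f)$ (as in \cite{Hau1,Lawson1}), and the inertial property is now free: $j_f$ and $i$ are embeddings of the \emph{same} $2$-complex into $\bbr^{n+1}$, hence isotopic in codimension $\ge 4$, so $V_f\di E$ and thus $\partial V_f\di M$. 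The finite-abelian hypothesis enters only through Latiolais's theorem \cite{Latiolais}: every $\sigma\in\wh(\pi_1K)$ is $\tau(f)$ for some self-homotopy equivalence $f$ of $K$. Your construction realizes $\sigma$ via a map between \emph{different} complexes $K$ and $K^A$, forfeiting the isotopy argument; indeed, the most natural way to rescue your approach would be to show $K^A$ and $K$ are simple-homotopy equivalent---which again requires Latiolais, and then one may as well argue as the paper does.
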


\begin{proof}
Let $i\:K\to E$ be the natural inclusion and let $f\:K\to K$ be a homotopy equivalence 
with homotopy inverse $\varphi$. 
Then, $i\pcirc f$ is homotopic to an embedding $j_f\:K\to E$. Let $V_f$ be a regular neighborhood of $j_f(K)$
in $E$ and let $W_f=E-{\rm int\,}V_f$. Doing the same construction in $V_f$ with $j_f\pcirc\varphi$,
and another time using again $f$, shows that $(W_f,\partial V_f,M)$ is an invertible cobordism.
The torsion of $W_f$ is related to $\tau(f)$, via natural
identifications of fundamental groups (see \cite[proof of Proposition~1.1]{Hau1} or \cite[Proposition~3]{Lawson1}). 
As $j_f$ is isotopic to $i$ in $\bbr^{n+1}$, one has $E\di V_f$, thus $W_f$ is inertial.
By \cite[Theorem 1]{Latiolais}, every element of $\wh(\pi_1K)$ is realizable as the torsion of
a self homotopy equivalence of $K$. This proves that $I(M) = \wh(M)$.
\end{proof}

In the even case, this result has a vast generalization, as a consequence of the following proposition.

\begin{Proposition}\label{P.sigbar}
Let $M$ be a smooth connected closed manifold of dimension $n\geq 5$. Let $\sigma\in\wh(M)$ such that $\sigma=(-1)^n\bar\sigma$. 
Then $\sigma\in I(M)$.  
\end{Proposition}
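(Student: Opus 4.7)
The plan is to realize $\sigma$ as the Whitehead torsion of an \emph{inertial} invertible cobordism on $M$.

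By \thref{T.hcob}(i), pick an invertible cobordism $(W, j_M, j_N)$ with $\calt(W, j_M) = \sigma$; let $h\:N\to M$ be the natural homotopy equivalence. The duality formula \eqref{duality2} combined with the hypothesis gives
\[
h_*(\tau(W, j_N)) \,=\, (-1)^n\,\overline{\tau(W,j_M)} \,=\, (-1)^n\bar\sigma \,=\, \sigma,
\]
so $W$ carries the \emph{same} torsion $\sigma$ when viewed from either end (after identifying $\wh(N)$ with $\wh(M)$ via $h_*$). This built-in self-dual symmetry of $W$ is what must be turned into a diffeomorphism $N \di M$. Note that the naive doubling $W \pcirc \bar W$ from \exref{IneqWh}(4) only produces $2\sigma \in I(M)$, so a finer construction is needed.

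The proposed approach is handle-theoretic. By \lemref{L.simformCob}, choose for some $2\leq r \leq n-2$ a decomposition $W = W_r \pcirc W_{r+1}$ with $p$ handles of index $r$ followed by $p$ handles of index $r+1$, so that $\sigma$ is represented by an invertible intersection matrix $A\in GL_p(\bbz\pi_1M)$. The dual handle decomposition of $W$ viewed from $j_N$ (``$W$ upside down'') is of the same type with intersection matrix a representative of $(-1)^n\bar A^T$, and by the calculation above this represents the same class $\sigma\in \wh(\pi_1M)$. The aim is to modify the handle decomposition---by handle slides, handle cancellations, and if necessary stabilizations, all geometrically realizable since $n \geq 5$---so that the intersection matrix becomes literally self-adjoint, $A = (-1)^n\bar A^T$. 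With such a symmetric choice of handle data, the cobordism $W$ admits a self-diffeomorphism exchanging its two boundary components, whose boundary restriction is the required diffeomorphism $N \di M$; then $W$ is inertial and $\sigma \in I(M)$.

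The hard part is exactly this symmetric realization step: passing from the abstract identity $[A] = [(-1)^n\bar A^T]$ in $\wh(\pi_1 M)$ to an honestly self-adjoint matrix representative, together with matching attaching data, orientations, and framings for the corresponding handles. This is a K-theoretic symmetrization problem, sensitive to the Poincar\'e-duality involution on the group ring. An alternative route that runs into essentially the same difficulty would be to construct a homotopy self-equivalence $g\:M\to M$ with $\tau(g) = \sigma$ directly---the hypothesis $\sigma = (-1)^n\bar\sigma$ is precisely the Poincar\'e-duality constraint on such torsions---and then thicken $C_g$ to a smooth inertial h-cobordism on $M$ via \thref{T.hcob}(i); realizing $g$ concretely is what requires the self-duality of $\sigma$.
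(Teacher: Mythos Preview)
Your proposal is not a proof: you explicitly flag the ``symmetric realization step'' as the hard part and then leave it undone. That step is the entire content of the proposition, so what remains is only a restatement of the goal. Moreover, the specific strategy you outline cannot succeed as written. Even granting a self-adjoint matrix representative, a handle decomposition carries geometric attaching data (framings, isotopy classes of cores) far beyond the intersection matrix, and you give no mechanism for producing an honest involution of $W$ from matrix symmetry alone. Worse, for odd $n$ the handle indices obstruct the approach outright: if $W=W_r\pcirc W_{r+1}$ has $r$- and $(r{+}1)$-handles when read from $M$, then read from $N$ it has $(n{-}r)$- and $(n{+}1{-}r)$-handles, and these two index sets coincide only when $n=2r$. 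For odd $n$ no choice of $r$ makes the upside-down decomposition of the same shape, so ``symmetric handle data'' in your sense cannot exist, yet the proposition covers all $n\ge 5$. Your alternative route via a self-homotopy-equivalence $g\:M\to M$ with $\tau(g)=\sigma$ is likewise only a reformulation: you neither construct $g$ nor explain how to thicken $C_g$ to a \emph{smooth} cobordism.

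The paper's argument is completely different and sidesteps these issues. One first \emph{localizes}: embed a $2$-complex $K\hookrightarrow M$ carrying $\pi_1$, with regular neighborhood $A$ and complement $B$, and realize $\sigma$ by a relative invertible cobordism $(V,A,A')$; then $W=V\cup(B\times I)$ has $\tau(W,M)=\sigma$ and other end $M'=A'\cup B$. The hypothesis $\sigma=(-1)^n\bar\sigma$ is used not to symmetrize $W$ but to build an auxiliary invertible cobordism $T$ on $\partial A$ with torsion $\sigma$ satisfying $T^{-1}=\bar T$, so that $A\pcirc T\pcirc\bar T\di A$ rel $\partial$. Re-reading $V$ as a cobordism from $C=A\pcirc T$ to $A'\pcirc T$, a torsion calculation gives $\tau(V,C)=0$; the $s$-cobordism theorem then yields $A\pcirc T\di A'\pcirc T$ rel $\partial$, hence $A'\di A$ rel $\partial$, and extending by $\id_B$ gives $M'\di M$. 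The key idea is to \emph{absorb} $\sigma$ into a boundary collar and cancel it with the $s$-cobordism theorem, rather than to look for any symmetry of $W$ itself.
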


\begin{proof}
Let $i\:K\to M$ be an embedding of a finite connected 2-dimensional complex $K$ into $M$ such that $\pi_1i\:\pi_1(K)\to \pi_1(M)$
is an isomorphism, which we use to measure Whitehead torsions in $\pi_1(K)$. 
Let $A$ be a regular neighborhood of $i(K)$ and let $B=M-{\rm int}A$. 

Let $(V,A,A')$ be an invertible cobordism relative boundary with $\tau(V,A)=\sigma$. Then, $W=V\cup(B\times I)$ is an invertible
cobordism from $M$ to $M'=A'\cup(B\times \{1\})$ with $\tau(W,M)=\sigma$. \par

Since $\dim M\geq 5$ and $\mathrm{codim\ } K\geq 3$,  we have 
$\dim \partial A\geq 4$ and 
$\pi_1\partial A =\pi_1 A$.  Then, by Theorem 3.11 and Lemma 5.6, there 
also exists  an
invertible cobordism $T\in \calb{(\partial A)}$ with Whitehead torsion $\sigma$.
The condition $\sigma=(-1)^n\bar\sigma$ now means that $T^{-1}=\bar T$, and
$A\pcirc T\pcirc\bar T\di A$, rel $\partial$. 

Let $C=A\pcirc T$. Then we may also consider $V$ as an h-cobordism from $C$ to
$A'\pcirc T$, and computing the torsion of the inclusion $K\subset V$
two ways, we see that $\tau(V,C)=0$.  By the s-cobordism theorem 
we conclude that
$C\di A'\pcirc T$ rel $\partial$, and hence $A'\di A$ rel $\partial$, 
since $T$ is invertible. Extending
this diffeomorphism by the identity on $B$, we see that $M'\di M$. 
\end{proof}

\begin{Remark} \rm When $\sigma\neq (-1)^n\bar\sigma$, it is still possible that $M'\di M$,
as seen above; simply, the diffeomorphism from $M'$ to $M$ is not relative $B$.
\end{Remark}

When $M$ is orientable with $\pi_1M$ finite abelian, then $\bar\sigma=\sigma$ for all $\sigma\in\wh(M)$ \cite{Bak},
hence we have the following corollary of \proref{P.sigbar}.

\begin{Corollary}
Let $M$ be a connected orientable closed manifold of even dimension $\geq 6$
such that $\pi_1M$ finite abelian. Then $I(M)=\wh(M)$. \mancqfd
\end{Corollary}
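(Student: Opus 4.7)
The plan is to derive this directly from Proposition \ref{P.sigbar} by checking that every element of $\wh(M)$ satisfies the hypothesis $\sigma=(-1)^n\bar\sigma$ under the present assumptions.

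First, since $M$ has even dimension $n\geq 6$, the sign $(-1)^n$ is $+1$, so the hypothesis of Proposition \ref{P.sigbar} reduces to $\sigma=\bar\sigma$, i.e.\ the involution on $\wh(M)$ is trivial. Next, because $M$ is orientable, the orientation character $\omega\:\pi_1M\to\{\pm 1\}$ is trivial, and the involution \eqref{defInvol} on $\wh(M)=\wh(\pi_1M)$ is the one induced by $a\mapsto a^{-1}$ on the group ring $\bbz\pi_1M$. The hypothesis that $\pi_1M$ is finite abelian then lets us invoke the theorem of Bak \cite{Bak}, which states precisely that for such a group this involution acts trivially on $\wh(\pi_1M)$. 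Hence $\bar\sigma=\sigma$ for every $\sigma\in\wh(M)$.

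Combining these observations, every $\sigma\in\wh(M)$ satisfies the hypothesis of Proposition \ref{P.sigbar}, so $\sigma\in I(M)$. Since $I(M)\subseteq\wh(M)$ by definition, we conclude $I(M)=\wh(M)$.

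There is essentially no obstacle: the argument is a one-line citation of Bak's theorem plugged into Proposition \ref{P.sigbar}, with the parity of $n$ doing the rest. The only subtle point worth highlighting is that both inputs are necessary — orientability is what makes the involution the naive one $a\mapsto a^{-1}$, and finite abelian-ness is what makes this naive involution act trivially on the Whitehead group.
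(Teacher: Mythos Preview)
Your proof is correct and is essentially identical to the paper's own argument: the paper also derives the corollary immediately from \proref{P.sigbar} by citing Bak's theorem \cite{Bak} to get $\bar\sigma=\sigma$ when $M$ is orientable with $\pi_1M$ finite abelian, and then noting that even $n$ makes the sign $(-1)^n=+1$. Your added remark that orientability is what reduces the involution to the naive one is a helpful clarification, but the logical content is the same.
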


In the case when $\pi_1(M)$ is finite cyclic, this was first proved in
 \cite[Cor. 1]{Lawson1}\sk{2}

We also mention another corollary of \proref{P.sigbar}, which essentially 
amounts to a curious reformulation. Let $(W,M,N)$ be an invertible cobordism with 
Whitehead torsion $\sigma = \tau(W,M)$, and 
let $h:N\to M$ be a natural homotopy equivalence associated to $W$. It follows 
easily from the composition and duality formulae (\ref{comtors1}) and
(\ref{duality}) that  
$\tau(h) = - \sigma +(-1)^n\bar\sigma$.
Hence we see that $h$ is a simple homotopy equivalence if and only if
$ \sigma = (-1)^n\bar\sigma$.

\begin{Corollary}
If the natural homotopy equivalence defined by the invertible cobordism $(W,M,N)$ is
simple, then $(W,M,N)$ is inertial.
\end{Corollary}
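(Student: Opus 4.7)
The plan is to combine the torsion formula $\tau(h)=-\sigma+(-1)^n\bar\sigma$ stated in the remark preceding the corollary with Proposition~\ref{P.sigbar}, and then invoke the uniqueness part of Theorem~\ref{T.hcob} to upgrade the existence statement of Proposition~\ref{P.sigbar} into a statement about the specific cobordism $W$.

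More precisely, set $\sigma=\tau(W,M)\in\wh(M)$. Assuming $h$ is simple, the identity $\tau(h)=-\sigma+(-1)^n\bar\sigma$ forces $\sigma=(-1)^n\bar\sigma$. By Proposition~\ref{P.sigbar} (applied in dimension $n\ge 5$, which is the standing hypothesis of Section~\ref{S.nfive}), we have $\sigma\in I(M)$: there exists an inertial invertible cobordism $(W',j'_M,j'_N)$ starting from $M$ with $\tau(W',j'_M)=\sigma$ and $N'\di M$.

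Now I would use Theorem~\ref{T.hcob}(i), which says that the torsion map $\calt\:\calb(M)\to\wh(M)$ is a \emph{bijection}. Since $\calt([W,j_M[)=\sigma=\calt([W',j'_M[)$, injectivity gives $[W,j_M[\,=\,[W',j'_M[$ in $\calb(M)$. By the definition of $\calb(M)$, there is a diffeomorphism $F\:W\to W'$ with $F\pcirc j_M=j'_M$; such an $F$ automatically restricts to a diffeomorphism of the complementary boundary components, so $N\di N'$. Combined with $N'\di M$ this yields $N\di M$, that is, $(W,M,N)$ is inertial.

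There is no real obstacle beyond being careful about identifications. The subtle point to verify is the torsion formula $\tau(h)=-\sigma+(-1)^n\bar\sigma$, which follows from writing $h$ as the composition of $j_N\:N\to W$ with a homotopy inverse of $j_M\:M\to W$, then applying the composition formula~\eqref{comtors1} together with the duality formula~\eqref{duality2}; the author has essentially already done this in the remark preceding the statement, so I would simply cite it. The rest is a clean application of Proposition~\ref{P.sigbar} and the uniqueness half of the s-cobordism theorem.
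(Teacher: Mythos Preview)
Your proof is correct and follows the same approach the paper intends: the corollary is stated without explicit proof, being presented as an immediate consequence of the preceding remark (which gives $\tau(h)=-\sigma+(-1)^n\bar\sigma$, hence $h$ simple $\iff \sigma=(-1)^n\bar\sigma$) together with \proref{P.sigbar}. Your additional care in invoking the injectivity of $\calt$ from \thref{T.hcob}(i) to pass from ``$\sigma\in I(M)$'' to ``the specific $W$ is inertial'' is a point the paper leaves implicit, but it is indeed needed and your justification is exactly right.
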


But note that $h$ may not itself be homotopic to a homeomorphism! 
A counterexample is given in \cite[Example 6.4]{JaKwa}.\medskip

Finally, we describe how to get inertial invertible cobordisms by 
``stabilization'' (up to connected sums with $S^r\times S^{n-r}$).
First, a few words about connected sums. 
Since we do not worry about orientations, the diffeomorphism type $M_1\,\sharp\,M_2$ may depend on the choice
of embeddings $\beta_i\:D^n\to M_i$ (see e.g. \cite[\S\,4.2.3]{HauBook}). This will not bother us because
our manifold $M_2$ (like $S^r\times S^{n-r}$) 
admits an orientation reversing diffeomorphism. The same holds true for \dfn{cobordism connected sum}
$W_1\,\sharp\,W_2$, obtained using embeddings $\beta_i\:(D^n\times I,D^n\times \{0\},D^n\times \{1\})
\to (W_i,M_i,N_i)$. 

\begin{Proposition}{{\rm (\cite{Hatcher-Lawson}, compare~\ref{cstab})}}\label{P.HL}
Let $M$ be a smooth connected closed manifold of dimension $n\geq 5$. Let $(W,M,N))$ be an invertible cobordism
such that $\tau(W,M)$ is represented by a matrix in $GL_p(\bbz\pi_1M)$. Then, for $2\leq r\leq n+2$, 
$$
M\,\sharp\, p(S^r\times S^{n-r}) \di N\,\sharp\, p(S^r\times S^{n-r}) \, .
$$
Consequently, the cobordism $W\,\sharp\, p(S^r\times  S^{n-r}\times I)$ is an inertial invertible cobordism. 
\end{Proposition}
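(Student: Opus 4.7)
My plan follows the approach of Hatcher--Lawson \cite{Hatcher-Lawson}. The starting point is \lemref{L.simformCob}, read with the natural hypothesis $2\leq r\leq n-2$: we arrange $W$ so that as a cobordism on $M$ it consists of $p$ handles of index $r$ followed by $p$ handles of index $r+1$. The Whitehead torsion $\tau(W,M)$ is then represented, up to sign, by the $p\times p$ matrix $A$ of the cellular boundary $\partial\:C_{r+1}(\tilde W,\tilde M)\to C_r(\tilde W,\tilde M)$ with respect to fixed lifts of the handles.

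Geometrically, $M\,\sharp\, p(S^r\times S^{n-r})$ is $M$ with $p$ trivially embedded $r$-handles attached followed by $p$ complementary $(n-r)$-handles, since $S^r\times S^{n-r}$ minus an open disk is a relative handlebody on $S^{n-1}$ with one $r$-handle and one $(n-r)$-handle. The stabilized cobordism $W\,\sharp\, p(S^r\times S^{n-r}\times I)$ can be constructed by first building $M\,\sharp\, p(S^r\times S^{n-r})$ from $M$ in this fashion, and then attaching the $r$- and $(r+1)$-handles of $W$, pushed inside the connected-sum tube. The resulting decomposition has $2p$ available $r$-handles and $p$ $(r+1)$-handles, and the boundary matrix of the $(r+1)$-handles with respect to the $2p$ $r$-handles is initially of block form $(A\mid 0)$, since the attaching spheres of the original $(r+1)$-handles do not meet the trivial new $r$-handles.

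The heart of the proof is to reduce this matrix to a geometrically cancellable form by handle slides. Using the freedom supplied by the $p$ new trivial $r$-handles, we slide the attaching $r$-spheres of the $p$ $(r+1)$-handles across the cocores of the two families of $r$-handles to bring the matrix into the form $(0\mid I_p)$; this is where the Whitehead-lemma identity $A\oplus A^{-1}\in E_{2p}(\bbz\pi_1M)$ enters, as it guarantees that a sequence of row and column operations (realized geometrically as $(r+1)$-handle/$(r+1)$-handle slides and $r$-handle/$r$-handle slides of both families) achieves this block reduction. Each $(r+1)$-handle then has a single transverse intersection with a unique trivial $r$-handle, so the $p$ cancelling pairs can be removed by the first cancellation theorem. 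What survives is a cobordism built on $M\,\sharp\, p(S^r\times S^{n-r})$ out of the $p$ original $r$-handles of $W$ and the $p$ trivial $(n-r)$-handles of the stabilization, and applying the dual argument from the $N$-end shows this residual cobordism assembles into a product. We conclude $M\,\sharp\, p(S^r\times S^{n-r})\di N\,\sharp\, p(S^r\times S^{n-r})$, and the ``consequently'' clause is immediate since this is the defining property of an inertial invertible cobordism.

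The main obstacle is making the handle slides rigorous: I must certify that each algebraic elementary operation can be realized as an ambient isotopy of attaching spheres, in general position and without creating stray intersections, and that the final geometric cancellation produces a product. This is where the dimension hypotheses $r\geq 2$ and $n-r\geq 3$ (enforced by $2\leq r\leq n-2$ and $n\geq 5$) are indispensable, as they permit both the Whitney trick for removing excess intersections and the standard piping arguments on attaching spheres during slides.
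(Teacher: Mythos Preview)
Your approach diverges from the paper's and contains a genuine gap.

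The paper's argument is direct and never touches the stabilized cobordism until the ``consequently'' clause. From the decomposition $W=W_r\pcirc W_{r+1}$ of \lemref{L.simformCob}, the key observation (which the paper attributes to \cite{Hatcher-Lawson}) is that the $p$ index-$r$ handles of $W_r$ are \emph{trivially attached} to $M$: each attaching embedding $S^{r-1}\times D^{n+1-r}\hookrightarrow M$ factors through a coordinate disk $D^n\subset M$. Attaching a trivial $(n{+}1)$-dimensional $r$-handle to $M\times I$ produces a cobordism from $M$ to $M\,\sharp\,(S^r\times S^{n-r})$, so the middle level satisfies $M_r\di M\,\sharp\,p(S^r\times S^{n-r})$. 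Reading $\bar W_{r+1}$ from $N$, its $(n{-}r)$-handles are likewise trivially attached, giving $M_r\di N\,\sharp\,p(S^r\times S^{n-r})$. That is the whole proof: no handle slides, no Whitehead-lemma identity, no manipulation of the stabilized cobordism.

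Your argument has two problems. First, the handle count is off. As a cobordism on $M\,\sharp\,p(S^r\times S^{n-r})$, the stabilization $W\,\sharp\,p(S^r\times S^{n-r}\times I)$ has exactly $p$ index-$r$ handles and $p$ index-$(r{+}1)$ handles, inherited from $W$; the product piece $S^r\times S^{n-r}\times I$ contributes none. When you ``build $M\,\sharp\,p(S^r\times S^{n-r})$ from $M$'' with an $r$-handle and an $(n{-}r)$-handle, those are $n$-dimensional handles of a relative handlebody on $S^{n-1}$, not $(n{+}1)$-dimensional handles of the cobordism, and they cannot participate in cobordism handle slides or cancellations. There are not $2p$ available $r$-handles.

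Second, and decisively, your endgame is impossible. You aim to reduce the stabilized cobordism to a product via slides and cancellations, but its Whitehead torsion equals $\tau(W,M)$ under the canonical identification $\wh(M)\approx\wh\big(M\,\sharp\,p(S^r\times S^{n-r})\big)$, and this is nonzero in general. The proposition only asserts that the stabilized cobordism is \emph{inertial} (its two ends are abstractly diffeomorphic), not that it is trivial; these are very different when $\wh(\pi_1M)\neq 0$. Any argument whose conclusion is ``the residual cobordism assembles into a product'' must therefore fail.
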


\begin{proof} 
One uses a simplified handle decomposition $W = W_r \pcirc W_{r+1}$ like in 
\lemref{L.simformCob}, together with the remark of \cite{Hatcher-Lawson} 
that the $r$-handles of $(W_r,M,M_r)$ are attached trivially, meaning that the attaching embedding factors through the standard embedding of 
$S^{r-1}\times D^{n+1-r}$ into $\bbr^n$.
This implies that $M_r\di M\,\sharp\, p(S^r\times S^{n-r})$. The same holds true for
the $(n-r)$-handles of $(\bar W_{r+1},N,M_r)$, thus 
$M\,\sharp\, p(S^{r}\times S^{n-r})$. For details, see~\cite{Hatcher-Lawson}. 
\end{proof}

Combined with \proref{P.invcob}, this gives an interesting 
relation between two kinds of stabilization: 

\begin{Corollary}\label{sum.stab}
Let $M$ and $N$ be closed smooth manifolds of dimensions $\geq 5$
which are $\bbr$-diffeomorphic. 
Then  there exists an integer $p$ such that
$M \,\sharp\, p(S^{r}\times S^{n-r})\di N\,\sharp\, p(S^{r}\times S^{n-r})$
for any $r$ such that $2\leq r\leq n-2$.  
If $\pi_1(M)$ is finite, $p$ may be chosen to be less than or equal to 2. 
\end{Corollary}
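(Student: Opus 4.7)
The plan is to combine \proref{P.invcob} with \proref{P.HL}, using only that $K_1$ of a ring has every class represented by a matrix of some finite size.

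First, since $M \sdi N$, \proref{P.invcob} gives an invertible cobordism $(W, M, N)$. By \thref{T.hcob}, this cobordism has a well-defined Whitehead torsion $\sigma = \tau(W,M) \in \wh(\pi_1 M)$. By the very definition \eqref{defWh} of the Whitehead group, $\sigma$ is represented by some invertible matrix in $GL_p(\bbz \pi_1 M)$ for a suitable integer $p$ (depending only on $\sigma$, and in particular independent of $r$). Applying \proref{P.HL} with this $p$ then yields
\[
M \,\sharp\, p(S^r \times S^{n-r}) \di N \,\sharp\, p(S^r \times S^{n-r})
\]
simultaneously for every $r$ with $2 \leq r \leq n-2$, which is the first assertion.

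For the second assertion, suppose $\pi_1(M)$ is finite. Then $\bbz[\pi_1 M]$ is finitely generated as a $\bbz$-module, and in particular it is a Noetherian ring of Krull dimension $1$. Bass's stability theorem for $K_1$ of such rings implies that the natural map $GL_2(\bbz[\pi_1 M]) \to K_1(\bbz[\pi_1 M])$ is surjective, and hence so is the induced map into $\wh(\pi_1 M)$. Thus every torsion element $\sigma$ can be represented by a $2 \times 2$ invertible matrix, so one may take $p \leq 2$ in the construction above.

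The main obstacle is essentially invisible: once \proref{P.invcob} and \proref{P.HL} are in place, the first statement reduces to the definitional fact that any element of $\wh(\pi_1 M)$ lives in the image of some $GL_p$. The only non-trivial ingredient is the bound $p \leq 2$ in the finite fundamental group case, which requires invoking a classical stability result (Bass) for $K_1$ of rings of finite Krull dimension rather than anything from the cobordism machinery of this paper.
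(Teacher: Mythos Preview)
Your proof is correct and follows the same route as the paper: \proref{P.invcob} produces an invertible cobordism, its torsion is represented in some $GL_p$ by definition, and \proref{P.HL} finishes the first assertion. For the bound $p\leq 2$ the paper cites Vaser{\v{s}}te{\u\i}n \cite{Vas} rather than Bass; one small caveat in your phrasing is that $\bbz[\pi_1M]$ is generally non-commutative, so the precise hypothesis feeding into the stability theorem is that it is module-finite over the commutative ring $\bbz$ of Krull dimension~$1$, not that it itself is a (commutative) Noetherian ring of Krull dimension~$1$.
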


\begin{proof} The last statement follows since  
$GL_2(\bbz\pi)\to\wh(\pi)$ is surjective if $\pi$ is a finite group \cite{Vas}.
Note that $p$ can not always be chosen to be 1 (see \cite[Theorem~1.1]{JaKwa}).
\end{proof}

 An intriguing
question is if there is some kind of converse to this result. 
  A very special case is given by Lemma 4.1 in \cite{JaKwa}\sk{2}
 
By \thref{T.hcob}, an invertible cobordism $X$ starting from $Y=M\,\sharp\, p(S^r\times S^{n-r})$
is of the form $W\,\sharp\, p(S^r\times  S^{n-r}\times I)$ where $W$ is an invertible cobordism
starting from $M$ with $\tau(X,Y)=\tau(W,M)$. Using \proref{P.HL}, this proves the following

\begin{Corollary}
Let $M$ be a smooth connected closed manifold of dimension $n\geq 5$.
Suppose that $GL_p(\bbz\pi_1M) \to \wh(\pi_1M)$ is surjective.
Then, for any $2\leq r\leq n+2$, one has $I(M\,\sharp\, p(S^r\times S^{n-r}))=\wh(M)$. 
\mancqfd
\end{Corollary}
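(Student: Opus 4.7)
The plan is to show that every element $\sigma\in\wh(\pi_1Y)$ lies in $I(Y)$, where $Y=M\,\sharp\,p(S^r\times S^{n-r})$. Note first that the range $2\leq r\leq n-2$ makes $S^r\times S^{n-r}$ simply connected, so the inclusion $M\hookrightarrow Y$ induces a canonical isomorphism $\wh(\pi_1 M)\fl{\approx}\wh(\pi_1 Y)$, which I will use silently to identify $\wh(M)=\wh(Y)$.

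Fix $\sigma\in\wh(M)$. By the surjectivity hypothesis, $\sigma$ is represented by a matrix in $GL_p(\bbz\pi_1M)$. By \thref{T.hcob}(i) the map $\calt\:\calb(M)\to\wh(M)$ is a bijection, so I can produce an invertible cobordism $(W,M,N)$ with $\tau(W,M)=\sigma$. Applying \proref{P.HL} to this $W$, the cobordism
$$
X:=W\,\sharp\,p(S^r\times S^{n-r}\times I)
$$
is an inertial invertible cobordism starting from $Y$ (with other end diffeomorphic to $Y$). Granting the torsion equality $\tau(X,Y)=\sigma$, this shows $\sigma\in I(Y)$, and since $\sigma$ was arbitrary, $I(Y)=\wh(Y)$.

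The remaining step, and the main obstacle, is to verify $\tau(X,Y)=\sigma$. My plan is to perform the cobordism connected sum along embeddings of $D^n\times I$ placed in the interior of $W$, away from a simplified handle decomposition $W=W_r\pcirc W_{r+1}$ of $W$ furnished by \lemref{L.simformCob} (and similarly away from the attaching regions of the standard handle decomposition of $p(S^r\times S^{n-r}\times I)$ as a trivial product cobordism). Then $X$ retracts onto a relative CW-pair $(X',Y)$ whose cellular chain complex over $\bbz\pi_1Y=\bbz\pi_1M$ decomposes as a direct sum of the simplified complex coming from $(W,M)$ and trivial (cancelling) complexes coming from the $p$ copies of $S^r\times S^{n-r}\times I$. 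Under the canonical identification $\wh(\pi_1Y)\cong\wh(\pi_1M)$ the resulting boundary matrix is therefore block-diagonal, with the $W$-block representing $\sigma$ and the other blocks representing $0$, giving $\tau(X,Y)=\sigma$.

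Alternatively — and this is essentially the argument sketched by the authors in the paragraph preceding the statement — one can invoke \thref{T.hcob}(i) in the opposite direction: by bijectivity of $\calt\:\calb(Y)\to \wh(Y)$, any invertible cobordism $X$ starting from $Y$ with $\tau(X,Y)=\sigma$ must be equivalent in $\calb(Y)$ to $W\,\sharp\,p(S^r\times S^{n-r}\times I)$ provided we can identify torsions. Since $\calb$-equivalent invertible cobordisms have diffeomorphic other ends, and $W\,\sharp\,p(S^r\times S^{n-r}\times I)$ is inertial by \proref{P.HL}, so is $X$. Either way, the heart of the argument is the preservation of Whitehead torsion under cobordism connected sum with a trivial cobordism.
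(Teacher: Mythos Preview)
Your proof is correct and follows essentially the same approach as the paper. The paper's argument is contained in the short paragraph immediately preceding the corollary: given any $\sigma\in\wh(Y)$, one realizes it (via \thref{T.hcob}) as $\tau(W,M)$ for an invertible cobordism $W$ from $M$, forms $X=W\,\sharp\,p(S^r\times S^{n-r}\times I)$, observes that $\tau(X,Y)=\tau(W,M)$, and invokes \proref{P.HL} together with the surjectivity hypothesis to conclude that $X$ is inertial. You have supplied the justification for the torsion equality $\tau(X,Y)=\tau(W,M)$ (via the block-diagonal description of the boundary matrix after placing the connected-sum tubes away from all handles), which the paper states without comment; otherwise the arguments coincide.
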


\section{The case $n=4$}\label{S.nfour}

A group $\pi$ is called \dfn{poly-(finite or cyclic)} if it admits an ascending sequence of 
subgroups, each normal in the next, with successive quotients either finite or cyclic
(this is equivalent to $\pi$ being virtually polycyclic: see \cite[Theorem~2.6]{Wehrfritz}).
We first prove the following theorem which implies part~(ii) of Theorem~A.

\begin{Theorem}\label{T.n4}
Let $M$ and $N$ be smooth connected closed manifolds of dimension $4$ such that 
$N\sdi M$. Suppose that $\pi_1M$ is poly-(finite or cyclic) and that $\wh(M)=0$. Then $N\ho M$. 
\end{Theorem}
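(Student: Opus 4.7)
The plan is to combine the invertible-cobordism reformulation from \proref{P.invcob} with the four-dimensional \emph{topological} s-cobordism theorem of Freedman--Quinn. Since the conclusion is only $N\ho M$, we need not stay in the smooth category once we have produced an s-cobordism.

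First, by \proref{P.invcob}, the hypothesis $N\sdi M$ produces an invertible cobordism $(W,j_M,j_N)$ from $M$ to $N$. By \proref{P.ihcob} this cobordism is an h-cobordism, and the assumption $\wh(M)=0$ forces the Whitehead torsion $\calt(W,j_M)\in\wh(M)$ to vanish, so $W$ is a five-dimensional (smooth) s-cobordism between the two 4-manifolds $M$ and $N$.

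The key step is to apply the topological s-cobordism theorem of Freedman--Quinn, which is valid for 4-dimensional s-cobordisms whose fundamental group is \emph{good} in the sense of Freedman--Quinn (i.e.\ those groups for which topological surgery and the disk embedding theorem are available in dimension~4). The crux is to know that every poly-(finite or cyclic) group is good. Such groups are virtually polycyclic, hence elementary amenable, and successive extensions of Freedman--Quinn's original result --- by Freedman--Teichner from polynomial to subexponential growth, and subsequent work through Krushkal--Quinn and others --- show that the class of good groups contains all elementary amenable groups, and in particular all poly-(finite or cyclic) groups.

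Granting this, the topological s-cobordism theorem produces a homeomorphism $W\ho M\times I$ relative to $M$, and restriction to the opposite end yields a homeomorphism $N\ho M$. The main obstacle is precisely the invocation of this deep 4-dimensional topological machinery: smooth handle cancellation fails in dimension~$4$, so the argument cannot be upgraded to a diffeomorphism conclusion, and even the topological conclusion is conditional on $\pi_1M$ being a good group --- which is exactly the reason the hypothesis is phrased in terms of poly-(finite or cyclic) groups rather than, say, arbitrary finitely presented groups with vanishing Whitehead group.
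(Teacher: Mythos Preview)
Your proof is correct and follows the same route as the paper: \proref{P.invcob} gives an invertible cobordism, \proref{P.ihcob} makes it an h-cobordism, $\wh(M)=0$ makes it an s-cobordism, and Freedman--Quinn's topological s-cobordism theorem in dimension~4 finishes the job. The one place where you diverge is in justifying that poly-(finite or cyclic) groups are good: the paper simply cites Freedman--Quinn directly (their Theorem~7.1A and the disk embedding theorem), whereas you route through elementary amenability and later extensions. That detour is unnecessary and slightly overclaims---it is not established that \emph{all} elementary amenable groups are good; rather, poly-(finite or cyclic) groups are precisely the finitely presented groups covered by the original Freedman--Quinn machinery, which is why the hypothesis is stated that way.
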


\begin{proof}
By \proref{P.invcob}, there is an invertible cobordism $W$ from $M$ to~$N$. 
Then $W$ is an h-cobordism by \proref{P.ihcob} and, as $\wh(M)=0$, it is an s-cobordism. 
The topological s-cobordism theorem in dimension $4$ holds for closed manifold
with poly-(finite or cyclic) fundamental group \cite[Theorem~7.1A and the Embedding theorem p.~5]{Freedman-Quinn-Book}.
Therefore, $W\ho M\times I \ (\rel M)$ and then $N\ho M$. 
\end{proof}

\begin{Example}\rm 
By \cite{Farrell-Hsiang}, $\wh(M)=0$ when $\pi_1M$ is poly-(finite or cyclic) and torsion-free. 
By \thref{T.n4}, $N\sdi M$ implies $N\ho M$ in this case. 
\end{Example}

\begin{Remark}\rm
Poly-(finite or cyclic) groups are 
the only known examples of finitely presented groups which are called ``good'' by Freedman and Quinn, i.e.
for which their techniques work \cite[p.~99]{Freedman-Quinn-Book}. Freedman and Teichner \cite{FreedTeich} showed that groups 
of subexponential growth are good, but the only known such groups which are finitely presented are 
poly-(finite or cyclic). Note that \thref{T.n4} may be true even if $\pi_1(M)$ 
is not good in the above sense.
\end{Remark}

We now prepare the proof of Theorem~B of the introduction.
Recall that, to a homeomorphism $f\:M\to N$ between manifolds is associated its 
Casson-Sullivan invariant ${\rm cs}(f)\in H^3(M;\bbz_2)$,
which, for $\dim M \geq 4$, vanishes if and only if $h$ is isotopic to a PL-homeomorphism 
(thus, to a diffeomorphism if $\dim M = 4$: see \cite[Definition~3.4.5]{Rudyak}).

\begin{Proposition}\label{P.CS}
Let $M$, $N$ be two closed smooth connected $4$-manifolds. Suppose that there exists a homeomorphism $f\:M\to N$
with vanishing Casson-Sullivan invariant. Then, $M$ and $N$ are smoothly s-cobordant. The converse is true when 
$\pi_1(M)$ is poly-(finite or cyclic).
\end{Proposition}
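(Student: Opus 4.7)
For the forward implication, I would consider the topological mapping cylinder $C_f$ (formula~\eqref{defCfTop}): this is a topological $4$-manifold with $\partial C_f=M\sqcup N$ carrying the given smooth boundary structures, and $C_f\ho M\times I$ rel~$M$. The Casson--Sullivan invariant ${\rm cs}(f)$ is precisely the obstruction to extending the smoothings of $\partial C_f$ to a smooth structure on $C_f$, so the hypothesis ${\rm cs}(f)=0$ produces a smooth manifold $W$ with $W\ho M\times I$ rel~$M$ and smooth boundary $M\sqcup N$. The inclusions $M\hookrightarrow W$ and $N\hookrightarrow W$ are then homotopy equivalences, so $W$ is a smooth h-cobordism; by the $n=4$ case of \proref{P.ihcob} (Stallings), $W$ is moreover invertible. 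Chapman's topological invariance of Whitehead torsion then gives $\tau(W,M)=0$, since topologically $(W,M)\ho(M\times I,M\times\{0\})$. Hence $W$ is a smooth s-cobordism from $M$ to $N$.

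For the converse, I would assume $\pi_1(M)$ is poly-(finite or cyclic) and let $W$ be a smooth s-cobordism from $M$ to $N$. Since such fundamental groups are good, the topological $4$-dimensional s-cobordism theorem of Freedman--Quinn (\cite[Theorem~7.1A]{Freedman-Quinn-Book}), applied exactly as in the proof of \thref{T.n4}, yields a homeomorphism $\Phi\colon W\fl{\approx}M\times I$ rel~$M$. Letting $f\colon M\to N$ be the homeomorphism inverse to $\Phi|_N$ (composed with the identification $M\times\{1\}\approx M$), the map $\Phi^{\mun}$ identifies $W$ topologically, rel~$\partial$, with the topological mapping cylinder $C_f$; the smooth structure on $W$ therefore furnishes a smoothing of $C_f$ extending the given boundary smoothings, which by the characterizing property of the Casson--Sullivan invariant forces ${\rm cs}(f)=0$.

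The main conceptual step, and what would require the most care in a full write-up, is justifying the identification of ${\rm cs}(f)$ with the obstruction to smoothing the topological mapping cylinder of $f$. Once this standard smoothing-theoretic reinterpretation is granted, the two directions assemble classical results already used elsewhere in the paper: Chapman's theorem on topological invariance of Whitehead torsion in the forward direction, and the Freedman--Quinn $4$-dimensional topological s-cobordism theorem for good $\pi_1$ in the converse.
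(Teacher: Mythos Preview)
Your approach is essentially the paper's. A few small remarks: the mapping cylinder $C_f$ is a $5$-manifold, not a $4$-manifold (clearly a slip, since you treat it correctly as a cobordism with boundary $M\sqcup N$); the detour through invertibility via \proref{P.ihcob} is unnecessary, as Chapman's theorem already gives $\tau(W,M)=0$ directly; and the paper resolves your flagged ``main conceptual step'' concretely by identifying the smoothing obstruction with the relative Kirby--Siebenmann class ${\rm ks}(C_f,\partial C_f)\in H^4(C_f,\partial C_f;\bbz_2)$ and then observing (citing \cite[Remark~3.4.6]{Rudyak}) that under Poincar\'e--Lefschetz duality $H^4(C_f,\partial C_f;\bbz_2)\approx H_1(M;\bbz_2)\approx H^3(M;\bbz_2)$ this class corresponds exactly to ${\rm cs}(f)$. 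The converse in the paper is argued identically to yours, again reading ${\rm cs}(f)={\rm ks}(W,\partial W)=0$ from the existence of a smooth structure on $W$.
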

 
\begin{proof}
The mapping cylinder $C_f$ produces a topological s-cobordism $W$ between $M$ and $N$. 
As $\dim W = 5$, the only obstruction to extend the 
smooth structure on $\partial W$ to a smooth structure on $W$ is the Kirby-Siebenmann 
class ${\rm ks}(W,\partial W)\in H^4(W,\partial W;\bbz_2)$ (see \cite[Theorem~8.3.B]{Freedman-Quinn-Book}).
The image of ${\rm ks}(W,\partial W)$ under the isomorphism 
\beq{kscs}
H^4(W,\partial W;\bbz_2)\approx H_1(W,\bbz_2)\approx H_1(M;\bbz_2)\approx H^3(M;\bbz_2)
\eeq
coincides with ${\rm cs}(f)$ \cite[Remark~3.4.6]{Rudyak}.

Conversely, let $(W,M,N)$ be a smooth s-cobordism. If $\pi_1(M)$ is poly-(finite or cyclic), the topological
s-cobordism holds true (see the proof of \thref{T.n4}). Therefore, $W\ho M\times I \ (\rel M)$ and the topological
version of \exref{L.BMCf} makes $W$ homeomorphic $\rel M$
to the mapping cylinder $C_f$ of a homeomorphism $f\:M\to N$. 
Using \eqref{kscs}, one has ${\rm cs}(f)={\rm ks}(W,\partial W)=0$.  
\end{proof}

As $H^3(M;\bbz_2)\approx H_1(M;\bbz_2)$, one has the 
following corollary of \proref{P.CS}; it was proven by C.T.C. Wall~\cite{Wall4fourMfd}
when $M$ is simply connected, by a different method.

\begin{Corollary}\label{C.WallGen}
Let $M$ and $N$ be smooth closed manifolds of dimension $4$ which are homeomorphic.
Suppose that $H_1(M,\bbz_2)=0$. Then, $M$ and $N$ are smoothly s-cobordant.
\end{Corollary}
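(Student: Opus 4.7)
The proof I propose is essentially a one-line reduction to the preceding Proposition~\ref{P.CS}, which has already done all the heavy lifting (the Kirby--Siebenmann obstruction argument). The only thing to check is that the Casson--Sullivan invariant of the hypothesized homeomorphism automatically vanishes.

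My plan is to apply Poincar\'e duality with $\bbz_2$-coefficients. Recall that any smooth (or topological) manifold, orientable or not, carries a $\bbz_2$-fundamental class, so Poincar\'e duality holds with $\bbz_2$-coefficients without any orientability hypothesis. For $M$ a closed $4$-manifold this gives an isomorphism
$$
H^3(M;\bbz_2) \;\cong\; H_1(M;\bbz_2).
$$
By hypothesis the right-hand side is zero, so $H^3(M;\bbz_2)=0$.

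Now let $f\:M\to N$ be any homeomorphism provided by the assumption that $M\ho N$. Its Casson--Sullivan invariant ${\rm cs}(f)$ lives in $H^3(M;\bbz_2)$, which has just been shown to vanish; hence ${\rm cs}(f)=0$ automatically. Proposition~\ref{P.CS} then produces a smooth s-cobordism between $M$ and $N$, which is exactly the conclusion sought.

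There is really no obstacle: the ingredients — $\bbz_2$-Poincar\'e duality without orientability, and the identification of ${\rm cs}(f)$ with the Kirby--Siebenmann obstruction via the isomorphism \eqref{kscs} — are both in place. The only small point worth flagging is that one should not invoke integral Poincar\'e duality (which would require an orientation), but the $\bbz_2$-version, so that the corollary applies uniformly to non-orientable $M$ as well.
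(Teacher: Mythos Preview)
Your proof is correct and is essentially identical to the paper's own argument: the paper simply notes that $H^3(M;\bbz_2)\approx H_1(M;\bbz_2)$ (by $\bbz_2$-Poincar\'e duality) and immediately invokes \proref{P.CS}. Your remark that $\bbz_2$-coefficients avoid any orientability hypothesis is a useful clarification but does not change the route.
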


We are ready to prove Theorem~B of the introduction.

\begin{proof}[Proof of Theorem B]
Let $M$ and $N$ be smooth closed manifolds of dimension $4$ which are homeomorphic.
By \corref{C.WallGen}, there is a smooth h-cobordism $W$ between
$M$ and $N$. Such a cobordism is invertible (see \cite[Thm~4]{Stallings};
if $M$ is simply connected, then $W^\mun=\bar W$ \cite[Lemma~7.8]{RouSan}).
Thus $N\sdi M$ by \proref{P.invcob}.
\end{proof}

We now discuss a partial analogue to \proref{P.HL}, which was first proven by C.T.C~Wall in the simply connected case
\cite[Theorem~3]{Wall4fourMfd}. (See also Section~\ref{cstab}). 

\begin{Proposition}\label{P.HL4}
Let $M$ and $N$ be smooth closed connected manifolds of dimension $4$ which are $\bbr$-diffeomorphic.
Then, there exists $p\in\bbn$ such that
$$
M\,\sharp\, p(S^2\times S^2) \di N\,\sharp\, p(S^2\times S^2) \, .
$$
\end{Proposition}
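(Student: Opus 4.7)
The plan is to convert the $\bbr$-diffeomorphism into an h-cobordism and then stabilize in a controlled way, following Wall's technique.

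First, by \proref{P.invcob}, the hypothesis $N \sdi M$ furnishes an invertible cobordism $(W,j_M,j_N)$ from $M$ to $N$. By \proref{P.ihcob} (whose $n=4$ case is the theorem of Stallings cited there), $W$ is a smooth h-cobordism. So the whole problem reduces to a ``stabilized trivialization'' statement for 5-dimensional smooth h-cobordisms: there should exist $p$ such that $W\,\sharp\, p(S^{2}\times S^{2}\times I)$ is a product cobordism, from which the desired diffeomorphism of stabilized ends is immediate.

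Next I would apply \lemref{L.simformCob} with $r=2$ (its conclusion being valid for $n\geq 4$) to obtain a decomposition $W=W_{2}\pcirc W_{3}$ in which $W_{2}$ is built from $M$ by attaching $p$ two-handles to yield an intermediate manifold $M_{2}$, and $W_{3}$ is built from $M_{2}$ by attaching $p$ three-handles to yield $N$. Because $j_{M}$ and $j_{N}$ are homotopy equivalences, the ambient fundamental groups $\pi_{1}M$, $\pi_{1}W$, $\pi_{1}N$ all agree; since attaching a 2-handle along a non-null-homotopic circle would quotient $\pi_{1}$, the attaching circles of the $p$ two-handles of $W_{2}$ must be null-homotopic in $M$. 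Dually, the belt spheres of the $3$-handles of $W_{3}$ give, via $\bar W_{3}$, a set of null-homotopic attaching circles for 2-handles on $N$.

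The core geometric step is Wall's stabilization trick. A null-homotopic framed circle in a smooth $4$-manifold bounds an immersed disk, and after connected sum with one copy of $S^{2}\times S^{2}$ each self-intersection of that disk can be removed by tubing into a parallel copy of an $S^{2}$-factor, producing an embedded framed disk. Applied simultaneously to all $p$ attaching circles of $W_{2}$, this shows that after stabilizing $M$ by $p$ copies of $S^{2}\times S^{2}$ the two-handles of $W_{2}\,\sharp\, p(S^{2}\times S^{2}\times I)$ are attached along standard, trivially framed circles in $M\,\sharp\, p(S^{2}\times S^{2})$; each such trivial 2-handle cancels against the dual 3-handle coming from the newly added $S^{2}\times S^{2}\times I$ summand (which itself has a Morse presentation consisting of a single 2-handle and a single 3-handle). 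Running the symmetric argument with the dual decomposition $\bar W=\bar W_{3}\pcirc\bar W_{2}$ standardizes the 2-handles of $\bar W_{3}$ on $N\,\sharp\, p(S^{2}\times S^{2})$. Combining both identifications one obtains
$$
M\,\sharp\, p(S^{2}\times S^{2}) \;\di\; N\,\sharp\, p(S^{2}\times S^{2}).
$$

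The main obstacle is, as expected, the failure of the smooth Whitney trick in dimension $4$ — precisely the reason one cannot hope for $W$ itself to be a product. Stabilization by $S^{2}\times S^{2}$ circumvents this: the extra $2$-spheres serve as geometric receptacles for the unwanted double points of the Whitney (or capping) disks associated with the algebraic cancellation prescribed by \lemref{L.simformCob}. The integer $p$ is controlled by the number of handles in the simplified decomposition; as noted in the remark following \corref{sum.stab}, one cannot in general take $p=1$.
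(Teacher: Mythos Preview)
Your setup is correct up through obtaining the h-cobordism $W$ and its simplified decomposition $W=W_2\pcirc W_3$ with $p$ handles of each index. After that, the argument goes astray in two places.

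The paper does \emph{not} stabilize $W$. It shows directly that the $2$-handles of $W_2$ are \emph{already} attached trivially in $M$, so that the middle level satisfies $M_2\di M\,\sharp\,p(S^2\times S^2)$; the dual argument gives $M_2\di N\,\sharp\,p(S^2\times S^2)$, and the proposition follows. Two ingredients make this work, and you miss the decisive one. First, a null-homotopic embedded circle in a $4$-manifold is isotopic to one lying in a chart: this is exactly the codimension-$3$ unknotting argument already used in the proof of \lemref{L.simformCob}. So your tubing-off of self-intersections of an immersed bounding disk is solving a non-problem; the attaching circles are unknotted without any stabilization. Second --- and this is the real content --- one must show that the \emph{framing} is the standard one; otherwise surgery yields an $S^2\tilde{\times}S^2$ summand rather than $S^2\times S^2$. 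The paper handles this by capping the core of each $2$-handle with a disk $\alpha_-\subset M\times I$ to form an embedded sphere $\bar\alpha\subset W$, chosen null-homotopic (possible since $\pi_2(M\times I)\to\pi_2(W)$ is an isomorphism); then $TS^2\oplus\nu\cong\bar\alpha^*TW$ is trivial, so the normal bundle $\nu$ is stably trivial and hence trivial (rank $>2$), which forces the framing to be standard. Your phrase ``embedded framed disk'' does not address this: exhibiting an embedded disk bounding the circle only re-establishes unknottedness, not that the $2$-handle's prescribed framing extends over it.

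Separately, your stated intermediate goal --- that $W\,\sharp\,p(S^2\times S^2\times I)$ be a \emph{product} --- is stronger than what is needed or proved, and the cancellation step is unjustified: $S^2\times S^2\times I$ is itself a product cobordism, so there is no ``$3$-handle from the newly added summand'' for the original $2$-handles of $W_2$ to cancel against. The correct conclusion from trivially attached $2$-handles is only a diffeomorphism statement about the middle level $M_2$, not a product structure on the stabilized $W$.
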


\begin{proof}
A simplified handle decomposition 
$W = (M\times I) \pcirc W_2 \pcirc W_{3}$ as in 
\lemref{L.simformCob} is available but we do not know that the $2$-handles of $(W_2,M\times \{1\},M_2)$ 
are attached trivially (see \cite[Theorem~3 and its proof]{wallGeoConn}).
However, since $\pi_1(M)\approx\pi_1(W)$, the attaching map $\alpha\:S^1\times D^3\to M\times \{1\}$ of a $2$-handle of $W_2$
is homotopically trivial. As in the proof of \lemref{L.simformCob}, this implies, using an ambient isotopy of 
$M\times \{1\}$, that one may assume that $\alpha(S^1\times D^3)$ is contained in a disk. 
Also, $\alpha\:S^1=S^1\times \{0\}\to M\times \{1\}$ extends to an embedding $\alpha_-\:D^2\to M\times I$ and thus to an embedding
$\bar\alpha\:S^2\to W$. Since $\pi_2(M\times I)\to\pi_2(W)$ is an isomorphism, one can choose $\alpha_-$ so that 
$\bar\alpha$ is homotopically trivial. 

That $\alpha$ is attached trivially is thus equivalent to the triviality of the normal
bundle $\nu$ to $\bar\alpha$. As a vector bundle over $S^2$,  the Whitney sum $TS^2\oplus \nu$ is isomorphic
to $\bar\alpha^*TW$. The latter is trivial since $\bar\alpha$ homotopically trivial. As $TS^2$ is stably trivial, so is
$\nu$, which implies that $\nu$ is trivial since ${\rm rank\,}\nu > \dim S^2$. 
\end{proof}

Unlike in \proref{P.HL}, the torsion of an invertible cobordism between $M$ 
and $N$ only furnishes
a lower bound for the integer $p$ of \proref{P.HL4}, as seen by the case where $M$ and $N$ are
simply connected. An interesting question would be to find the minimal integer $p$ necessary to construct a given
invertible cobordism. Some results in the simply connected case may be found in \cite{Lawson2}.

We finish this section by considering the following problem which is important in view of \secref{S.classif}.

\begin{Problem}\label{Prob4}
Describe the set $\calb(M)$ for $M$ a smooth closed connected manifold of dimension $4$.
\end{Problem}

Only partial information is currently known about this problem. 
For instance, the map $\calt\:\calb(M)\to\wh(M)$ of \thref{T.hcob}, associating to an invertible cobordism $(W,M,N)$
its Whitehead torsion $\tau(W,M)$ is defined, and one has the following

\begin{Lemma}\label{L.surj4}
Let $M$ be a smooth closed connected manifolds of dimension $4$.
Then, the map $\calt\: \calb(M)\to\wh(M)$ is surjective.
\end{Lemma}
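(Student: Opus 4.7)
The plan is to realize any prescribed $\sigma\in\wh(M)$ as the Whitehead torsion of some smooth invertible cobordism $(W,M,N)$ starting at $M$.

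First I construct a smooth $h$-cobordism $W$ on $M$ with $\tau(W,M)=\sigma$, by adapting to $n=4$ the construction used in step~(2) of the proof of Theorem~\ref{T.hcob}, following \cite[Theorem~11.1]{MilnorWT}. Represent $\sigma$ by an invertible matrix $A\in GL_p(\bbz\pi_1M)$ and form $V_1=M\times I$ together with $p$ trivial $2$-handles attached in disjoint $4$-discs of $M\times\{1\}$; the upper boundary becomes $M\,\sharp\,p(S^2\times S^2)$, whose second $S^2$-factors provide a family of dual $2$-spheres with trivial normal bundle and algebraic intersection one with the first factors. Next I attach $p$ $3$-handles along immersed $2$-spheres whose $\bbz\pi_1M$-homology classes encode the entries of $A$; in the high-dimensional case this is done by Whitney's trick, but in our $4$-dimensional attaching boundary I replace Whitney's trick by the Norman trick, tubing each self-intersection into a parallel copy of a dual sphere to convert immersions into framed embeddings. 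This replacement may force a stabilization of $A$ by identity blocks (adding further canceling $2$-$3$ handle pairs) but does not change the class of $\sigma$ in $\wh(M)$. By construction, the relative cellular chain complex $C_*(\tilde W,\tilde M)$ is concentrated in degrees $2$ and $3$ with boundary matrix (in the preferred basis) the stabilization of $A$, so $\tau(W,M)=\sigma$.

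Second, by Proposition~\ref{P.ihcob}---which invokes Stallings's theorem \cite[Thm~4]{Stallings} in dimension~$4$---every smooth $h$-cobordism in dimension~$4$ is invertible. Hence $[W,j_M[\,\in\calb(M)$ with $\calt([W,j_M[)=\sigma$, proving that $\calt$ is surjective.

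The most delicate step is the dimension-$4$ handle-attaching: I must verify that the Norman-tubed attaching spheres, after the stabilization, still realize a boundary matrix representing $\sigma$ in $\wh(M)$, so that the resulting $h$-cobordism has the prescribed torsion class rather than some perturbation of it. Once this geometric realization is secured, invertibility comes for free from Stallings's theorem, and the surjectivity of $\calt$ follows immediately.
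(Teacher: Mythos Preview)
Your argument has a genuine gap: the handle construction you describe does \emph{not} in general produce an $h$-cobordism in dimension~$4$, only a cobordism in which the inclusion $M\hookrightarrow W$ is a homotopy equivalence. The trouble lies on the $N$ side. Turned upside down, your $W$ is built from $N$ by attaching the duals of your handles, which have indices $5-3=2$ and $5-2=3$. The dual $2$-handles (the duals of your $3$-handles) are attached along the belt circles of the $3$-handles, and there is no reason these are null-homotopic in $N$: surgery on a $2$-sphere in a $4$-manifold replaces $S^2\times D^2$ by $D^3\times S^1$ and can create fundamental group (already the unknotted $S^2\subset S^4$ surgers to $S^3\times S^1$). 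Your Norman-trick tubing makes the attaching spheres embedded and realizes the correct $\bbz\pi$-homology boundary matrix, but it does nothing to control $\pi_1$ of the surgered top boundary. All you can conclude from $H_*(W,M;\bbz\pi)=0$ and Poincar\'e--Lefschetz duality is that $H_*(W,N;\bbz\pi_1W)=0$, which says precisely that $N\hookrightarrow W$ is a Quillen plus-construction: $\pi_1(N)\to\pi_1(W)$ is surjective with \emph{perfect} kernel $K$, possibly nontrivial. Thus $W$ is only a semi-$h$-cobordism, and you cannot invoke \proref{P.ihcob}/Stallings.

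The paper confronts exactly this point. After obtaining such a semi-$h$-cobordism $(W,M,N)$ with $\tau(W,M)=\sigma$, it appeals to \cite[Theorem~11.1A]{Freedman-Quinn-Book} to produce a semi-$s$-cobordism $(W',N,N')$ that kills the perfect kernel $K$; the composite $X=W\pcirc W'$ is then a genuine $h$-cobordism with $\tau(X,M)=\sigma$, and Stallings now applies to give invertibility. So your outline is essentially the first half of the paper's proof, but it omits the step that repairs the $N$ end --- and that step requires an extra, nontrivial input from $4$-dimensional topology.
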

 
\begin{proof}
It is said in \cite[p.~102]{Freedman-Quinn-Book} that $\calt$ is surjective, based on ``the standard construction of
h-cobordisms'' with reference to \cite[p.~90]{RouSan}. But, when $n=4$, this standard construction 
for $\sigma\in\wh(M)$ only provides a cobordism $(W,M,N)$ such that the inclusion $M\hookrightarrow W$ 
is a homotopy equivalence with torsion $\sigma$. By Poincar\'e duality, one has $0=H^*(W,M;\bbz\pi)\approx H_*W,N;\bbz\pi)$,
where $\pi=\pi_1(W)\approx\pi_1(M)$. This proves that $W$ is a \dfn{semi-h-cobordism} from $N$, that
is to say that the inclusion $N\hookrightarrow W$ is homotopy equivalent to a Quillen plus-construction 
(see \cite{HV}); thus $i_*\:\pi_1(N)\to\pi$ is onto with perfect kernel $K$.

By \cite[Theorem~11.1A]{Freedman-Quinn-Book}, there exists a semi-s-cobordism $(W',N,N')$
with $\pi_1(M)\to\pi_1(W')$ onto with kernel $K$. Formula \eqref{duality} may be used here,
and thus $X=W\pcirc W'$ is an h-cobordism with $\tau(X,M)=\sigma$. 
As an h-cobordism between closed $4$-manifolds, $X$ is invertible \cite[Thm~4]{Stallings}.
\end{proof}

Some information is available on $\calb(M)$ when $M$ is simply connected. By \corref{C.WallGen}, the map $e$ of \eqref{D.mape}
may be replaced by a surjective map $e\:\calb(M)\to\calm(M)$, where $\calm(M)$ is the set of diffeomorphism  
classes of manifolds homeomorphic to $M$. This set may be infinite \cite{FinStern}, and so does $\calb(M)$. 
Let $\calm^0(M)$ be a set of representatives of $\calm(M)$. 
For $M$ oriented, one can precompose the bijection of $\lemref{L.partBM}$ by the surjective map
{\small
$$
\dia{
\coprod_{N\in\calm^0_{\rm or}(M)} \cob^{*,{\rm or}}(M,N)\Big/\aut^{\rm or}(N) \ar@{>>}[r] &
\coprod_{N\in\calm^0(M)} \cob^*(M,N)\Big/\aut(N) 
}
$$
}
where ``or'' stands for ``oriented''. Now, by \cite{Lawson2,Kreck1}, $\cob^{*,{\rm or}}(M,N)$ is in bijection with 
the set of isometries between the intersection forms of $M$ and $N$. 

\begin{Examples} \rm The above discussion implies the following facts.

\noindent
(1) {\it the case $M=S^4$.} \ The intersection form is trivial, so $\cob^{*,{\rm or}}(M,N)$ has one element
for each oriented homotopy sphere $N$. Note that $\cob^{*,{\rm or}}(M,-M)$ and $\cob^{*,{\rm or}}(M,M)$
are represented by the mapping cylinders of the identity or a reflection. By \lemref{L.mapscob1}, these 
cobordisms both represent $[S^4\times I[$ in $\calb(S^4)$.

\noindent
(2) {\it the case $M=\bbc P^2$.} \ The set $\cob^{*,{\rm or}}(M,-M)$ has one element and $\cob^{*,{\rm or}}(M,M)$ is empty. 

\noindent
(3) Results given in \cite[Proposition~8 and its proof]{Lawson2} imply, for instance, that 
$\cob^{*,{\rm or}}(M,M)\big/\aut^{\rm or}(M)$ is infinite for $M=\bbc P^2 \,\sharp\,k\,\overline{\bbc P^2}$ ($k\geq 9$).
\end{Examples}

The following result is a direct consequence of Example (1) above.

\begin{Proposition}\label{P.BS4}
The set $\calb(S^4)$ consists of one element if and only if 
the smooth Poincar\'e conjecture is true in dimension $4$.  \mancqfd
\end{Proposition}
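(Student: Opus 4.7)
The plan is to combine the partition of $\calb(S^4)$ from \lemref{L.partBM} with the structural information about oriented invertible cobordisms from $S^4$ provided by Example~(1) above.

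For the forward implication, suppose the smooth $4$-dimensional Poincar\'e conjecture holds. By \proref{P.ihcob} any invertible cobordism is an h-cobordism, so any manifold $N$ invertibly cobordant to $S^4$ is a smooth homotopy $4$-sphere, hence diffeomorphic to $S^4$ by hypothesis. Applying \lemref{L.partBM}, $\calb(S^4)$ therefore reduces to the single summand $\cob^*(S^4,S^4)/\aut(S^4)$. By Example~(1), $\cob^{*,{\rm or}}(S^4,S^4)$ and $\cob^{*,{\rm or}}(S^4,-S^4)$ each contain a unique element, both represented by mapping cylinders (of the identity and of an orientation-reversing reflection, respectively), and \lemref{L.mapscob1} identifies both with $[S^4 \times I[$ in the unoriented $\calb(S^4)$. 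Thus $|\calb(S^4)| = 1$.

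For the reverse implication, assume $|\calb(S^4)| = 1$, and let $N$ be an arbitrary smooth homotopy $4$-sphere. By Freedman's theorem $N$ is homeomorphic to $S^4$; since $H_1(S^4;\bbz_2) = 0$, Theorem~B yields $N \sdi S^4$, and \proref{P.invcob} produces an invertible cobordism $(W, j_{S^4}, j_N)$. Since $\calb(S^4)$ is a singleton, $[W, j_{S^4}[ = [S^4 \times I, j_{S^4}^0[$, and \exref{L.BMCf} then gives $[W] = [C_f]$ for some diffeomorphism $f\colon S^4 \to N$. In particular $N \di S^4$, establishing the smooth $4$-dimensional Poincar\'e conjecture.

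The argument is essentially organizational: the heavy inputs (Freedman's topological Poincar\'e theorem, Theorem~B, and the intersection-form classification underlying Example~(1)) are already in hand. The only mild subtlety is the unoriented-versus-oriented bookkeeping in the forward direction, but this is precisely the point recorded in Example~(1): the two oriented mapping cylinders $C_{\id}$ and $C_\rho$ collapse to the same unoriented class $[S^4 \times I[$ via \lemref{L.mapscob1}.
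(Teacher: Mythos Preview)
Your proof is correct and follows the same line as the paper, which records the proposition as a direct consequence of Example~(1) and the surjectivity of $e\:\calb(S^4)\to\calm(S^4)$ established just before the examples. Your reverse implication is slightly more explicit than necessary: rather than invoking Theorem~B, \proref{P.invcob}, and \exref{L.BMCf}, one can simply note that the surjectivity of $e$ forces $|\calm(S^4)|=1$, which together with Freedman's theorem is exactly the smooth Poincar\'e conjecture; but your route is equally valid.
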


\section{The case $n\leq 3$}\label{S.nthree}

We start with the proof of Theorem~C of the introduction (and then Theorem~A in low dimensions).

\begin{proof}[Proof of Theorem C]
There is only one closed manifold in dimension $1$, namely the circle. 
Closed surfaces are classified up to diffeomorphism by their fundamental group.
This proves  Theorem~C  when $n\leq 2$.

In dimension 3, let $M$ and $N$ be closed smooth orientable manifolds. Thanks
to the proof of the geometrization conjecture \cite{MorganTian2}, we know that 
$M$ and $N$ are geometric in the sense of Thurston. Therefore, if $M$ and $N$ 
are h-cobordant, a theorem of Turaev \cite[Theorem~1.4]{Turaev2} implies that 
they are homeomorphic, and hence also diffeomorphic by smoothing theory 
\cite[Theorem~6.4]{munkres}.
\end{proof}

\begin{Remark}\rm 
Theorem~C also follows from a theorem of Kwasik-Schultz which is interesting 
in itself: 
{\em an h-cobordism between geometric closed 3-dimensional manifolds $M$ and 
$N$ is an s-cobordism}
\cite[Theorem p.~736]{KwasikSchultz2}. One thus get a simple homotopy 
equivalence from $N$ to $M$, 
and such a map is homotopic to a diffeomorphism by \cite[Theorem~1]{Turaev} or 
\cite[Theorem~1.1]{KwasikSchultz2}.
\end{Remark}

\begin{Remark}\label{nonorthreemfds}\rm
We do not know if Theorem~C is true for closed non-orientable manifolds in dimension~3. The proof of 
\cite[Theorem~1.1]{KwasikSchultz2} uses the splitting theorem for homotopy equivalences
of \cite{HenLau}, which is wrong in general for non-orientable manifolds (see \cite{Hendriks}). 
Currently, a positive answer for the simplification problem for closed non-orientable
$3$-manifolds is only known for $P^2$-irreducible ones, i.e. irreducible (every embedded 2-sphere
bounds a 3-ball) and not containing any 2-sided $\rp{2}$. Such manifolds are indeed determined up to
diffeomorphism by their fundamental group \cite{Heil}.
\end{Remark}

We now turn our attention to the set $\calb(M)$.

\begin{Proposition}\label{P.BSurf}
Let $M$ be a smooth closed manifold of dimension $n\leq 2$. Then $\calb(M)$ contains one element. 
\end{Proposition}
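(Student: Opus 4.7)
The plan is to combine \proref{P.ihcob} with the classification of $1$- and $2$-manifolds for $n\leq 1$ and with $3$-manifold topology for $n=2$. By \proref{P.ihcob} any invertible cobordism starting from $M$ is an $h$-cobordism, and by the argument of \exref{L.BMCf} one has $[W,j_M[\,=\,[M\times I,j_M^0[$ in $\calb(M)$ if and only if there is a diffeomorphism $W\di M\times I$ relative to $j_M(M)$. Hence it suffices to prove that every $h$-cobordism starting from $M$ is trivial in this sense.

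The cases $n=0,1$ are elementary consequences of the classification of compact $1$- and $2$-manifolds. In dimension $0$, each component of $W$ must be an arc, since a circle component would not produce an $h$-cobordism of points. In dimension $1$, each component of $W$ is a compact surface with $\pi_1 \di \bbz$ and at least two boundary circles, hence an annulus $S^1\times I$. The product structure $\rel M$ is immediate in both cases.

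The substantive case is $n=2$. Here $W$ is a connected compact $3$-manifold with $\partial W = j_M(M)\dcup j_N(N)$; moreover, the $h$-cobordism hypothesis gives a $\pi_1$-isomorphism $j_M$, and surface classification applied to $\pi_1N\di\pi_1M$ shows $N\di M$. If $M\di S^2$, then $W$ is simply connected; capping $\partial W$ by two $3$-balls produces a simply connected closed $3$-manifold, which is diffeomorphic to $S^3$ by Perelman's proof of the $3$-dimensional Poincar\'e conjecture, so $W\di S^2\times I$, and a standard collar argument arranges this diffeomorphism to be $\rel j_M(M)$. If $M$ has positive genus, then $j_M(M)$ is a two-sided incompressible surface in $W$, so $W$ is a Haken $3$-manifold. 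A homotopy inverse $r\:W\to M$ of $j_M$ composed with $j_N$ gives a homotopy equivalence $N\to M$, homotopic (by the Dehn-Nielsen-Baer theorem) to a diffeomorphism $\varphi\:N\to M$. Using $\varphi$ one constructs a homotopy equivalence of pairs $(M\times I,\partial)\to (W,\partial W)$ which is a diffeomorphism on $\partial$, and Waldhausen's rigidity theorem for Haken $3$-manifolds then promotes it to a diffeomorphism $\rel \partial$, giving $W\di M\times I\ (\rel j_M(M))$.

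The main obstacle is the case $n=2$, where the conclusion rests on deep $3$-manifold theory (the $3$-dimensional Poincar\'e conjecture for the spherical case and Waldhausen's rigidity theorem for the Haken case). Without these inputs, the clean surface-classification arguments of dimensions $0$ and $1$ are no longer available, and even formulating the Haken case requires the incompressibility consequences of $\pi_1$-injectivity of~$j_M$.
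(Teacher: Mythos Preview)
Your treatment of $n\le 1$ and of $M=S^2$ matches the paper's proof. For surfaces with $\chi(M)\le 0$ you invoke Waldhausen's rigidity theorem instead of the Stallings reference used in the paper; this is a legitimate alternative (with the usual caveat that in the non-orientable case one needs the appropriate extension of Waldhausen's theorem, and one should note that $\pi_2(W)=0$ forces irreducibility via the sphere theorem before declaring $W$ Haken).

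There is, however, a genuine gap: your dichotomy ``$M\di S^2$'' versus ``$M$ has positive genus'' does not cover $M=\bbr P^2$, or else covers it incorrectly. If you mean to include $\bbr P^2$ in the second case, the Haken argument breaks down: a compact $3$-manifold containing a two-sided $\bbr P^2$ is by definition not $P^2$-irreducible, hence not Haken, and Waldhausen's theorem does not apply. Moreover $\pi_1(\bbr P^2)\approx\bbz/2$ is finite, so neither the incompressible-surface machinery nor the Dehn--Nielsen--Baer step behaves as in the infinite-$\pi_1$ case. The paper handles $M=\bbr P^2$ separately: one passes to the universal cover $(\tilde W,S^2,S^2)$ with its free involution, caps off to get a homotopy $3$-sphere, applies Perelman, and uses \proref{P.invcob2}(c) to identify the involution on the open part equivariantly with the suspension of the antipodal map; this yields $W\ho\bbr P^2\times I$ and hence $W\di\bbr P^2\times I$. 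You need to supply an argument of this kind (or some other) for $\bbr P^2$ to close the gap.
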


\begin{proof}
Let $(W,M,N)$ be an h-cobordism with $n\leq 2$. 
We claim that $W\di M\times I$ if $n\leq 2$ (this implies that $W\di M\times I \ (\rel M)$). 
As an invertible cobordism is an h-cobordism by \proref{P.ihcob}, this will prove the proposition. 
The claim is obvious for $n=0$ and, for $n=1$, it follows from the classification of surfaces with boundary.
The case $n=2$ splits into three cases. We shall use the cobordisms 
$R_-=(D^3,\emptyset,S^2)$ and $R_+=(D^3,S^2,\emptyset)$.

\sk{1}\noindent (1) $M=S^2$. \ \rm
Let $(W,S^2,N)$ be an h-cobordism. By the classification of surfaces, there is a diffeomorphism $h\:S^2\to N$ and 
$\hat W = W\pcirc C_h$ is an h-cobordism from $S^2$ to itself, with $W\di \hat W \ (\rel S^2)$.
Then, $\Sigma^3=R_-\pcirc \hat W \pcirc R_+$ is a homotopy sphere, which is diffeomorphic to $S^3$
by Perelman's theorem  (\cite{Per, MorganTian}). Therefore, $\hat W$ is diffeomorphic to $S^3$ minus the interior of two smoothly embedded
$3$-disks, implying that $\hat W\di S^2\times I$.   

\sk{1}\noindent (2) $M=\rp{2}$. \ \rm
Suppose that $M=\rp{2}$. By composing $W$ with a mapping cylinder, we may assume that $N=\rp{2}$. 
Let $(\tilde W,\tilde M,\tilde N)$ be the universal covering of $W$, equipped with its involution $\tau$
(the deck transformation).
One has $\tilde M=\tilde N = S^2$, on which $\tau$ is the antipodal involution. As in (1),
form the closed $3$-manifold $\Sigma^3=R^3_-\pcirc \hat W \pcirc R^3_+$, diffeomorphic to $S^3$ 
by Perelman's theorem. The involution $\tau$ extends to an involution $\bar\tau$ on $\Sigma$ with
two fixed points $p_\pm$. By part~(c) of \proref{P.invcob2}, $W-\partial W\di M\times\bbr$. Therefore,
$\Sigma-\{p_\pm\}$ is equivariantly diffeomorphic to $S^2\times\bbr$ equipped with the involution
$\hat\tau(x,t)=(-x,t)$.

Hence, $(\Sigma,\bar\tau)$ is equivariantly homeomorphic to the suspension
of $(S^2,\tau)$. It follows that $\tilde W$ is equivariantly homeomorphic to $(S^2\times I,\hat\tau)$. 
Hence, $W\ho \rp{2}\times I$, implying that $W\di \rp{2}\times I$. 

\sk{1}\noindent (3) $\chi(M)\leq 0$. \ \rm
The discussion in \cite[pp.~97--99]{StallingsFib} implies that $W\di M\times I$.
\end{proof}

Much less is known about $\calb(M)$ when $M$ is a closed 3-manifold. When $M$ is orientable, we already used 
(in the proof of Theorem~C) the Kwasik-Schultz result that the Whitehead torsion map 
$\calt\: \calb(M) \to \wh(M)$ is identically zero. However, there are non-trivial s-cobordisms 
(see e.g.~\cite{CaSII,Kwasik1} for results and references).
The following question seems to be open.

\begin{Question}\label{h3inv} \rm
Is a smooth h-cobordism between closed 3-dimensional manifolds invertible?
\end{Question}

Here is a partial answer.

\begin{Proposition}\label{P.scobinv3}
Let $(W,M,N)$ be an s-cobordism between closed manifolds of dimension~$3$.
Suppose that $\pi_1M$ is poly-(finite or cyclic).
Then, $W$ is topologically invertible with $W^\mun=\bar W$.
\end{Proposition}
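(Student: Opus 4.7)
The plan is to verify directly that the dual cobordism $\bar W$ serves as a topological two-sided inverse of $W$, by showing both compositions $W \pcirc \bar W$ and $\bar W \pcirc W$ are topologically trivial products.

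First I would use the duality formula \eqref{duality} with $n = 3$: this gives $h_*(\tau(W, N)) = -\overline{\tau(W, M)} = 0$, and since $h_*$ is an isomorphism we get $\tau(W, N) = 0$. Hence $\bar W$, viewed as a cobordism from $N$ to $M$, is also an s-cobordism. Next, form $V = W \pcirc \bar W$, a $4$-dimensional cobordism from $M$ to itself. Since $\bar W$ deformation retracts onto its boundary component $N \subset W$, the space $V$ deformation retracts onto $W$, which in turn retracts onto the initial copy of $M$; a symmetric argument handles the other end. So $V$ is an h-cobordism, and by the composition formula \eqref{comphcob},
\[
\tau(V, M) = \tau(W, M) + h_*(\tau(\bar W, N)) = 0,
\]
exhibiting $V$ as a $4$-dimensional topological s-cobordism between closed $3$-manifolds whose fundamental group is poly-(finite or cyclic), i.e.\ ``good'' in the sense of Freedman--Quinn.

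Then I would invoke the topological $4$-dimensional s-cobordism theorem of Freedman--Quinn for s-cobordisms between $3$-manifolds with good fundamental group \cite[Ch.~7]{Freedman-Quinn-Book} to conclude $V \ho M \times I$ rel $M$. An entirely symmetric argument, exchanging the roles of $M$ and $N$ and of $W$ and $\bar W$, gives $\bar W \pcirc W \ho N \times I$ rel $N$. Together these homeomorphisms exhibit $[\bar W]$ as a two-sided inverse to $[W]$ in the topological cobordism category, so $W$ is topologically invertible with $W^\mun = \bar W$.

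The main obstacle is the invocation of the topological s-cobordism theorem in the key step: the applications quoted in \thref{T.n4} use the $5$-dimensional version for s-cobordisms between $4$-manifolds with good $\pi_1$, whereas here we need the $4$-dimensional version for s-cobordisms between $3$-manifolds. The goodness of $\pi_1 M$ is precisely what makes Freedman--Quinn's topological disk-embedding techniques applicable at this boundary dimension; this is the one place where the hypothesis on $\pi_1 M$ is used, and it is also the reason the conclusion is only topological rather than smooth (consistent with the open status of \thref{h3inv}).
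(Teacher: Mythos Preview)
Your argument has a genuine gap at the crucial step: there is no ``topological $4$-dimensional s-cobordism theorem'' for s-cobordisms between $3$-manifolds, even with good fundamental group. Freedman--Quinn's disk embedding theorem makes the Whitney trick available \emph{inside $4$-manifolds}, and this is what drives the s-cobordism theorem for \emph{$5$-dimensional} cobordisms between \emph{$4$-manifolds} (as in \thref{T.n4}). For a $4$-dimensional cobordism $V$ between $3$-manifolds the level sets in which one would have to perform the Whitney trick are $3$-dimensional, and Freedman's techniques say nothing there; one cannot even reach a simplified handle decomposition as in \lemref{L.simformCob}, since the requirement $2\le r\le n-2$ is empty for $n=3$. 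Note too that your step, if valid, would show that \emph{every} $4$-dimensional s-cobordism with good $\pi_1$ --- not just those of the special form $W\pcirc\bar W$ --- is topologically a product. That would render the proposition trivial (indeed $W$ itself would be a product), and it is precisely what the paper refrains from claiming: see the sentence about non-trivial s-cobordisms and the references \cite{CaSII,Kwasik1} just before \proref{P.scobinv3}.

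The paper's proof circumvents this by raising the dimension. It views $K=W\times I$ as a \emph{relative} s-cobordism from the $4$-manifold $M\times I$ to the $4$-manifold $W\pcirc\bar W$, the latter appearing as $(W\times\{0\})\cup(N\times I)\cup(W\times\{1\})$ inside $\partial(W\times I)$. Now $\dim K=5$ and $\pi_1K\approx\pi_1M$ is good, so the Freedman--Quinn topological s-cobordism theorem legitimately applies and gives $K\ho (M\times I)\times I$ rel $M\times I$, hence $W\pcirc\bar W\ho M\times I$ rel $M$; the $N$-side is symmetric. The extra factor of $I$ is the entire idea, not a technicality: it is exactly what puts the problem in the dimension where goodness of $\pi_1M$ buys the disk embedding theorem. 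Your torsion computations for $V=W\pcirc\bar W$ are correct and essentially reappear as the verification that $K$ is an s-cobordism, but on their own they do not finish the argument.
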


\begin{proof} {\it (following \cite[Lemma~7.8]{{RouSan}}).}
Consider $K=W\times I$ as a cobordism relative boundary from $M\times I$ to 
$(W\times \{0\})\pcirc (N\times I) \pcirc (\bar W \times \{1\})\di W\pcirc \bar W \ (\rel \partial)$.
Then $K$ is an s-cobordism. As $\dim (W\times I)=4$ and $\pi_1M$ is poly-(finite or cyclic),
the topological s-cobordism theorem implies that $W\di (M\times I)\times I \ (\rel M\times I\times \{0\})$.
Therefore, $W\pcirc \bar W \ho M\times I \ (\rel M)$. 
The same argument using the end $N\times I$ of $K$ gives that
$\bar W\pcirc W \ho N\times I \ (\rel M)$. 
\end{proof}

Here are two partial results when $M=S^3$.

\begin{Proposition}\label{P.BSk}
Let $(W,S^3,N)$ be a smooth h-cobordism. Then $W\ho S^3\times I \ (\rel S^3)$.   
\end{Proposition}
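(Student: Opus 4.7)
The plan is to reduce the statement to an application of the $4$-dimensional topological $s$-cobordism theorem in the simply connected case (Freedman), after using Perelman's solution to the Poincar\'e conjecture to identify $N$ with $S^3$. The argument should be very short, modeled on the proof of \thref{T.n4} and on case (1) of \proref{P.BSurf}.

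First I would observe that because $W$ is an h-cobordism, the inclusion $N\hookrightarrow W$ is a homotopy equivalence, so
$$
\pi_1(N)=\pi_1(W)=\pi_1(S^3)=1 ,
$$
and $N$ is a closed simply connected $3$-manifold. By Perelman's proof of the Poincar\'e conjecture \cite{Per, MorganTian}, $N\di S^3$, so $W$ is a smooth h-cobordism of dimension $4$ between two copies of $S^3$.

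Next, since $\wh(\pi_1 W)=\wh(\{1\})=0$, the h-cobordism $W$ is automatically an $s$-cobordism. The trivial group is poly-(finite or cyclic), so the $4$-dimensional topological $s$-cobordism theorem, as already invoked in the proof of \thref{T.n4} (see \cite[Theorem~7.1A]{Freedman-Quinn-Book}), applies and yields $W\ho S^3\times I\ (\rel S^3)$, which is the conclusion sought.

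There is no essential obstacle here: the two main inputs are precisely the tools used elsewhere in this section and in \secref{S.nfour}, namely Perelman's geometrization theorem (as in case (1) of \proref{P.BSurf}) and Freedman's topological $s$-cobordism theorem in dimension $4$ (as in the proof of \thref{T.n4}). The only point that deserves a brief comment in the write-up is why a smooth conclusion is not expected: the smoothing obstruction for a $5$-manifold with boundary would live in $H^4(W\times I,\partial;\bbz_2)$, and indeed we do not assert $W\di S^3\times I$, only a homeomorphism.
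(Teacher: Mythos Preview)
Your argument contains a dimension error that breaks the proof. In \thref{T.n4} the manifolds $M$ and $N$ have dimension $4$, so the h-cobordism $W$ between them is $5$-dimensional; this is precisely the setting of Freedman--Quinn's topological s-cobordism theorem \cite[Theorem~7.1A]{Freedman-Quinn-Book}, which concerns $5$-dimensional s-cobordisms between closed $4$-manifolds with good fundamental group. In \proref{P.BSk}, however, the ends $S^3$ and $N$ are $3$-dimensional, so $W$ is a $4$-dimensional cobordism. There is no s-cobordism theorem available in that dimension: indeed, the paper itself records this gap (see Question~\ref{h3inv} and the surrounding discussion), and nontrivial s-cobordisms between $3$-manifolds are known to exist \cite{CaSII,Kwasik1}. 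So the appeal to \cite[Theorem~7.1A]{Freedman-Quinn-Book} is illegitimate here.

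The paper's proof avoids this by a different use of Freedman's work. After using Perelman to identify $N$ with $S^3$, one caps off both ends of $W$ with $4$-balls $R_-$ and $R_+$ (as in case~(1) of \proref{P.BSurf}) to obtain a closed simply connected $4$-manifold $\Sigma$ which is a homotopy sphere. Freedman's topological $4$-dimensional Poincar\'e conjecture \cite{Freedman} then gives $\Sigma\ho S^4$, and $W$ is recovered as the complement of two tame $4$-disks in $S^4$, hence $W\ho S^3\times I$ relative $S^3$. Thus the relevant input from Freedman is the Poincar\'e conjecture for closed $4$-manifolds, not the s-cobordism theorem.
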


\begin{proof}
It is enough to prove the statement for $W$ a topological h-cobordism.  
By Perelman's theorem, there is a homeomorphism $h\:S^3\to N$ and 
$\hat W = W\pcirc C_h$ is an h-cobordism from $S^3$ to itself, with $W\ho \hat W \ (\rel S^3)$.
As in the proof of \proref{P.BSurf} (case of $M=S^2$), this implies that $W$ is the complement of two disjoint
tame $4$-disks in a homotopy sphere $\Sigma^4$. 
By Freedman's solution of the Poincar\'e conjecture \cite{Freedman}, $\Sigma\ho S^4$, which implies that 
$W\ho S^3\times I \ (\rel S^3)$. 
\end{proof}

\begin{Corollary}\label{C.BS3}
The following assertions are equivalent.
\begin{itemize}
\item[(a)] Any smooth h-cobordism $(W,S^3,N)$ is diffeomorphic to $S^3\times I$ relative $S^3$.
\item[(b)] The smooth Poincar\'e conjecture is true in dimension $4$. 
\end{itemize}
\end{Corollary}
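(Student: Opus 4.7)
The plan is to follow the pattern of \proref{P.BSk}, but to carry the argument through in the smooth category using assumption (b) at the key step. The bridge between the two assertions is the familiar operation of capping off (respectively, puncturing) a cobordism from $S^3$ to $S^3$ with two $4$-disks to obtain a closed homotopy $4$-sphere.

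For $(b)\Rightarrow(a)$ I would start with a smooth h-cobordism $(W,S^3,N)$. Since $N$ is a smooth homotopy $3$-sphere, Perelman's theorem provides a diffeomorphism $h\:N\to S^3$, and $\hat W := W\pcirc C_h$ is a smooth h-cobordism from $S^3$ to $S^3$. By \lemref{L.mapcyl} one has $W\di \hat W\pcirc C_{h^\mun}$, and $C_{h^\mun}\di S^3\times I$ rel its first $S^3$, so it is enough to trivialize $\hat W$ rel $S^3$. Cap off both ends of $\hat W$ with standard $4$-disks to obtain a smooth homotopy $4$-sphere $\Sigma^4$; under assumption (b), $\Sigma^4\di S^4$. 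The Palais disk theorem (uniqueness of smoothly embedded $n$-disks in a connected $n$-manifold up to ambient isotopy) then allows us to move the two embedded cap disks inside $S^4$ to the standard northern and southern hemispheres, so that their complement is $S^3\times I$. Tracing through the identifications gives $\hat W\di S^3\times I$ rel $S^3$, and hence $W\di S^3\times I$ rel $S^3$.

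For $(a)\Rightarrow(b)$ I would take a smooth homotopy $4$-sphere $\Sigma^4$ and pick two disjoint smoothly embedded closed $4$-disks $D_0,D_1\subset\Sigma^4$; set $W=\Sigma^4\setminus(\ind D_0\cup\ind D_1)$. A van Kampen / Mayer--Vietoris computation on $\Sigma^4=W\cup(D_0\sqcup D_1)$ shows that $W$ is simply connected with $H_*(W)\cong H_*(S^3)$ and that both inclusions $S^3\hookrightarrow W$ induce isomorphisms on homology, hence are homotopy equivalences by Whitehead's theorem; so $W$ is a smooth h-cobordism from $S^3$ to $S^3$. Applying (a), $W\di S^3\times I$ rel $\partial D_0$. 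Reattaching $D_0$ and $D_1$ then presents $\Sigma^4$ as $D^4\cup_\varphi D^4$ for some self-diffeomorphism $\varphi$ of $S^3$. Since every self-diffeomorphism of $S^3$ extends to a self-diffeomorphism of $D^4$ (the orientation-preserving component by Cerf's theorem $\pi_0\Di^+(S^3)=0$, the orientation-reversing component by a reflection), we conclude $\Sigma^4\di S^4$.

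The main obstacle in both directions is the bookkeeping of the ``rel $S^3$'' identifications across the disk-removal and cap-attaching moves: a diffeomorphism $\Sigma^4\di S^4$ need not a priori respect the capping disks, and a diffeomorphism $W\di S^3\times I$ need not a priori extend nicely over a cap. The Palais disk theorem resolves the first difficulty and Cerf's triviality of $\pi_0\Di^+(S^3)$ the second, so that once the disk decomposition is set up cleanly the equivalence $(a)\Leftrightarrow(b)$ reduces to the tautology that $\hat W$ is smoothly trivial rel its input $S^3$ if and only if the associated capped homotopy $4$-sphere is standard.
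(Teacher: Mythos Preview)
Your argument is correct and follows essentially the same route as the paper: for $(b)\Rightarrow(a)$ you reduce via Perelman and a mapping cylinder to an h-cobordism from $S^3$ to itself, cap off to a homotopy $4$-sphere, and invoke the smooth Poincar\'e conjecture (the paper phrases this as ``the proof of \proref{P.BSk} shows that (b) implies (a)''); for $(a)\Rightarrow(b)$ you puncture a homotopy $4$-sphere at two disks, trivialize the resulting h-cobordism, and quote Cerf's $\Gamma_4=0$ to identify $D^4\cup_\varphi D^4$ with $S^4$, exactly as the paper does. Your write-up is simply more explicit about the bookkeeping (Palais' disk theorem for the ``rel $S^3$'' control, and the Mayer--Vietoris check that the twice-punctured $\Sigma^4$ is an h-cobordism), which the paper leaves implicit.
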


\begin{proof}
The proof of \proref{P.BSk} shows that (b) implies~(a).
Conversely, let $\Sigma$ be a smooth homotopy $4$-sphere and let $K$ be a smooth submanifold of $\Sigma$ 
with $K\di D^4\dcup D^4$. Then $W=\Sigma - {\rm int\,}K$ is a smooth h-cobordism from $S^3$ to $S^3$.
If (a) is true, then $\Sigma \di D^4 \cup_h D^4$ for some self-diffeomorphism $h$ of $S^3$. 
Therefore, $\Sigma\di S^4$ \cite{Cerf}.
\end{proof}

We finish this section with the following open question.

\begin{Question}
If $(W,M,N)$ is an $h$-cobordism with $\dim M=3$, do we have $S^1\times W\di (S^1\times M)\times I \ (\rel S^1\times M)$ ? 
Note that the Whitehead torsion will vanish, by the product formula~\eqref{parProdFor}.  Hence this is true if 
$\dim M\geq 4$. 
\end{Question}

\section{Classifications of $\bbr$-diffeomorphisms}\label{S.classif}

In this section we examine the construction in \proref{P.invcob} further,
aiming for a full classification of $\bbr$-diffeomorphisms. 
The diffeomorphisms are classified under three levels of relations:
isotopy,  decomposability and concordance.

Let $M$ and $N$ be closed manifolds.
Let $\sdiff(M,N)$ be the set of $1$-diffeo\-morphisms from $N$ to $M$, endowed with the $\calc^\infty$-topology.
Thus, $\pi_0(\sdiff(M,N))$ is the set of isotopy classes of such $\bbr$-diffeomorphisms. 
For simplicity's sake, we restrict our attention to 
the subspace $\sdiff^+(M,N)$ of those $\bbr$-diffeomorphisms $f$ {\it preserving ends}, in the sense that 
$f(N\times[0,\infty))\subset M\times(r,\infty),\mathrm{\ for\ some\ }r\in \bbr$ (see also \remref{R.sdi+}).
As in \secref{S.icob}, $\aut(N)$ denotes the topological group of self diffeomorphims of $N$. 

In the proof of \proref{P.invcob}, an invertible cobordism $(A_f, j_M^r, f\pcirc j_N^s)$ (for suitable
 $r$ and $s$) 
was associated to $f\in\sdiff(N,M)$.
Consider its class $\cala_f$ in $Cob^*(M,N)$.  
Here is the fundamental observation leading to the other classification results. It is valid 
in all dimensions. 

\begin{Theorem}\label{T.ic0}
The correspondence $f \mapsto (A_f, j_M^r, f\pcirc j_N^s)$ 
induces a bijection 
$$
\cala: \pi_0(\sdiff^+(M,N))\fl{\approx}\cob^*(M,N) \, .
$$
Moreover, $\cala(\id_{M\times \bbr})=\bbun_M$, and if $f\in \sdiff^+(N, M)$
and $g\in\sdiff^+(P, N)$, then 
$\cala(f\pcirc g)= \cala(f)\pcirc\cala(g)$.
\end{Theorem}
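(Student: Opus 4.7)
I would verify four claims in order: well-definedness of $\cala$ on isotopy classes, identity and composition laws, surjectivity, and injectivity (the main obstacle). First I check that $[A_f, j_M^r, f\pcirc j_N^u]$ is independent of the auxiliary parameters $r<u<s<v$: replacing $r$ by $r'<r$ adjoins to $A_f$ the trivial piece $M\times[r',r]$ representing $\bbun_M$, leaving the class unchanged, and enlarging $u$ is analogous on the $N$ side. An end-preserving isotopy $f_t$ admits a uniform choice of parameters by compactness, so the cobordisms $A_{f_t}$ form a smooth family with constant boundary identifications, hence a constant equivalence class. For $f=\id_{M\times\bbr}$, the region $A_f = M\times[r,u]$ with tautological identifications literally represents $\bbun_M$. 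For composition, I would arrange parameters so that $A_g\subset N\times\bbr$ lies in a slice carried by $f$ upward of $f(N_{u_f})$; the region from $M_{r_f}$ to $(fg)(P_{u_g})$ then splits canonically as $A_f$ followed by $f(A_g)$, and because $f$ restricts to a diffeomorphism of cobordisms between $A_g$ and $f(A_g)$, one reads off $\cala(f\pcirc g) = \cala(f)\pcirc\cala(g)$.

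For surjectivity, given $[W,j_M,j_N]\in\cob^*(M,N)$ with inverse $W^\mun$, I would run the Stallings construction from the proof of $(b)\Rightarrow(c)$ of \proref{P.invcob}: assemble the noncompact manifold $W_\bbz$ from alternating copies of $W$ and $W^\mun$, extract the two diffeomorphisms $g_M\colon M\times\bbr\to W_\bbz$ and $g_N\colon N\times\bbr\to W_\bbz$ from its two product decompositions, and set $f = g_M^\mun \pcirc g_N$. End-preservation follows from how the copies of $W$ and $W^\mun$ sit inside $W_\bbz$, and direct inspection of the position of $A_0\subset W_\bbz$ relative to $g_M(M\times\{0\})$ and $g_N(N\times\{0\})$ identifies $\cala(f)$ with $[W]$.

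Injectivity is the main obstacle. Suppose $\cala(f_0)=\cala(f_1)$; for common parameters this yields a diffeomorphism of cobordisms $h\colon A_{f_0}\to A_{f_1}$ with $h\pcirc j_M^r = j_M^r$ and $h\pcirc f_0\pcirc j_N^u = f_1\pcirc j_N^u$. The heart of the argument is to extend $h$ to a global diffeomorphism $H\colon M\times\bbr\to M\times\bbr$ and then convert this into an isotopy between $f_0$ and $f_1$. After adjusting $h$ within its cobordism class to product collar form near both boundary components, I would define $H=\id$ on $M\times(-\infty,r]$, $H=h$ on $A_{f_0}$, and $H = f_1\pcirc f_0^\mun$ on $f_0(N\times[u,\infty))$, whence $H\pcirc f_0 = f_1$ on $N\times[u,\infty)$. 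A symmetric construction at the negative $N$-end, using \proref{P.invcob2} to identify the complements of $A_{f_0}$ with half-products, extends this agreement to all of $N\times\bbr$. Finally, a one-parameter family of conjugations of $H$ by $\bbr$-translations pushes the non-identity portion of $H$ out to infinity and contracts $H$ to $\id_{M\times\bbr}$, producing the required isotopy between $f_0$ and $f_1$.
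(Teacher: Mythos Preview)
Your treatment of well-definedness, the identity and composition laws, and surjectivity is essentially correct and parallels the paper. (One quibble: during an isotopy $f_t$ the $N$-end identification $f_t\pcirc j_N^u$ is \emph{not} constant, so you are implicitly using the isotopy extension theorem here, which the paper invokes explicitly.)

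The injectivity argument, however, has a genuine gap. Your patched diffeomorphism $H$ satisfies $H\pcirc f_0 = f_1$ only on $N\times[u,\infty)$; on $N\times(-\infty,u]$ the map $H$ is already determined by the pieces $\id$ and $h$, and there is no reason these combine to give $f_1\pcirc f_0^{-1}$ there. The proposed ``symmetric construction at the negative end'' cannot repair this without destroying the condition $H=\id$ on $M\times(-\infty,r]$, which is exactly what your final step needs. More fundamentally, conjugating $H$ by $\bbr$-translations does \emph{not} contract it to $\id_{M\times\bbr}$: the conjugate of $H$ by translation by $s$ agrees with the identity on $M\times(-\infty,r+s]$ for each finite $s$, but it is never globally the identity, and there is no limiting isotopy as $s\to\infty$. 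A diffeomorphism of $M\times\bbr$ equal to the identity near $-\infty$ is essentially a pseudo-isotopy of $M$, and such maps need not be isotopic to the identity.

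The paper avoids this obstacle by a different mechanism. After arranging $f(N_0)\subset\ind A_{\hat f}$ one writes $[A_{\hat f}]=[A_f]\pcirc[K]$ and deduces $[K]=\bbun_N$; this yields an isotopy of \emph{embeddings} from $f\pcirc j_N^0$ to $\hat f\pcirc j_N^0$ inside $M\times\bbr$. The isotopy extension theorem then produces an ambient isotopy $H_t$ of $M\times\bbr$ with $H_1\pcirc f(x,0)=\hat f(x,0)$, and the composite $G=\hat f^{-1}\pcirc H_1\pcirc f$ fixes $N\times\{0\}$ pointwise. Uniqueness of tubular neighbourhoods now forces $G\simeq\id_{N\times\bbr}$, whence $\hat f\simeq\hat f\pcirc G= H_1\pcirc f\simeq f$. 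The missing idea in your approach is this reduction to a self-map fixing a codimension-one slice, where tubular-neighbourhood uniqueness applies, rather than an attempt to contract a globally defined $H$ by translation.
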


Before we proceed, we remark that this gives a new interpretation of the category of invertible cobordisms.

\begin{Corollary} The category $\cob^*$ is isomorphic to the opposite of  the category where the objects are smooth
 manifolds and  the set of morphisms from $M$ to $N$ is $ \pi_0(\sdiff^+(M, N))$.
 \end{Corollary}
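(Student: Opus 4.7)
My plan is to read the corollary as a purely categorical repackaging of Theorem~\ref{T.ic0}. Let $\cald$ denote the category described in the corollary: objects are closed smooth manifolds and $\cald(M,N):=\pi_0(\sdiff^+(M,N))$, with composition induced by the composition of $\bbr$-diffeomorphisms. A morphism class in $\cald(M,N)$ is represented by a diffeomorphism $N\times\bbr\to M\times\bbr$, so the underlying map runs \emph{opposite} to the direction of the categorical arrow; this mismatch, together with the left-to-right convention used throughout Section~\ref{S.cob} for $\pcirc$ in $\cob$, is exactly what forces the appearance of $\cald^{op}$ in the conclusion.

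Next I would apply Theorem~\ref{T.ic0} directly. It supplies, for each ordered pair $(M,N)$, a set-level bijection $\cala: \cald(M,N)\to\cob^*(M,N)$, together with the identities $\cala(\id_{M\times\bbr})=\bbun_M$ and $\cala(f\pcirc g)=\cala(f)\pcirc\cala(g)$ for composable pairs. Translated through $\cald$'s categorical composition, these say that $\cala$ is bijective on every hom-set, preserves identities, and reverses the order of composition when viewed as a map $\cald\to\cob^*$. Since $\cala$ is in addition the identity on the class of objects, it constitutes a covariant isomorphism of categories $\cald^{op}\to\cob^*$, which is the statement of the corollary.

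The one piece of bookkeeping is aligning the several composition conventions in play: standard function composition for $\bbr$-diffeomorphisms, the left-to-right $\pcirc$ for cobordisms, and the reversal induced by the fact that a $\cald$-morphism $M\to N$ is modeled by a map $N\times\bbr\to M\times\bbr$. Once these are matched up, the only genuinely substantive input is Theorem~\ref{T.ic0} itself; the rest is formal, with no additional obstacle.
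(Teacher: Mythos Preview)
Your proposal is correct and matches the paper's approach exactly: the paper states this corollary immediately after \thref{T.ic0} without separate proof, treating it as a formal repackaging of the bijection, identity, and composition clauses of that theorem. Your write-up simply makes explicit the categorical bookkeeping (objects, hom-sets, identities, reversal of composition) that the paper leaves to the reader.
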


\begin{proof}[Proof of \thref{T.ic0}]
The proof involves several steps.

\sk{2}\noindent (1) {\it $\cala$ is well defined.} \
Let $f\:N\times\bbr\to M\times\bbr$ be an element of $\sdiff^+(N, M)$.
We use the notations of the proof of \proref{P.invcob}: $M_r=M\times \{r\}$, 
$N_u=N\times \{u\}$, $N'_u=f(N_u)$, etc.
Recall that, to define $A_f$, we choose $u$ and $r<s$ in $\bbr$
such that $N'_u\subset M\times (r,s)$.  The region from $M_r$ to $N'_u$
constitutes $A_f$ and that between $N'_u$ and $M_s$ constitutes the 
inverse $B_f$ of $A_f$. It is easy to check that 
$[A_f]=[A_f,j_M^r, f\pcirc j_N^u]\in\cob^*(M,N)$ does not depend on the 
choices of $r$ and~$u$. Consequently, we may assume that $u=0$.
\smallskip

Let $f_t\: N\times\bbr\to M\times\bbr$ ($t\in I$) be an isotopy between $f_0=f$ and $f_1=\hat f$.
Let $g_t$ be the restriction of $f_t$
to $N_0$. Since $N$ is compact, there exist $r<r_1<s_1<s$ in $\bbr$  such that 
$g_t(N_0)\subset M\times (r_1,s_1)$ for all $t$. By the isotopy extension 
theorem on $M\times [r,s]$ \cite[Theorem~1.3 in Chapter~8]{Hirsch}, 
there exists an ambient isotopy $F_t\:M\times\bbr\to M\times\bbr$, 
which is the identity outside $M\times [r_1,s_1]$ and
such that $g_t = F_t\pcirc g_0$. Using $r$ to define both $A_{f_0}$ and $A_{f_1}$, we see that 
$F_1$ provides a diffeomorphism from $A_f$ to $A_{\hat f}$ (relative $M_r$) such that 
$F_1\pcirc f\pcirc j_N^0 = \hat f\pcirc j_N^0$. Therefore, $[A_f]=[A_{\hat f}]$ in $\cob^*(M,N)$.

\sk{2}\noindent (2) {\it $\cala$ is surjective.} \  Let $A=(A,j_M, j_N)$ 
represent a class $\alpha\in\cob^*(M,N)$ and let $B=A^\mun$.
Composing infinitely many copies of $A\pcirc B$ as in \eqref{E.invcob}, we 
obtain a manifold $W$ together with two diffeomorphisms  
\beq{E.invcob2}
\dia{
M\times\bbr \ar[r]^(.59){g_M}_(.59)\approx & W  \ar@{<-}[r]^(.45){g_N}_(.45)\approx & N\times\bbr  \, ,
}
\eeq
Then $h=g_{\scr M}^\mun\pcirc g_{\scr N} \: N\times\bbr\to M\times\bbr$ is an element of $\sdiff^+(N,M)$ such that
$[A_h] = [A]$. Hence, $\cala(h)=\alpha$. 
 
\sk{2}\noindent (3) {\it $\cala$ is injective.} \  Let $f$ and $\hat f$ in $\sdiff^+(M,N)$
such that 
$\cala(f)=\cala(\hat f)$. Using observations in~(1), we can represent
$\cala(f)$ by $(A_f, j^0_M, f\pcirc j^u_N)$ and $\cala(\hat f)$ by 
$(A_{\hat f}, j^0_M, \hat f\pcirc j^{\hat u}_N)$, where we may assume that
$N'_u\subset \mathrm{int}\,A_{\hat f}$. In fact, after suitable isotopies 
of $f$ and $\hat f$ (by translations in the $\bbr$-direction) we may even 
assume that $u=\hat u=0$.  This means that we can write 
$[A_{\hat f}]=[A_f]\pcirc [K]$, where 
$[K]=[K, f\pcirc j^0_N,\hat f\pcirc j^0_N]$.
But if $\cala(f)=\cala(\hat f)$, the invertible cobordism $K$ must be equivalent to $\bbun_N$, 
i.\,e. there exists
a diffeomorphsim $F:N\times I\to K$ such that $F(x,0)=f(x,0)$ and 
$F(x,1)=\hat f(x,0)$ for all $x\in N$.

Now think of $F$ as an isotopy of embeddings  from $f\pcirc j^0_N$ to
$\hat f\pcirc j^0_N$. By the isotopy extension theorem there exists an 
ambient isotopy $H_t$ of $M\times \bbr$ such that $H_0=\id_{M\times\bbr}$ 
and  $H_1\pcirc f(x,0))=\hat f(x,0)$ for all $x\in N$.

Define $G:N\times \bbr \to N\times \bbr$ by $G=\hat f^{-1}\pcirc H_1\pcirc f$.
Then $G$ is a diffeomorphism such that $G(x,0)=(x,0)$ for all $x\in N$.
Considering $G$ and $\id_{N\times\bbr}$ as tubular neighborhoods of 
$N\times\{0\}$ in $N\times\bbr$, we see that $G$ is isotopic to the identity, 
by uniqueness \cite[Theorem~5.3 in Chapter~4]{Hirsch}.  It follows that
$\hat f$ is isotopic to $\hat f\pcirc G=H_1\pcirc f$, hence also to
$H_0\pcirc f=f$.\smallskip

\sk{2}\noindent (4) It is obvious that $\cala(\id_{M\times \bbr})=\bbun_M$, and it remains to 
prove the composition formula. Let  $f\in \sdiff^+(N, M)$
and $g\in\sdiff^+(P, N)$.
Start by choosing $u\in \bbr$ such that $f(N_u)\subset M\times (0,\infty)$,
and then $v\in \bbr $ such that $g(P_v)\subset N\times (u,\infty)$.  Then
the regions  $A_g$  between $N_u$ and $g(P_v)$,  $A_f$ between
$M_0$ and $f(N_u)$, and $A_{f\pcirc g}$ between $M_0$ and $f\pcirc g(P_v)$ can be used
to define $\cala(g)$, $\cala(f)$ and $\cala(f\pcirc g)$, respectively. In other words,
\begin{eqnarray*}
\cala(g) & = & [A_g, j^u_N, g\pcirc j^v_P]\\
\cala(f) & = & [A_f, j^0_M, f\pcirc j^u_N]\\
\cala(f\pcirc g) & = & [A_{f\pcirc g}, j^0_M, f\pcirc g\pcirc j_P]
\end{eqnarray*}
Now observe that we can write $A_{f\pcirc g}$ as $A_f\cup f(A_g)$, and 
consequently
\begin{eqnarray*} 
[A_{f\pcirc g}, j^0_M, f\pcirc g\pcirc j_P] & = & [A_f, j^0_M, f\pcirc j^u_N]
\pcirc [f(A_g), f\pcirc j^u_N, f\pcirc g\pcirc j_P]\\
& = &  [A_f, j^0_M, f\pcirc j^u_N]
\pcirc [A_g, j^u_N, g\pcirc j_P]\\
& = & \cala_f\pcirc \cala_g  \qedhere
\end{eqnarray*}
\end{proof}

We are now interested in another equivalence relation amongst  $\bbr$-diffeomorphism, using decomposability. 
A $\bbr$-diffeomorphism $f\in\sdiff^+(Q,Q')$ is called \dfn{decomposable} if there exists a diffeomorphism
$\varphi\:Q'\to Q$ such that $f$ is isotopic to $\varphi\times\id_\bbr$. 
Fix a manifold $M$ and 
consider pairs $(N,f)$ where $N$ is a smooth closed manifold and 
$f\:N\times\bbr\to M\times\bbr$ is a diffeomorphism. Two such pairs $(N,f)$ and $(\hat N,\hat f)$
are \dfn{equivalent} (notation: $(N,f)\sim (\hat N,\hat f)$)
if $f^\mun\pcirc\hat f$ is decomposable. The set of equivalences classes 
is denoted by $\cald(M)$. Note that $(N,f)$ is decomposable if and only if
$(N,f)\sim (M,\id)$.

\begin{Remark}\label{R.sdi+}\rm
The above definition of $\cald(M)$ is equivalent to the one presented in the introduction, where
$\bbr$-diffeomorphisms were not supposed to preserve ends. Indeed, $\sdiff^+(M,N)$ is a fundamental domain 
for the action of $\{\pm 1\}\approx \{\id_N\times\pm\id_\bbr\}$ by precomposition. 
\end{Remark}

\begin{Theorem}\label{T.ic}
Let $M$ be a smooth closed manifold.
The correspondence $(N,f) \mapsto [A_f[$ induces a bijection
$$
B\:\cald(M)\fl{\approx}\calb(M) \, .
$$
Moreover, $B(N,f)=[M\times I[$ if and only if $f$ is decomposable.
\end{Theorem}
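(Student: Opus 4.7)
My plan is to combine Theorem~\ref{T.ic0} (which, for each closed $N$, gives a bijection $\cala\:\pi_0(\sdiff^+(M,N))\to\cob^*(M,N)$ compatible with composition) with Lemma~\ref{L.partBM} (which identifies $\calb(M)$ with $\coprod_{N\in\calm_n^0}\cob^*(M,N)/\aut(N)$) and Example~\ref{L.BMCf} (which characterizes the trivial class $[M\times I,j_M^0[$). The goal is to show that the decomposability equivalence on $\cald(M)$ transports, through the disjoint union of the $\cala$'s, precisely to the two identifications used in Lemma~\ref{L.partBM}: the quotient by the right $\aut(N)$-action, and the freedom to replace the second end $N$ by a diffeomorphic manifold.

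The technical heart is a direct inspection of the construction of $A_f$ from the proof of Theorem~\ref{T.ic0}. If $\varphi\:\hat N\to N$ is a diffeomorphism and $\hat f=f\pcirc(\varphi\times\id_\bbr)$, then taking $u=0$ in the construction gives $\hat f(\hat N\times\{0\})=f(N\times\{0\})$, so $A_{\hat f}$ equals $A_f$ as a subset of $M\times\bbr$; only the second-end identification changes, from $f\pcirc j_N^0$ to $(f\pcirc j_N^0)\pcirc\varphi$. When $\hat N=N$ this is exactly the right $\aut(N)$-action of $\varphi$ on $\cala(f)\in\cob^*(M,N)$; when $\hat N\neq N$ the effect is to land on a cobordism with the other end. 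In either case the class $[A_f,j_M[\in\calb(M)$ is preserved, so $B$ is well-defined. Surjectivity follows from the surjectivity of $\cala$ together with Lemma~\ref{L.partBM}.

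For injectivity, suppose $B(N,f)=B(\hat N,\hat f)$. The map $e$ from Lemma~\ref{L.partBM} forces $N\di\hat N$, and using the decomposable adjustment $(\hat N,\hat f)\sim(N,\hat f\pcirc(\psi\times\id_\bbr))$ for a diffeomorphism $\psi\:N\to\hat N$ I may assume $\hat N=N$. The injectivity of $\alpha_{M,N}$ then yields $\varphi\in\aut(N)$ with $\cala(\hat f)=\cala(f)\cdot\varphi$; combined with the calculation above and the bijectivity of $\cala$, this produces an isotopy $\hat f\simeq f\pcirc(\varphi\times\id_\bbr)$, hence $(N,f)\sim(N,\hat f)$. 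The ``moreover'' clause is then immediate: by Example~\ref{L.BMCf}, $B(N,f)=[M\times I,j_M^0[$ iff $\cala(f)=[C_\psi]$ for some diffeomorphism $\psi\:M\to N$; by the identification $\cala(\psi^{-1}\times\id_\bbr)=[C_\psi]$ (an instance of the calculation above, via Lemma~\ref{L.mapscob1}) and the bijectivity of $\cala$, this is in turn equivalent to $f$ being isotopic to $\psi^{-1}\times\id_\bbr$, i.e.\ decomposable. The main obstacle is the bookkeeping of second-end identifications: matching the right $\aut(N)$-action on $\cob^*(M,N)$ with precomposition by $\varphi\times\id_\bbr$ on the $\sdiff^+$-side, and verifying that switching representatives of the diffeomorphism class of $N$ lifts to a genuinely decomposable modification of $f$.
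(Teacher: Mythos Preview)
Your proposal is correct and follows essentially the same approach as the paper: both reduce the theorem to the bijection $\cala$ of Theorem~\ref{T.ic0} together with the partition of Lemma~\ref{L.partBM}, the key observation being that precomposition by $\varphi\times\id_\bbr$ on the $\sdiff^+$-side corresponds exactly to the right $\aut(N)$-action on $\cob^*(M,N)$. The paper packages this into a single commutative diagram and leaves the ``moreover'' clause implicit (it follows at once from bijectivity since $B(M,\id_{M\times\bbr})=[M\times I[$ and $f$ decomposable means precisely $(N,f)\sim(M,\id)$), whereas you spell out well-definedness, surjectivity, injectivity and the ``moreover'' separately---but the substance is identical.
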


\begin{proof}
Actually, the map $B$ is induced from the bijection $\cala$ of \thref{T.ic0}. 
As in \lemref{L.partBM}, let $\calm_n^0$ be a set of representatives of the diffeomorphism classes of closed manifolds of dimensioni $n$.
Consider the commutative diagram
\beq{T.ic-dia}
\dia{
\coprod_{N\in\calm_n^0}\pi_0(\sdiff^+(M,N)) \ar[r]^{\dcup\cala}_\approx \ar@{>>}[d]  &
\coprod_{N\in\calm_n^0} \cob^*(M,N) \ar@{>>}[d] \\
\coprod_{N\in\calm_n^0}\pi_0(\sdiff^+(M,N))\Big/\aut(N) \ar[r]^(.52){\dcup\bar\cala}_(.52)\approx \ar[d]^\approx &
\coprod_{N\in\calm_n^0} \cob^*(M,N)\Big/\aut(N) \ar[d]^\approx_\alpha \\
\cald(M) \ar[r]^{B}  &  \calb(M)
}
\eeq
The map $\dcup\cala$ is a bijection by \thref{T.ic0}. It intertwines the right-actions of $\aut(M)$ on $\cob^*(M,N)$ of \lemref{L.partBM}
with the ones defined on $\pi_0(\sdiff^+(M,N))$ by pre-composition using the inclusion 
$\aut(N)\to\sdiff^+(N,N)$ given by $\varphi\mapsto \varphi\times\id_\bbr$. The latter corresponds to the equivalence relation $\sim$
(note that $N\di \hat N$ if $(N,f)\sim (\hat N,\hat f)$).
That the map $\alpha$ is a bijection is the statement of \lemref{L.partBM}. Thus, the map $B$ is bijective.  
\end{proof}

\begin{Remark}\label{R.ic}\rm 
From part~(2) of the proof of \thref{T.ic0}, it follows that  $(N,f)\sim (N,g_{\scr M}^\mun\pcirc g_{\scr N})$,
where $g_{\scr M}$ and $g_{\scr N}$ are the diffeomorphisms 
constructed in in~\eqref{E.invcob2}. 
\end{Remark}

Thanks to \proref{P.BSurf}, \proref{P.BS4} and \corref{C.BS3}, \thref{T.ic} admits the following corollary.

\begin{Corollary}
Any diffeomorphism $f\:N\times\bbr\to M\times\bbr$ is decomposable if $\dim M\leq 2$. 
When $N=M=S^n$ with $n=3,4$, this is true if and only if the smooth Poincar\'e conjecture 
is true in dimension $4$.
\mancqfd
\end{Corollary}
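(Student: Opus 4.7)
The plan is to reduce the decomposability question to the cardinality of $\calb(M)$ and then invoke the structural results already established. By \thref{T.ic}, the map $B\colon\cald(M)\to\calb(M)$ is a bijection with $B(N,f)=[M\times I[$ precisely when $f$ is decomposable. Consequently, ``every $\bbr$-diffeomorphism $f\colon N\times\bbr\to M\times\bbr$ is decomposable'' is equivalent to the statement that $\calb(M)$ is reduced to the single element $[M\times I[$.

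With this reduction in hand, the case $\dim M\leq 2$ follows at once from \proref{P.BSurf}, which asserts that $\calb(M)$ is a singleton whenever $\dim M\leq 2$. For $M=S^4$, the equivalence with the smooth Poincaré conjecture in dimension $4$ is literally the content of \proref{P.BS4}, so nothing further is required in that case.

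For $M=S^3$, I would argue both directions using \corref{C.BS3} together with \proref{P.ihcob}. If the smooth Poincaré conjecture holds in dimension $4$, then by \corref{C.BS3}(a) every smooth h-cobordism $(W,S^3,N)$ is diffeomorphic to $S^3\times I$ relative to $S^3$; since invertible cobordisms are h-cobordisms (\proref{P.ihcob}), every element of $\calb(S^3)$ is $[S^3\times I[$. Conversely, starting from a smooth homotopy 4-sphere $\Sigma$, the recipe in the proof of \corref{C.BS3} produces $W=\Sigma\setminus\mathrm{int}(D_0^4\dcup D_1^4)$ as a smooth h-cobordism from $S^3$ to $S^3$; once one certifies $W$ as an element of $\calb(S^3)$, the hypothesis forces $W\di S^3\times I\ (\rel S^3)$, so $\Sigma\di D^4\cup_h D^4$ for some $h\in\aut(S^3)$, and Cerf's theorem delivers $\Sigma\di S^4$.

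The only genuinely delicate step is exhibiting $W$ as an \emph{invertible} (and not merely h-) cobordism, since \ref{h3inv} is open for general closed 3-manifolds. The natural candidate for an inverse is $\bar W$: the composition $W\pcirc\bar W$ is identified with $\Sigma\#\bar\Sigma$ minus two open disks, and one verifies invertibility by checking $\Sigma\#\bar\Sigma\di S^4$ for a smooth homotopy 4-sphere $\Sigma$ (so that $W\pcirc\bar W\di S^3\times I\ \rel S^3$), and similarly for $\bar W\pcirc W$. This is the main obstacle in the proof; granting it, the chain $\calb(S^3)=\{*\}\Rightarrow W\di S^3\times I\ \rel S^3\Rightarrow \Sigma\di S^4$ closes the argument and gives the claimed equivalence for $n=3$.
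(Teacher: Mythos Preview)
Your strategy matches the paper's: reduce via \thref{T.ic} to whether $\calb(M)$ is a singleton, then invoke \proref{P.BSurf}, \proref{P.BS4}, and \corref{C.BS3}. The cases $\dim M\leq 2$ and $M=S^4$ go through as you say.

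For $M=S^3$ you have correctly isolated the delicate point: the implication ``$\calb(S^3)$ a singleton $\Rightarrow$ smooth Poincar\'e in dimension~$4$'' needs the h-cobordism $W=\Sigma\setminus\ind(D_0^4\dcup D_1^4)$ to be \emph{invertible}, so that it actually represents a class in $\calb(S^3)$. Your proposed route---verify $\Sigma\,\sharp\,\bar\Sigma\di S^4$ so that $W\pcirc\bar W\di S^3\times I$---does not close the gap: the statement that every smooth homotopy $4$-sphere has a connected-sum inverse is itself open and essentially of the same strength as the smooth Poincar\'e conjecture you are trying to deduce. So the argument remains incomplete, as you yourself acknowledge. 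The paper does not address this point either; it simply cites \corref{C.BS3}, which is stated for h-cobordisms, while the passage from h-cobordisms to invertible cobordisms for $3$-manifolds is precisely Question~\ref{h3inv}, recorded there as open.
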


The bijection $B\:\cald(M)\to\calb(M)$ of \thref{T.ic} may be composed with the map $\calt\: \calb(M)\to\wh(M)$,
associating to $W$ its Whitehead torsion $\tau(W,M)$. This gives a map $T\:\cald(M)\to\wh(M)$.
By \thref{T.hcob}, $\calt$ is a bijection when $n\geq 5$. Thus, \thref{T.ic} has the following corollary.

\begin{Corollary}\label{C1.ic}
Let $M$ be a smooth closed manifold of dimension $\geq 5$.
Then, the map $T\:\cald(M)\to\wh(M)$ is a bijection. 
Moreover, $T(N,f)=0$ if and only if $f$ is decomposable. 
\end{Corollary}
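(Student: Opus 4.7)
The plan is to obtain the corollary by simply composing two bijections that are already available, so the work is almost entirely bookkeeping. By definition $T = \calt \pcirc B$, where $B\:\cald(M)\to\calb(M)$ is the bijection provided by \thref{T.ic} and $\calt\:\calb(M)\to\wh(M)$ is the Whitehead torsion map from \thref{T.hcob}. Since $\dim M \geq 5$, part~(i) of \thref{T.hcob} asserts that $\calt$ is a bijection. Therefore $T$ is a bijection as a composition of bijections.

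For the characterization of the kernel, I would first note that $\calt([M\times I[) = \tau(M\times I, M) = 0$, since the trivial cobordism deformation-retracts onto $M$ with no torsion (this is also immediate from part~(iii) of \thref{T.hcob}). Because $\calt$ is a bijection, $T(N,f) = \calt(B(N,f))$ equals $0$ in $\wh(M)$ if and only if $B(N,f) = [M\times I[$ in $\calb(M)$. By the second assertion of \thref{T.ic}, this last equality holds if and only if $f$ is decomposable, which completes the proof.

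I do not expect any serious obstacle here: all of the content is carried by \thref{T.ic} and \thref{T.hcob}, both proven earlier, and the argument is a one-line diagram chase. The only place where one might pause is to confirm that $[M\times I[$ is indeed the unique preimage of $0$ under $\calt$, but this is precisely what part~(iii) of \thref{T.hcob} gives (up to equivalence in $\calb(M)$, the trivial cobordism is the unique representative with vanishing torsion).
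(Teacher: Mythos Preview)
Your proof is correct and matches the paper's approach exactly: the paper states the corollary as an immediate consequence of composing the bijection $B$ of \thref{T.ic} with the bijection $\calt$ of \thref{T.hcob}, and you have simply spelled out this composition together with the identification of the ``kernel'' via part~(iii) of \thref{T.hcob} and the second assertion of \thref{T.ic}.
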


\corref{C1.ic} implies Theorem~D and Corollary~E of the introduction.
Another immediate consequence is the following:

\begin{Corollary}\label{C.xs1}
Let $M$ be a closed manifold and let $K$ be a closed manifold with Euler characteristic 0. 
The map $\cald(M)\to \cald(M\times K)$ given by product with the 
identity map on $K$ is trivial.
\end{Corollary}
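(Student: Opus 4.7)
The plan is to use the bijection $B:\cald(M\times K)\to\calb(M\times K)$ from Theorem~\ref{T.ic}, together with the Whitehead torsion map $\calt$ and the partial product formula \eqref{parProdFor}. Given a class $[N,f]\in\cald(M)$, its image in $\cald(M\times K)$ is represented by $(N\times K,\,f\times\id_K)$, and I need to show this is the trivial class, i.e.\ that $f\times\id_K$ is decomposable.

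The first step is to observe that the assignment $f\mapsto A_f$ commutes with taking products with $K$: the invertible cobordism $A_{f\times \id_K}$ from $M\times K$ to $N\times K$ is naturally identified with $A_f\times K$. This is immediate from the construction in the proof of Proposition~\ref{P.invcob}: if $f(N\times\{u\})\subset M\times(r,s)$, then $(f\times\id_K)(N\times K\times\{u\})=f(N\times\{u\})\times K\subset M\times K\times(r,s)$, and the region between $M\times K\times\{r\}$ and this image inside $M\times K\times\bbr$ is precisely $A_f\times K$.

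Next, I would apply the partial product formula \eqref{parProdFor} to the natural homotopy equivalence $j_M:M\hookrightarrow A_f$, which is a homotopy equivalence by Proposition~\ref{P.ihcob} since $A_f$ is invertible. The formula yields
$$
\calt(A_f\times K,\,j_M\times\id_K)\;=\;\tau(j_M\times\id_K)\;=\;\chi(K)\cdot\tau(j_M)\;=\;0,
$$
since $\chi(K)=0$ by hypothesis. Assuming $\dim(M\times K)\geq 5$, Corollary~\ref{C1.ic} then gives that the map $T:\cald(M\times K)\to\wh(M\times K)$ is a bijection vanishing exactly on decomposable classes, so $f\times\id_K$ is decomposable and its class is trivial in $\cald(M\times K)$.

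The only real content is the compatibility $A_{f\times \id_K}\di A_f\times K$, which is a direct consequence of the explicit construction, so I do not expect any serious obstacle. The corollary is genuinely ``immediate'' from Corollary~\ref{C1.ic} once the product formula has been invoked.
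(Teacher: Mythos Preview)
Your proof is correct and follows essentially the same route as the paper: identify $A_{f\times\id_K}$ with $A_f\times K$, apply the product formula \eqref{parProdFor} to kill the torsion since $\chi(K)=0$, and invoke \corref{C1.ic}. You are right to flag the implicit hypothesis $\dim(M\times K)\geq 5$, which the paper's statement omits but its proof (and yours) requires.
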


In other words: if $f:N\times\bbr\stackrel{\approx}\to M\times\bbr$ is 
a diffeomorphism, then $f\times \id_{K}$ is
isotopic to a diffeomorphism of the form $h\times\id_{\bbr}$, where
$h$ is a diffeomorphism $N\times K\to M\times K$.

\begin{proof} The bijections $\cald(M)\approx \calb(M)\approx \wh(M)$ commute with
product with  $K$. The result then follows by the product formula for 
Whitehead torsion (\ref{parProdFor}).
\end{proof}

Diagram \eqref{T.ic-dia} gives a partition of $\cald(M)$ indexed by 
diffeomorphism classes of manifolds. Particularly interesting is the class
corresponding to $M$ itself, which via 
the bijection $\calb$  corresponds to the {\em inertial} cobordisms:
\begin{equation}\label{ibm}
\cali\calb(M)= \cob^*(M,M)/\Di(M) \approx \pi_0(\Di_+(M\times\bbr))/\Di(M).
\end{equation} 

\begin{Corollary}\label{C2.ic}
Let $M$ be a smooth closed manifold. The following assertions are equivalent.
\begin{itemize}
\item[(a)] Any automorphism $g\:M\times\bbr\to M\times\bbr$ is decomposable.
\item[(b)] $\cali\calb(M)$ has one element. 
\end{itemize}
Moreover, if $\dim M\geq 5$, Assertion~(b) may be replaced by
\begin{itemize}
\item[(b')] $I(M)=\{0\}$. 
\end{itemize}
\end{Corollary}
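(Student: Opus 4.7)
The strategy is to transport everything through the bijection $B\:\cald(M)\fl{\approx}\calb(M)$ of \thref{T.ic} and then isolate the inertial piece of the partition in diagram~\eqref{T.ic-dia}. The plan is to identify the preimage $B^{\mun}(\cali\calb(M))$ explicitly as the set of equivalence classes in $\cald(M)$ representable by a pair $(M,g)$, and then to translate the one-point condition on $\cali\calb(M)$ into the statement that every such $g$ is decomposable.

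First, I would observe that under $B$ the partition of $\calb(M)$ by the map $e$ of \eqref{D.mape} corresponds exactly to the partition of $\cald(M)$ by the diffeomorphism type of the source manifold $N$, since if $(N,f)\sim(\hat N,\hat f)$ then $f^\mun\pcirc\hat f$ is isotopic to $\psi\times\id_\bbr$ for some diffeomorphism $\psi\:\hat N\to N$, forcing $N\di\hat N$. Thus the classes $[(N,f)]$ with $N\di M$ form exactly $B^\mun(\cali\calb(M))$. Next, I would show that every such class has a representative of the form $(M,g)$: given $(N,f)$ with $N\di M$, pick any diffeomorphism $\varphi\:M\to N$; then $\varphi\times\id_\bbr$ is decomposable by definition, so $(N,f)\sim(M,f\pcirc(\varphi\times\id_\bbr))$. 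Finally, from the definition of the equivalence relation, $(M,g)\sim(M,\id_{M\times\bbr})$ if and only if $\id^\mun\pcirc g=g$ is decomposable.

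Putting these three observations together yields $(a)\Leftrightarrow(b)$: the set $\cali\calb(M)$ has one element if and only if every class in $B^\mun(\cali\calb(M))$ equals the class of $(M,\id)$, if and only if every automorphism $g$ of $M\times\bbr$ satisfies $(M,g)\sim(M,\id)$, if and only if every such $g$ is decomposable. (The trivial cobordism $[M\times I[\in\cali\calb(M)$ ensures this single element is the trivial class.)

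For the equivalence $(b)\Leftrightarrow(b')$ when $\dim M\geq 5$, I would invoke \thref{T.hcob}(i), which says $\calt\:\calb(M)\to\wh(M)$ is a bijection. Its restriction to $\cali\calb(M)$ is therefore injective with image $I(M)$, and since the trivial cobordism provides $0\in I(M)$, the set $\cali\calb(M)$ has exactly one element precisely when $I(M)=\{0\}$. The argument is a direct assembly of \thref{T.ic} and \thref{T.hcob}; the only mildly subtle step is the claim that classes with $N\di M$ admit a representative $(M,g)$, which I expect to be the main (and only) technical point to verify carefully, since it requires noting that pre-composition by the decomposable diffeomorphism $\varphi\times\id_\bbr$ does not change the equivalence class in $\cald(M)$.
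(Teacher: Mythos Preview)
Your proposal is correct and follows essentially the same route as the paper, which does not write out a proof but instead derives formula~\eqref{ibm}, namely $\cali\calb(M)\approx \pi_0(\sdiff^+(M,M))/\aut(M)$, immediately before stating the corollary; your three observations are precisely an explicit unpacking of that identification. The step you flag as ``mildly subtle'' (replacing $(N,f)$ with $(M,f\pcirc(\varphi\times\id_\bbr))$) is in fact immediate from the definition of $\sim$, so no further care is needed there.
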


Manifolds $M$ such that $I(M)=\{0\}$ may be found in \exref{IneqWh}.

\begin{Example}\rm
Given two diffeomorphisms $f,g\:N\times\bbr\to M\times\bbr$, it is possible that $f^\mun\pcirc g$ is 
decomposable but not $g\pcirc f^\mun$. An example of this sort may be obtained using \corref{C2.ic}
and part~(3) of \exref{IneqWh}.
\end{Example}

In formula (\ref{ibm})  the second action is  right multiplication by the image of the group
homomorphism $\pi_0(\Di(M))\to \pi_0(\sdiff^+(M))$ induced by $\varphi\mapsto \varphi\times\id_\bbr$, and this corresponds
to the map (also homomorphism!) $\pi_0(\Di(M))\to \cob^*(M,M)$ given by 
$f\mapsto C_{f^{-1}}$ (mapping cylinder). As seen in \exref{ex.conc}, this map is not 
injective, but has as kernel the isotopy classes of diffeomorphisms 
{\em concordant} to the identity.  This leads to the following result, first
proved by W. Ling in the topological category \cite{LingW}.  Let 
$C(M)=\{f\in \Di(M\times I)\,|\, f|M\times\{0\}=\id\}$ be the space of 
concordances of $M$.  Then evaluation on $M\times\{1\}$ gives rise to a 
fibration (over a union of components) $ C(M)\to \Di(M)$, with fiber
$\Di(M\times I,\mathrm{rel}\,M\times\partial I)$.

\begin{Proposition}\label{P.ling}
The long, exact sequence of homotopy groups of this fibration ends as follows:
{\small
$$
\dia{
\cdots\ar[r] & 
 \pi_0(C(M)) \ar[r]&
\pi_0(\Di(M))\ar[r]&
\pi_0(\sdiff^+(M))\ar@{->>}[r] &
\cali\calb(M)} 
$$
}
\end{Proposition}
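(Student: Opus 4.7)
The plan is to build the sequence in two stages. First, I would apply the standard long exact sequence of homotopy groups to the fibration
$$
\Di(M\times I,\rel M\times\partial I)\longrightarrow C(M)\longrightarrow \Di(M),
$$
which gives exactness through the term $\pi_0(\Di(M))$. Then I would splice this onto the map $\pi_0(\Di(M))\to \pi_0(\sdiff^+(M))$ induced by $\varphi\mapsto \varphi\times\id_\bbr$, and onto the map $\pi_0(\sdiff^+(M))\to \cali\calb(M)$ obtained from Theorem \ref{T.ic0} followed by the quotient $\cob^*(M,M)\to\cob^*(M,M)/\Di(M)=\cali\calb(M)$. Surjectivity onto $\cali\calb(M)$ is immediate, since $\cala$ is already surjective onto $\cob^*(M,M)$.

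Next, I would pin down the map $\pi_0(\Di(M))\to \pi_0(\sdiff^+(M))$ in cobordism-theoretic terms. Applying the construction of $\cala$ to $\varphi\times\id_\bbr$, the relevant slab between $M_r$ and $(\varphi\times\id_\bbr)(M_u)=M_u$ is $M\times[r,u]$ with the incoming embedding $j_M^r$ and outgoing embedding $j_M^u\pcirc\varphi$. By \lemref{L.mapscob1} this equals $[C_{\varphi^{-1}}]$, so $\cala(\varphi\times\id_\bbr)=[C_{\varphi^{-1}}]$. In particular, the $\Di(M)$-orbit of $\bbun_M$ inside $\cob^*(M,M)$ is exactly $\{[C_{\varphi^{-1}}]:\varphi\in\Di(M)\}$, which is the image of $\pi_0(\Di(M))\to\cob^*(M,M)$. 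Exactness at $\pi_0(\sdiff^+(M))$ follows by transporting via the bijection $\cala$: an $\bbr$-diffeomorphism maps to $\bbun_M\in\cali\calb(M)$ iff its $\cala$-image lies in this orbit iff it is isotopic to some $\varphi\times\id_\bbr$.

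For exactness at $\pi_0(\Di(M))$, I would argue in both directions using \exref{ex.conc}. If $F\in C(M)$ has $F|_{M\times\{1\}}=f$, extend $F$ by $\id$ on $M\times(-\infty,0]$ and by $f\times\id$ on $M\times[1,\infty)$ to obtain $\tilde F\in\sdiff^+(M,M)$. Conjugating by the translation $T_s(x,t)=(x,t+s)$ gives an isotopy $\tilde F_s=T_{-s}\pcirc\tilde F\pcirc T_s$ connecting $\tilde F$ both to $\id_{M\times\bbr}$ (as $s\to-\infty$) and to $f\times\id_\bbr$ (as $s\to+\infty$), so $f\times\id_\bbr$ is isotopic to the identity. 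Conversely, if $f\times\id_\bbr$ is isotopic to $\id_{M\times\bbr}$, then $\cala(f\times\id_\bbr)=[C_{f^{-1}}]=\bbun_M$ in $\cob^*(M,M)$, and \exref{ex.conc} delivers a concordance from $\id_M$ to $f^{-1}$ (equivalently to $f$), providing an element of $C(M)$ mapping to the class of $f$ in $\pi_0(\Di(M))$.

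The main obstacle is the exactness at $\pi_0(\Di(M))$: one has to keep the two meanings of ``triviality'' apart, namely being trivial as a class in $\pi_0(\sdiff^+(M))$ versus being trivial in $\pi_0(\Di(M))$. The conjugation-by-translation trick gives a concrete isotopy in the larger group, while \exref{ex.conc} and the bijection $\cala$ of \thref{T.ic0} provide the converse passage from an isotopy in $\sdiff^+$ back to a genuine concordance in $\Di(M\times I)$. Once these two ingredients are in place, the rest is formal bookkeeping with the fibration sequence.
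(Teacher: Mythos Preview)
Your argument is correct and follows essentially the same route as the paper: both use the fibration sequence for the early terms, identify the map $\pi_0(\Di(M))\to\pi_0(\sdiff^+(M))$ with $f\mapsto[C_{f^{-1}}]$ via $\cala$, and invoke \exref{ex.conc} to match its kernel with the image of $\pi_0(C(M))$. The only difference is your translation-conjugation isotopy for the forward inclusion at $\pi_0(\Di(M))$; the paper instead gets both inclusions at once from the fact that $\cala$ is a \emph{bijection} (\thref{T.ic0}), so $[C_{f^{-1}}]=\bbun_M$ is equivalent to $f\times\id_\bbr$ being isotopic to $\id$---your explicit construction is pleasant but redundant, and you should note that the limiting argument as $s\to\pm\infty$ needs the weak $C^\infty$ topology to make sense.
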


\begin{proof} The last map in the ordinary long exact sequence  is the homomorphism $\pi_0(C(M))\to \pi_0(\Di(M))$ 
with image the
set of isotopy classes of diffeomorphisms concordant to the identity, which we just saw is also the kernel of 
the homomorphism $\pi_0(\Di(M))\to \pi_0(\sdiff^+(M))$.  The last map is just the quotient map onto the
set of left cosets.
\end{proof}, 

\begin{Remark}{\rm
It is known that $\Di(M\times\bbr)$ is a non-connected delooping of 
$\Di(M\times I,\mathrm{rel}\,M\times\partial I)$. (See e.\,g. \cite{WW1}.)
\proref{P.ling} gives more information on components.}
\end{Remark}

We now use the relation of concordance to give a classification
of $1$-diffeo\-morphisms which is coarser than isotopy. 
Following the pattern above, we first say that a $\bbr$-diffeomorphism
$f\in\sdiff^+(Q',Q)$ is \dfn{c-decomposable} if there exists 
a diffeomorphism 
$\varphi\:Q'\to Q$ such that $f$ is concordant to $\varphi\times id_\bbr$. 
Then $(\hat N,\hat f)$ and $(N,f)$ are called
\dfn{c-equivalent} (notation: $(\hat N,\hat f)\sim_c (N,f)$) if 
$f^\mun\pcirc\hat f$ is c-decomposable. 
Of course, $(\hat N,\hat f)\sim (N,f)$ implies $(\hat N,\hat f)\sim_c (N,f)$;
therefore, the set $\cald_c(M)$  of these c-equivalences classes is a quotient of $\cald(M)$.

Using the the bijection $B$ of \thref{T.ic}, the equivalence relation $\sim_c$ on $\cald(M)$ 
may be transported to $\calb(M)$, giving rise to an equivalence relation on 
$\calb(M)$,  also denoted $\sim_c$.  We want to prove that $\sim_c$ can
be described in terms of the relation of {\em concordance of invertible 
cobordisms}, defined in \remref{concordance}.

Recall again the partition

$$\dia{
\coprod_{N\in\calm_n^0} \cob^*(M,N)\Big/\Di(N) \ar[r]^(0.72){\alpha}_(0.72){\approx} & \calb(M) . 
}$$
of \lemref{L.partBM}.  In \remref{concordance} the relation of (invertible) concordance is defined on each set $\cob^*(M,N)$, 
and the action of $\Di(N)$ descends to the set of concordance classes $\overline{\cob^*}(M,N)$. 
Set 
\beq{E.relcdef}
\calb_c(M)=\coprod_{N\in\calm_n^0} \overline{\cob^*}(M,N)\Big/\aut(N).
\eeq

Like \thref{T.ic}, the following result is valid in all dimensions. 

\begin{Theorem}\label{T.ic2}
Let $M$ be a smooth closed manifold.
Then, the bijection $B\:\cald(M)\to\calb(M)$ of \thref{T.ic}
descends to a bijection 
$$
B_c\:\,\cald_c(M)\fl{\approx}\calb_c(M) \, .
$$
\end{Theorem}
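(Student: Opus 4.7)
The strategy is to enhance \thref{T.ic0} so that the bijection $\cala$ intertwines concordance of $\bbr$-diffeomorphisms (in $\sdiff^+(M,N)$) with invertible concordance of cobordisms (in $\cob^*(M,N)$); the theorem then follows by passing to the $\aut(N)$-quotient exactly as in diagram~\eqref{T.ic-dia}. Concretely, I must prove that $(N,f)\sim_c(\hat N,\hat f)$ if and only if the associated cobordisms $A_f$ and $A_{\hat f}$ represent the same class in $\calb_c(M)$, and then surjectivity of $B_c$ is inherited from that of $B$ in \thref{T.ic}.

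For the forward direction, I would start from $(N,f)\sim_c(\hat N,\hat f)$, choose $\varphi\:\hat N\to N$ and a concordance $F\:\hat N\times\bbr\times I\to N\times\bbr\times I$ from $f^\mun\pcirc\hat f$ to $\varphi\times\id_\bbr$. Setting $G=(f\times\id_I)\pcirc F$ yields a concordance from $\hat f$ to $f\pcirc(\varphi\times\id_\bbr)$. Choosing $u$ large enough that $G(\hat N\times\{u\}\times I)\subset M\times(0,\infty)\times I$ (possible by compactness of $\hat N\times I$), I let $X\subset M\times\bbr\times I$ be the region bounded by $M\times\{0\}\times I$ and $G(\hat N\times\{u\}\times I)$. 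The boundary of $X$ decomposes into the two ends $A_{\hat f}$ and $A_{f\pcirc(\varphi\times\id_\bbr)}$ together with the two sides $M\times\{0\}\times I$ and $G(\hat N\times\{u\}\times I)\approx\hat N\times I$, exactly matching the definition of a concordance of cobordisms from \ref{concordance}. Invertibility of $X$ follows from the relative analogue of \proref{P.invcob2}: the diffeomorphism $G$ identifies $M\times[0,\infty)\times I - X$ with $\hat N\times[u,\infty)\times I$, giving the required product structure. Since the $\aut(\hat N)$-action induced by $\varphi$ is absorbed when passing to $\calb(M)$, this gives $[A_f[\sim_c[A_{\hat f}[$ in $\calb_c(M)$.

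For the converse, I would reverse-engineer this construction via a parameterized Stallings argument. Given an invertible concordance $X$ between representatives of $[A_f[$ and $[A_{\hat f}[$ (after identifying $\hat N$ with $N$ via some $\varphi\in\aut(\hat N)$), let $X^\mun$ be its relative inverse. Gluing infinitely many copies $\cdots X_i\pcirc X_i^\mun\pcirc X_{i+1}\pcirc X_{i+1}^\mun\cdots$ along the $M$-direction as in the proof of \proref{P.invcob}, but carrying the $I$-parameter throughout, produces a $5$-manifold $W$ equipped with two product diffeomorphisms $g_M\:M\times\bbr\times I\to W$ and $g_{\hat N}\:\hat N\times\bbr\times I\to W$ (built from the two decompositions of the infinite composite). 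The composite $F=(f^\mun\times\id_I)\pcirc g_M^\mun\pcirc g_{\hat N}$, after normalization, is a concordance from $f^\mun\pcirc\hat f$ to $\varphi\times\id_\bbr$, establishing $(N,f)\sim_c(\hat N,\hat f)$ as required.

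The main obstacle is executing the parameterized Stallings construction in the converse direction with sufficient smoothness control: ensuring that the infinitely many gluings can be performed smoothly and compatibly in the $I$-parameter, and that the resulting composite restricts at $t=0$ and $t=1$ to maps isotopic to $\hat f$ and $f\pcirc(\varphi\times\id_\bbr)$ respectively (the isotopy, rather than equality, being handled as in steps~(1) and~(3) of the proof of \thref{T.ic0}). The corner and smoothing bookkeeping at the boundary $t\in\{0,1\}$ is the most delicate point, but mirrors the unparameterized argument of \proref{P.invcob} with an extra $I$-factor carried along throughout.
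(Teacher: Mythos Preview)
Your proposal is correct and follows essentially the same approach as the paper: the forward direction parameterizes the construction of $A_f$ over $I$ to produce an invertible concordance of cobordisms, and the converse runs the Stallings infinite-composite argument of~\eqref{E.invcob} with an extra $I$-parameter (the paper's~\eqref{E.invcobxI}). The only cosmetic difference is that the paper obtains a concordance between auxiliary maps $F_0,F_1$ and then invokes \remref{R.ic} to relate these to $f,\hat f$ via the $\sim$-relation, whereas you absorb that step into the phrase ``after normalization''; your identification of the boundary bookkeeping at $t\in\{0,1\}$ as the delicate point is exactly right.
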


\begin{proof}
Given part~(i) of the proof of \thref{T.ic}, in order to define $B_c$, we just need to prove that
when $f,\hat f\:N\times\bbr\to M \times\bbr$ are concordant, then 
$[A_f]=[A_f,j_M^r\dcup f\pcirc j_N^0]$ and $[A_{\hat f}]=[A_{\hat f},j_M^r\dcup \hat f\pcirc j_N^0]$
represent the same class in $\overline{\cob^*(M,N)}$. 
Let $F\:I\times N\times\bbr\to I\times M\times\bbr$ be a concordance between $f$ and $\hat f$. 
The construction of $A_f$, $B_f$, $A_{\hat f}$ and $B_{\hat f}$ may be done globally in $I\times N$
and $I\times M$. This would provide cobordisms $A_F$ between $A_f$, $A_{\hat f}$, and
$B_F$ between $B_f$, $B_{\hat f}$ which are inverse of one another, which is what we need.

The map $B_c$ is thus well defined. It is surjective, since $B$ is.
To prove that $B_c$ is injective, we use a relative version of the proof of 
surjectivity in \thref{T.ic0}.  Let $(N,f)$ and $(\hat N,\hat f)$ represent classes in $\cald(M)$ 
such that $B(N,f)\sim_cB(\hat N,\hat f)$. Since the relation 
$\sim_c$ preserves $\cob^*(M,N)$, this means that there is a diffeomorphism 
$\gamma\:\hat N\to N$ such that
$B(\hat N,\hat f)\sim B(N,\hat f\pcirc(\gamma\times \id_\bbr))$. This permits us to 
assume that $\hat N = N$. In this case, $B(N,f)$ and $B(N,\hat f)$ are represented by 
$[A_f]$ and $[A_{\hat f}]$ in $\cob^*(M,N)$ such that
$[A_f]$ is invertibly concordant to [$A_{\hat f}]\,\beta$ for some $\beta\in \aut(N)$. 
Using again that $(N,\hat f)\sim (N,\hat f\pcirc(\beta\times \id_\bbr))$, 
we may assume that $[A_{\hat f}]=[A_f]$ in $\overline{\cob^*}(M,N)$.
 
Let $[K]$ be a concordance between $A_f$ and $A_{\hat f}$, with inverse $[L]$ from  $[B_f]$ and $[B_{\hat f}]$.
Let $K_i$ and $L_i$ ($i\in\bbz$) be copies of $K$ and $L$. As in~\eqref{E.invcob}, we form the manifold
\beq{E.invcobxI}
\begin{array}{rcl}
X &=& \cdots\pcirc\, (K_i\pcirc L_i)\, \pcirc \,(K_{i+1}\pcirc L_{i+1}) \,\pcirc \cdots \\ &=& 
\cdots\pcirc\, (L_i\pcirc K_{i+1})\,\pcirc\,  (L_{i+1}\,\pcirc K_{i+2})        \pcirc\cdots
\end{array}
\eeq
Using convenient diffeomorphisms $K_i\pcirc L_i\di I\times M\times I$ and 
$L_i\pcirc K_{i+1}\di I\times N\times I$, one gets, as in~\eqref{E.invcob2}, two diffeomorphisms
\beq{E.invcob2xI}
\dia{
I\times M\times\bbr \ar[r]^(.65){G_M}_(.65)\approx & X \ar@{<-}[r]^(.42){G_N}_(.42)\approx & 
I\times N\times\bbr
}
\eeq

The diffeomorphism $F=G_{M}^\mun\pcirc G_N\: I\times N\times\bbr\to I\times M\times\bbr$ 
restricts to diffeomorphisms $F_i\:\{i\}\times N\times\bbr\to \{i\}\times M\times\bbr$ ($i=0,1$)
and $F$ constitutes a concordance between $F_0$ and $F_1$. Therefore, $(N,F_0)\sim_c (N,F_1)$.
By \remref{R.ic}, one has $(\{0\}\times N,F_0)\sim (N,f)$ and $(\{1\}\times N,F_1)\sim (N,\hat f)$. 
Therefore, $(N,f)\sim_c (N,\hat f)$, which proves the injectivity of $B_c$. 
\end{proof}

We now compute $\calb_c(M)$ when $\dim M \geq 5$, using the bijection 
$\calt\: \calb(M) \to \wh(M)$ of \thref{T.hcob}.
As in \exref{IneqWh}, we consider the subgroup $\caln(M)$ of $\wh(M)$ defined by
$$
\caln(M) = \{\tau+ (-1)^{n}\bar\tau\mid \tau\in\wh(M)\}  \, ,
$$
using the involution $\tau\mapsto\bar\tau$ of~\eqref{defInvol}.

The following result  now follows easily from the discussion at the end of
\secref{S.icob}:

\begin{Proposition}\label{P.iccbar}
Let $M$ be a smooth closed manifold of dimension $n\geq 5$.
Then, the bijection $\calt\:\calb(M)\to\wh(M)$ of \thref{T.hcob}
descends to a bijection 
$$
\calt_c\:\,\calb_c(M)\fl{\approx}\wh(M)/\caln(M) \, . 
$$
\end{Proposition}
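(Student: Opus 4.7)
The plan is to verify that under the bijection $\calt: \calb(M)\to \wh(M)$ of Theorem \ref{T.hcob}, the equivalence relation defining $\calb_c(M)$ corresponds exactly to the cosets of $\caln(M)$. For well-definedness of $\calt_c$, note that the $\Di(N)$-action modifies only $j_N$, which does not enter the definition of $\calt$. For concordance, Lemma \ref{L.WhConc} directly gives, for any concordance $(X,J_W,J_{W'})$ from $(W,j_M)$ to $(W',j'_M)$,
\[
\calt(W',j'_M)-\calt(W,j_M)=(j_M)_*^{-1}\bigl(\tau(X,J_W)+(-1)^n\overline{\tau(X,J_W)}\bigr),
\]
which is visibly an element of $\caln(M)$. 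Hence $\calt_c$ is well-defined, and its surjectivity follows at once from the surjectivity of $\calt$.

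For injectivity, assume $\calt(W')-\calt(W)=\nu+(-1)^n\bar\nu$ for some $\nu\in\wh(M)$. The strategy is to build a concordance from $W$ to an intermediate cobordism $W''$ whose Whitehead torsion agrees with $\calt(W')$; the bijectivity of $\calt$ on $\calb(M)$ then forces $[W'']=[W']$ in $\calb(M)$, and hence in $\calb_c(M)$, so composing with $[W]\sim_c[W'']$ finishes the argument. To construct $W''$, I apply the realization half of Theorem \ref{T.hcob} in its relative form — guaranteed by the opening remark of subsection \ref{concordance} — to the $(n+1)$-dimensional manifold with boundary $W$. This yields a rel-boundary invertible cobordism $X$ starting at $W$ with prescribed Whitehead torsion $\tau(X,J_W)=(j_M)_*(\nu)\in\wh(\pi_1 W)\cong\wh(M)$. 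Such an $X$ is precisely a concordance in the sense of subsection \ref{concordance}: its sides automatically have the form $M\times I\cup N\times I$. Lemma \ref{L.WhConc} applied to $X$ then gives $\calt(W'')-\calt(W)=\nu+(-1)^n\bar\nu$, using that the involution commutes with the isomorphism induced by the homotopy equivalence $j_M$ (since the orientation characters of $M$ and $W$ are identified).

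The main obstacle is the appeal to the relative version of the realization theorem: every $\tau\in\wh(\pi_1 W)$ must be the Whitehead torsion of some rel-boundary invertible cobordism starting at $W$. This is exactly the content flagged by the authors in the opening remark of the concordance subsection, and it applies here because $\dim W=n+1\geq 6$ leaves ample room for the handle-theoretic constructions of Section \ref{S.icob} to proceed away from $\partial W$. Granted this, the rest of the argument is a formal assembly of Lemma \ref{L.WhConc} and Theorem \ref{T.hcob}.
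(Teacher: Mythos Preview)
Your proof is correct and follows essentially the same route as the paper's own argument: well-definedness via \lemref{L.WhConc}, surjectivity inherited from $\calt$, and injectivity by realizing the prescribed torsion as a relative (rel-boundary) h-cobordism on $W$, then using \lemref{L.WhConc} again together with the injectivity of $\calt$ on $\calb(M)$. Your explicit remarks on the $\Di(N)$-action and on why a rel-boundary invertible cobordism on $W$ is exactly a concordance are details the paper leaves implicit, but the underlying strategy is identical.
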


\begin{proof}
That $\calt_c$ is well-defined follows from \lemref{L.WhConc}, and 
surjectivity is trivial. Assume now that the torsions of
two invertible cobordisms   $(W,j_M,j_N)$ and $(W',j'_M,j'_N)$
satisfy the equation 
$\tau(W',j'_M)-\tau(W,j_M)=\sigma+(-1)^n\bar\sigma$ for some 
$\sigma\in\wh(M)$, where $n=\dim M$.

There is  a relative h-cobordism $(X,W,V)$
with $\tau(X,W)= j_M{}_*(\sigma)$, where  $V$ is another h-cobordism from
$j_M(M)$ to $j_N(N)$. By \proref{P.ihcob} $X$ and $V$ are both invertible,
and by \lemref{L.WhConc} we have 
$\tau(V,j_M)=\tau(W',j_M)$. By uniqueness of Whitehead torsion,
$[W,j_M[=[V,j_M[\in\calb(M)$. 
\end{proof}

\thref{T.ic2} together with \proref{P.iccbar} implies the following corollary.

\begin{Corollary}\label{C1.icc}
Let $M$ be a smooth closed manifold of dimension $\geq 5$.
Then, the bijection $T\:\cald(M)\to\wh(M)$ of \corref{C1.ic}
descends to a bijection 
$$
T_c\:\cald_c(M)\to\wh(M)/\caln(M) \, .
$$
Moreover, $T_c(N,f)=0$ if and only if $f$ is c-decomposable. 
\mancqfd
\end{Corollary}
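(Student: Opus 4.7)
The plan is to obtain $T_c$ simply as the composition of the two bijections already established, namely $B_c\:\cald_c(M)\fl{\approx}\calb_c(M)$ from \thref{T.ic2} and $\calt_c\:\calb_c(M)\fl{\approx}\wh(M)/\caln(M)$ from \proref{P.iccbar}, and then to verify that this composition indeed descends $T$ and has the claimed vanishing property. Since all the hard work is packaged into those two results, the proof will be essentially formal.

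First I would set $T_c=\calt_c\pcirc B_c$ and note that it is a bijection as a composition of bijections. To see that $T_c$ genuinely descends $T$, recall from the proof of \corref{C1.ic} that $T$ itself is the composition $\calt\pcirc B$, where $B\:\cald(M)\to\calb(M)$ is the bijection of \thref{T.ic}. The two squares
$$
\dia{
\cald(M) \ar[r]^{B}_{\approx} \ar@{>>}[d] & \calb(M) \ar[r]^(0.4){\calt}_(0.4){\approx} \ar@{>>}[d] & \wh(M) \ar@{>>}[d] \\
\cald_c(M) \ar[r]^{B_c}_{\approx} & \calb_c(M) \ar[r]^(0.4){\calt_c}_(0.4){\approx} & \wh(M)/\caln(M)
}
$$
commute by construction of the lower horizontal arrows in \thref{T.ic2} and \proref{P.iccbar}; hence $T_c$ is the desired descended map.

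For the moreover part, $T_c(N,f)=0$ is equivalent, by injectivity of $\calt_c$, to $B_c(N,f)=[M\times I[$ in $\calb_c(M)$, i.\,e. to $B(N,f)\sim_c[M\times I[$ in $\calb(M)$. Transporting back through the bijection $B_c$, this says $(N,f)\sim_c(M,\id_{M\times\bbr})$ in $\cald_c(M)$, which is precisely the definition of $f$ being c-decomposable.

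The only point that requires any attention is the compatibility of the quotient relations across the diagram, but this has already been secured: \thref{T.ic2} was proved by transporting $\sim_c$ from $\cald(M)$ to $\calb(M)$ via $B$, and \proref{P.iccbar} shows that on $\calb(M)$ this transported relation is exactly the relation induced by adding an element of $\caln(M)$ to the Whitehead torsion. Thus no new obstacle arises; the statement is a straightforward consequence of \thref{T.ic2} and \proref{P.iccbar}.
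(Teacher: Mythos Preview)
Your proposal is correct and follows exactly the route the paper intends: the corollary is stated there as an immediate consequence of \thref{T.ic2} and \proref{P.iccbar}, with no further argument given, and you have simply spelled out the formal diagram chase and the ``moreover'' verification that the paper leaves implicit.
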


Recall the inclusion $\caln(M)\subset I(M)$, which is not an equality in 
general. \corref{C1.icc} implies the following result.

\begin{Corollary}
Let $M$ be a smooth closed connected manifold of dimension $\geq 5$.
The following assertions are equivalent.
\begin{itemize}
\item[(a)] Any automorphism $g\:M\times\bbr\to M\times\bbr$ is c-decomposable.
\item[(b)] $\caln(M)=I(M)$.  \mancqfd
\end{itemize}
\end{Corollary}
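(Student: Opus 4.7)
The plan is to translate both statements into conditions on the subgroup $\caln(M) \subset \wh(M)$ by invoking the bijection $T_c$ of \corref{C1.icc}, after which the equivalence becomes essentially a matter of unwinding definitions.

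First I would observe that the reverse inclusion $\caln(M) \subset I(M)$ holds unconditionally, as recalled in \exref{IneqWh}(4): for any invertible cobordism $W$ with $\tau(W,M) = \tau$, the cobordism $W\pcirc \bar W$ is inertial with torsion $\tau + (-1)^n\bar\tau$. Consequently the equality in (b) is equivalent to the single inclusion $I(M) \subset \caln(M)$, which is the form in which I would prove it.

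Next I would identify the image under $T\:\cald(M)\to\wh(M)$ of the subset of $\cald(M)$ consisting of automorphisms of $M\times\bbr$, i.e. of pairs $(M,g)$. Via the bijection $B\:\cald(M)\to\calb(M)$ of \thref{T.ic}, the diagram \eqref{T.ic-dia} partitions $\cald(M)$ according to the diffeomorphism type of the source manifold $N$; the piece sitting over $M$ itself maps bijectively to $\cob^*(M,M)/\aut(M) \approx \cali\calb(M)$. Hence, by definition of $I(M) = \calt(\cali\calb(M))$, the set of values $T(M,g)$ as $g$ ranges over all automorphisms of $M\times\bbr$ is exactly $I(M)$, and conversely every element of $I(M)$ is realized by such an automorphism.

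Finally I would conclude as follows. Statement (a) says that $(M,g)\sim_c (M,\id)$ for every automorphism $g$, which by \corref{C1.icc} is the statement $T_c(M,g)=0$ in $\wh(M)/\caln(M)$ for every such $g$. By the identification of the previous paragraph, this is precisely the statement that every element of $I(M)$ vanishes modulo $\caln(M)$, i.e.\ $I(M)\subset \caln(M)$; combined with the always-valid reverse inclusion this yields (b), and the argument is plainly reversible. I do not expect a real obstacle: the only slightly delicate point is the identification of the $(M,g)$-part of $\cald(M)$ with $I(M)$ under $T$, and this is built into the partition statement of \lemref{L.partBM} together with the commutativity of diagram \eqref{T.ic-dia}.
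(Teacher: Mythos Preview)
Your proposal is correct and is precisely the unpacking of the argument the paper leaves implicit: the paper simply states that the corollary follows from \corref{C1.icc} and marks it with a box, giving no explicit proof. Your identification of the automorphism locus in $\cald(M)$ with $\cali\calb(M)$ via the partition of \lemref{L.partBM} and diagram~\eqref{T.ic-dia} (equivalently, formula~\eqref{ibm}), together with the observation that $\caln(M)\subset I(M)$ always holds, is exactly the two-line argument the authors had in mind.
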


\begin{Example}\rm 
Let $M$ be a smooth closed connected manifold of dimension $n\geq 5$ such that $\pi=\pi_1(M)$ is
cyclic of order $5$ with generator $t$. Then, $\wh(M)\approx\bbz$ generated by 
$\sigma=(1-t-t^4)\in GL_1(\bbz\pi)$ \cite[Example~6.6]{MilnorWT}. 
We see that $\sigma=\bar\sigma$, so the involution on $\wh(M)$ is trivial. Therefore,
\begin{itemize}
\item if $n$ is odd, $\caln(M)=0$ and then $\cald(M)=\cald_c(M)\approx\bbz$; thus concordance 
implies isotopy for $\bbr$-diffeomorphisms with range $M\times\bbr$;
\item if $n$ is even, then $\cald(M)\approx\bbz$ and $\cald_c(M)\approx\bbz_2$. Thus, 
for diffeomorphisms with range $M\times\bbr$, there are infinitely
many isotopy classes within the same concordance class.
\end{itemize}
\end{Example}

\section{Miscellaneous}\label{S.Rem}

\begin{ccote}\label{fakeRfour}\rm This paper deals with $\bbr$-diffeomorphisms between closed manifolds.
For open manifolds, there is a long story of negative answers to the $\bbr$-simplification problem, 
starting with the earlier example of J.H.C. Whitehead~\cite[p.~827]{WheadMan}. 
There is also the famous {\it Whitehead manifold} which is $\bbr$-diffeomorphic but not homeomorphic to $\bbr^3$ 
(see, e.g. \cite[pp.~61--67]{deRham}).
The most striking example is given by the uncountable family of fake $\bbr^4$'s (see e.g. \cite{GompfMena}),
which are all $\bbr$-diffeomorphic, since there is only one smooth structure on $\bbr^5$ \cite[Corollary~2]{StallingsRn}.
\end{ccote}

\begin{ccote}\label{hinote} Historical note. \rm \
As seen in Sections~\ref{S.icob}--\ref{S.nthree}, Theorem~A of the introduction is equivalent to 
the smooth h-cobordism theorem of Smale \cite{SmalePC} for $n\geq 5$, 
and to the topological h-cobordism theorem of Freedman for $n=4$ \cite{Freedman-Quinn-Book}.
For $n=3$ it is a consequence of Perelman's proof of the Poincar\' e conjecture (see \cite{MorganTian}).
There is no known proof not using these formidable results for which three Field medals were awarded.
Finally, for $n=2$, Theorem~A requires the classification of surfaces, a classical but not trivial result. 
Note that the simplification problem is a geometric form of the problem of recognizing the diffeomorphism type
of a smooth closed manifold by its homotopy type, one of the most important problems 
of algebraic topology, going back to the birth of the subject (see e.g. \cite[\S\,5.1]{HauBook}). 
\end{ccote}

\begin{ccote}\label{kstab} $ \bbr^k$-diffeomorphisms \rm were introduced by B.~Mazur~\cite{Mazur} under the name of {\it $k$-equivalences}.  
Note that a diffeomorphism $f\: M\times\bbr^k\to N\times\bbr^k$ induces a stable tangential homotopy equivalence
(still called $f$) from $M$ to $N$. The \dfn{thickness} of such a stable tangential homotopy equivalence $f$ is the minimal $k$
for which $f$ is induced by an $\bbr^k$-diffeomorphism \cite{KwasikSchultz4}. This thickness is $\leq \dim M + 2$ \cite[Theorem~1]{Mazur}.
For more results, see e.g. \cite{KwasikSchultz4,JaKwaRP,KwaRok}.
\end{ccote}

\begin{ccote}\label{Psimpl} \rm
The $P$-simplification problem has been studied for $P$ a sphere, a torus or a surface.
See e.g. \cite{HiMiRo} for results and several references, and also \remref{R.Sunsimpl}. For more recent results, see e.g. 
\cite{KwaRok, JaKwaRP, KwasikSchultz3, KwaRok2}. 
\end{ccote}

\begin{ccote}\label{cstab}  Stable diffeomorphisms. \rm 
Two closed manifolds $M,N$ of dimension $2n$ are called \dfn{stably diffeomorphic} in the literature if 
$M\,\sharp\,p(S^n\times S^n)\di N \,\sharp\,p(S^n\times S^n)$
for some integer $p$.  Thus Corollary 4.3 and Proposition 5.6 say that $\bbr$-diffeomorphism implies stable diffeomorphism.
The stable diffeomorphism class of a manifold may be detected by cobordisms invariants, as initiated by M.~Kreck~\cite{KreckSD}. 
For recent results and many references, see~\cite{KMPT}.
\end{ccote}

\begin{ccote}\label{spaceforms} Generalized spherical spaceforms. \ \rm
A manifold is a \dfn{generalized spherical spaceform} if its universal covering is a homotopy sphere.
Let $M$ and $N$ be diffeomorphic generalized spherical spaceforms of dimension $\geq 5$. Then Kwasik and Schultz
have proved that any h-cobordism between $M$ and $N$ is trivial \cite{KwasikSchultz}.
This implies that $I(M)=0$ and, thus, $\bbr$-diffeomorphism implies diffeomorphism.
\end{ccote}

\begin{ccote}\rm
In general relativity, the $\bbr$-simplification problem has natural applications to  the classification 
of Cauchy surfaces in globally hyperbolic spacetimes. (See \cite{Torres} for results and references).
\end{ccote}

\footnotesize

\sk{5}\noindent
\begin{minipage}[t]{6cm}
Jean-Claude HAUSMANN\\
Department of Mathematics\\
University of Geneva\\
CH-1211 Geneva 4,
Switzerland\\
Jean-Claude.hausmann{@}unige.ch
\end{minipage}
\hfill
\begin{minipage}[t]{4cm}
Bj\o rn JAHREN\\
Department of Mathematics\\
University of Oslo\\
0316 Oslo,
Norway\\
bjoernj@math.uio.no
\end{minipage}


\begin{thebibliography}{10}

\bibitem{Bak}
A.~Bak.
\newblock The involution on {W}hitehead torsion.
\newblock {\em General Topology and Appl.}, 7(2):201--206, 1977.
\newblock NSF-CBMS Regional Conference in Topology (Univ. Tennessee, Knoxville,
  Tenn., 1974).

\bibitem{BaLR}
A.~Bartels, W.~L{\"u}ck, and H.~Reich.
\newblock On the {F}arrell-{J}ones conjecture and its applications.
\newblock {\em {\sf arXiv:math/0703548}}.

\bibitem{Bass}
H.~Bass.
\newblock The stable structure of quite general linear groups.
\newblock {\em Bull. Amer. Math. Soc.}, 70:429--433, 1964.

\bibitem{BorelSerreDouady}
A.~Borel and J.-P. Serre.
\newblock Corners and arithmetic groups.
\newblock {\em Comment. Math. Helv.}, 48:436--491, 1973.
\newblock Avec un appendice: Arrondissement des vari\'et\'es \`a coins, par A.
  Douady et L. H\'erault.

\bibitem{BrownBook}
K.~S. Brown.
\newblock {\em Cohomology of groups}, volume~87 of {\em Graduate Texts in
  Mathematics}.
\newblock Springer-Verlag, New York-Berlin, 1982.

\bibitem{CaSII}
S.~Cappell and J.~Shaneson.
\newblock On four-dimensional {$s$}-cobordisms. {II}.
\newblock {\em Comment. Math. Helv.}, 64(2):338--347, 1989.

\bibitem{Cerf}
J.~Cerf.
\newblock {\em Sur les diff\'eomorphismes de la sph\`ere de dimension trois
  {$(\Gamma _{4}=0)$}}.
\newblock Lecture Notes in Mathematics, No. 53. Springer-Verlag, Berlin-New
  York, 1968.

\bibitem{charlap}
L.~S. Charlap.
\newblock Compact flat riemannian manifolds. {I}.
\newblock {\em Ann. of Math. (2)}, 81:15--30, 1965.
  
\bibitem{CohenBook}
M.~M.~Cohen.
\newblock {\em A course in simple-homotopy theory}.
\newblock Springer-Verlag, New York-Berlin, 1973.
\newblock Graduate Texts in Mathematics, Vol. 10.

\bibitem{ConRay}
P.~Conner and F.~Raymond.
\newblock Derived actions. In {\em Proc. 2nd Conf. on Compact 
Transformation Groups} (Amherst, MA,1971), Part II, pages 237--310. 
\newblock Lecture Notes in Mathematics, No. 299.
Springer Verlag, Berlin-New York, 1972.

\bibitem{deRham}
G.~de~Rham.
\newblock {\em Lectures on introduction to algebraic topology}.
\newblock Tata Institute of Fundamental Research, Bombay, 1969.
\newblock Notes by V. J. Lal, Tata Institute of Fundamental Research Lectures
  on Mathematics, No. 44.

\bibitem{Farrell-Hsiang}
F.~T. Farrell and W.~C. Hsiang.
\newblock The {W}hitehead group of poly-(finite or cyclic) groups.
\newblock {\em J. London Math. Soc. (2)}, 24(2):308--324, 1981.

\bibitem{FinStern}
R.~Fintushel and R.~Stern.
\newblock Six lectures on four 4-manifolds.
\newblock In {\em Low dimensional topology}, volume~15 of {\em IAS/Park City
  Math. Ser.}, pages 265--315. Amer. Math. Soc., Providence, RI, 2009.

\bibitem{Freedman}
M.~H. Freedman.
\newblock The topology of four-dimensional manifolds.
\newblock {\em J. Differential Geom.}, 17(3):357--453, 1982.

\bibitem{Freedman-Quinn-Book}
M.~H. Freedman and F.~Quinn.
\newblock {\em Topology of 4-manifolds}, volume~39 of {\em Princeton
  Mathematical Series}.
\newblock Princeton University Press, Princeton, NJ, 1990.

\bibitem{FreedTeich}
M.~H. Freedman and P.~Teichner.
\newblock {$4$}-manifold topology. {I}. {S}ubexponential groups.
\newblock {\em Invent. Math.}, 122(3):509--529, 1995.

\bibitem{GompfMena}
R.~Gompf.
\newblock An exotic menagerie.
\newblock {\em J. Differential Geom.}, 37(1):199--223, 1993.

\bibitem{Hatcher-Lawson}
A.~Hatcher and T.~Lawson.
\newblock Stability theorems for ``concordance implies isotopy'' and
  ``{$h$}-cobordism implies diffeomorphism''.
\newblock {\em Duke Math. J.}, 43(3):555--560, 1976.

\bibitem{Hau1}
J-C. Hausmann.
\newblock {$h$}-cobordismes entre vari\'et\'es hom\'eomorphes.
\newblock {\em Comment. Math. Helv.}, 50:9--13, 1975.

\bibitem{Hau2}
J-C. Hausmann.
\newblock Open books and {$h$}-cobordisms.
\newblock {\em Comment. Math. Helv.}, 55(3):330--346, 1980.

\bibitem{HauBook}
J-C. Hausmann.
\newblock {\em Mod two homology and cohomology}.
\newblock Universitext. Springer, Cham, 2014.

\bibitem{HV}
J-C Hausmann and P.~Vogel.
\newblock The plus construction and lifting maps from manifolds.
\newblock In {\em Algebraic and geometric topology ({P}roc. {S}ympos. {P}ure
  {M}ath., {S}tanford {U}niv., {S}tanford, {C}alif., 1976), {P}art 1}, Proc.
  Sympos. Pure Math., XXXII, pages 67--76. Amer. Math. Soc., Providence, R.I.,
  1978.

\bibitem{Heil}
W.~Heil.
\newblock On {$P^{2}$}-irreducible {$3$}-manifolds.
\newblock {\em Bull. Amer. Math. Soc.}, 75:772--775, 1969.

\bibitem{Hendriks}
H.~Hendriks.
\newblock Une obstruction pour scinder une \'equivalence d'homotopie en
  dimension {$3$}.
\newblock {\em Ann. Sci. \'Ecole Norm. Sup. (4)}, 9(3):437--467, 1976.

\bibitem{HenLau}
H.~Hendriks and F.~Laudenbach.
\newblock Scindement d'une \'equivalence d'homotopie en dimension {$3$}.
\newblock {\em Ann. Sci. \'Ecole Norm. Sup. (4)}, 7:203--217 (1975), 1974.

\bibitem{HiMiRo}
P.~Hilton, G.~Mislin, and J.~Roitberg.
\newblock Sphere bundles over spheres and non-cancellation phenomena.
\newblock pages 34--46. Lecture Notes in Math., Vol. 249, 1971.

\bibitem{Hirsch}
M.~W. Hirsch.
\newblock {\em Differential topology}.
\newblock Springer-Verlag, New York, 1976.
\newblock Graduate Texts in Mathematics, No. 33.

\bibitem{Hudson}
J.~F.~P. Hudson.
\newblock Concordance, isotopy, and diffeotopy.
\newblock {\em Ann. of Math. (2)}, 91:425--448, 1970.

\bibitem{JaKwaRP}
B.~Jahren and S.~Kwasik.
\newblock Manifolds homotopy equivalent to {$\bold R{\rm P}^4\#\bold R{\rm
  P}^4$}.
\newblock {\em Math. Proc. Cambridge Philos. Soc.}, 140(2):245--252, 2006.

\bibitem{JaKwa}
B.~Jahren and S.~Kwasik.
\newblock How different can {$h$}-cobordant manifolds be?
\newblock {\em Bull. Lond. Math. Soc.}, 47(4):617--630, 2015.

\bibitem{JaKwaIN}
B.~Jahren and S.~Kwasik.
\newblock Inertial $h$-cobordisms.
\newblock {\sf arXiv:1711.04546}

\bibitem{KMPT}
M.~{Powell} D.~{Kasprowski}, M.~{Land} and P.~{Teichner}.
\newblock {Stable classification of 4-manifolds with 3-manifold fundamental
  groups.}
\newblock {\em {J. Topol.}}, 10(3):827--881, 2017.

\bibitem{KeBMS}
M.~Kervaire.
\newblock Le th\'eor\`eme de {B}arden-{M}azur-{S}tallings.
\newblock {\em Comment. Math. Helv.}, 40:31--42, 1965.

\bibitem{Kreck1}
M.~Kreck.
\newblock {$h$}-cobordisms between 1-connected 4-manifolds.
\newblock {\em Geom. Topol.}, 5:1--6, 2001.

\bibitem{KreckSD}
M.~Kreck.
\newblock Surgery and duality.
\newblock {\em Ann. of Math. (2)}, 149(3):707--754, 1999.

\bibitem{Kwasik1}
S.~Kwasik.
\newblock On low-dimensional {$s$}-cobordisms.
\newblock {\em Comment. Math. Helv.}, 61(3):415--428, 1986.

\bibitem{KwaRok}
S.~Kwasik and W.~Rosicki.
\newblock On stability of 3-manifolds.
\newblock {\em Fund. Math.}, 182(2):169--180, 2004.

\bibitem{KwaRok2}
S.~Kwasik and W.~Rosicki.
\newblock Cartesian product stabilization of 3-manifolds.
\newblock {\em Topology Appl.}, 157(15):2342--2346, 2010.

\bibitem{KwasikSchultz2}
S.~Kwasik and R.~Schultz.
\newblock Vanishing of {W}hitehead torsion in dimension four.
\newblock {\em Topology}, 31(4):735--756, 1992.

\bibitem{KwasikSchultz}
S.~Kwasik and R.~Schultz.
\newblock On {$h$}-cobordisms of spherical space forms.
\newblock {\em Proc. Amer. Math. Soc.}, 127(5):1525--1532, 1999.

\bibitem{KwasikSchultz3}
S.~Kwasik and R.~Schultz.
\newblock Toral and exponential stabilization for homotopy spherical
space forms. 
\newblock {\em Math. Proc. Camb. Phil. Soc.}, 137:571--593, 2004

\bibitem{KwasikSchultz4}
S.~Kwasik and R.~Schultz.
\newblock Tangential thickness of manifolds.
\newblock {\em Proc. Lond. Math. Soc. (3)}, 110(5):1281--1313, 2015.

\bibitem{Latiolais}
M.~Latiolais.
\newblock Simple homotopy type of finite {$2$}-complexes with finite abelian
fundamental group.
\newblock {\em Trans. Amer. Math. Soc.}, 293(2):655--662, 1986.

\bibitem{Lawson1}
T.~Lawson.
\newblock Inertial {$h$}-cobordisms with finite cyclic fundamental group.
\newblock {\em Proc. Amer. Math. Soc.}, 44:492--496, 1974.

\bibitem{Lawson2}
T.~Lawson.
\newblock {$h$}-cobordisms between simply connected {$4$}-manifolds.
\newblock {\em Topology Appl.}, 28(1):75--82, 1988.

\bibitem{LingW}
W.~Ling.
\newblock Translations on {$M\times {\bf R}$}.
\newblock In {\em Algebraic and geometric topology ({P}roc. {S}ympos. {P}ure
  {M}ath., {S}tanford {U}niv., {S}tanford, {C}alif., 1976), {P}art 2}, Proc.
  Sympos. Pure Math., XXXII, pages 167--180. Amer. Math. Soc., Providence,
  R.I., 1978.

\bibitem{LueckReich}
W.~L{\"u}ck and H.~Reich.
\newblock The {B}aum-{C}onnes and the {F}arrell-{J}ones conjectures in {$K$}-
  and {$L$}-theory.
\newblock In {\em Handbook of {$K$}-theory. {V}ol. 1, 2}, pages 703--842.
  Springer, Berlin, 2005.

\bibitem{Mazur}
B.~Mazur.
\newblock Stable equivalence of differentiable manifolds.
\newblock {\em Bull. Amer. Math. Soc.}, 67:377--384, 1961.

\bibitem{MilnorTwoCom}
J.~Milnor.
\newblock Two complexes which are homeomorphic but combinatorially distinct.
\newblock {\em Ann. of Math. (2)}, 74:575--590, 1961.

\bibitem{MilnorHCob}
J.~Milnor.
\newblock {\em Lectures on the {$h$}-cobordism theorem}.
\newblock Notes by L. Siebenmann and J. Sondow. Princeton University Press,
  Princeton, N.J., 1965.

\bibitem{MilnorWT}
J.~Milnor.
\newblock Whitehead torsion.
\newblock {\em Bull. Amer. Math. Soc.}, 72:358--426, 1966.

\bibitem{MorganTian}
J.~Morgan and G.~Tian.
\newblock {\em Ricci flow and the {P}oincar\'e conjecture}, volume~3 of {\em
  Clay Mathematics Monographs}.
\newblock American Mathematical Society, Providence, RI; Clay Mathematics
  Institute, Cambridge, MA, 2007.

\bibitem{MorganTian2}
J.~Morgan and G.~Tian.
\newblock {\em The geometrization conjecture}, volume~5 of {\em Clay
  Mathematics Monographs}.
\newblock American Mathematical Society, Providence, RI; Clay Mathematics
  Institute, Cambridge, MA, 2014.

\bibitem{munkres}
J.~Munkres.
\newblock Obstructions to the smoothing of piecewise-differentiable homeomorphisms.
\newblock {\em Ann. of Math. (2)}, 72:521--554, 1960.

\bibitem{Per} G.~Perelman.
\newblock (1) The entropy formula for the Ricci flow and its geometric applications
\newblock {\sf arXiv:math/0211159} 
\newblock (2) Ricci flow with surgery on three-manifolds
\newblock {\sf arXiv:math/0303109} 
\newblock (3) Finite extinction time for the solutions to the Ricci flow on certain three-manifolds
\newblock {\sf arXiv:math/0307245}

\bibitem{RouSan}
C.~Rourke and B.~Sanderson.
\newblock {\em Introduction to piecewise-linear topology}.
\newblock Springer Study Edition. Springer-Verlag, Berlin-New York, 1982.

\bibitem{Rudyak}
Y.~Rudyak.
\newblock {\em Piecewise linear structures on topological manifolds}.
\newblock World Scientific Publishing Co. Pte. Ltd., Hackensack, NJ, 2016.

\bibitem{SmalePC}
S.~Smale.
\newblock Generalized {P}oincar\'e's conjecture in dimensions greater than
  four.
\newblock {\em Ann. of Math. (2)}, 74:391--406, 1961.

\bibitem{StallingsFib}
J.~Stallings.
\newblock On fibering certain {$3$}-manifolds.
\newblock In {\em Topology of 3-manifolds and related topics ({P}roc. {T}he
  {U}niv. of {G}eorgia {I}nstitute, 1961)}, pages 95--100. Prentice-Hall,
  Englewood Cliffs, N.J., 1962.

\bibitem{StallingsRn}
J.~Stallings.
\newblock The piecewise-linear structure of {E}uclidean space.
\newblock {\em Proc. Cambridge Philos. Soc.}, 58:481--488, 1962.

\bibitem{Stallings}
J.~Stallings.
\newblock On infinite processes leading to differentiability in the complement
  of a point.
\newblock In {\em Differential and {C}ombinatorial {T}opology ({A} {S}ymposium
  in {H}onor of {M}arston {M}orse)}, pages 245--254. Princeton Univ. Press,
  Princeton, N.J., 1965.
  
\bibitem{Torres}
R.~Torres.
\newblock Cauchy surfaces and diffeomorphism types of globally hyperbolic
  spacetimes.
\newblock {\em Classical Quantum Gravity}, 31(17):175006, 8, 2014.

\bibitem{Turaev2}
V.~G. Turaev.
\newblock Towards the topological classification of geometric {$3$}-manifolds.
\newblock In {\em Topology and geometry---{R}ohlin {S}eminar}, volume 1346 of
{\em Lecture Notes in Math.}, pages 291--323. Springer, Berlin, 1988.
   
\bibitem{Turaev}
V.~G. Turaev.
\newblock Homeomorphisms of geometric three-dimensional manifolds.
\newblock {\em Math. Notes}, 43(4):307--312, 1988.

\bibitem{Vas}
L.~N. Vaser{\v{s}}te{\u\i}n.
\newblock On the stabilization of the general linear group over a ring.
\newblock {\em Math. USSR-Sb.}, 8:383--400, 1969.

\bibitem{WallDiffeo4mfd}
C.~T.~C. Wall.
\newblock Diffeomorphisms of {$4$}-manifolds.
\newblock {\em J. London Math. Soc.}, 39:131--140, 1964.

\bibitem{Wall4fourMfd}
C.~T.~C. Wall.
\newblock On simply-connected {$4$}-manifolds.
\newblock {\em J. London Math. Soc.}, 39:141--149, 1964.

\bibitem{wallGeoConn}
C.~T.~C. Wall.
\newblock Geometrical connectivity. {I}.
\newblock {\em J. London Math. Soc. (2)}, 3:597--604, 1971.

\bibitem{Wehrfritz}
B.~A.~F. Wehrfritz.
\newblock {\em Group and ring theoretic properties of polycyclic groups},
  volume~10 of {\em Algebra and Applications}.
\newblock Springer-Verlag London, Ltd., London, 2009.

\bibitem{WW1}
M.~Weiss and B.~Williams.
\newblock Automorphisms of manifolds and algebraic {$K$}-theory. {I}.
\newblock {\em $K$-Theory}, 1(6):575--626, 1988.

\bibitem{WheadTri}
J.~H.~C. Whitehead.
\newblock On {$C\sp 1$}-complexes.
\newblock {\em Ann. of Math. (2)}, 41:809--824, 1940.

\bibitem{WheadMan}
J.~H.~C. Whitehead.
\newblock On the homotopy type of manifolds.
\newblock {\em Ann. of Math. (2)}, 41:825--832, 1940.


\end{thebibliography}
\end{document}